\newcommand{\textcyr}[1]{%
 {\fontencoding{OT2}\fontfamily{wncyr}\fontseries{m}\fontshape{n}\selectfont #1}}
\newcommand{\Sha}{{\mbox{\textcyr{Sh}}}}
\newcommand{\defi}[1]{\textsf{#1}} 
\def\act#1#2%
\newcommand{\Z}{{\mathbb Z}}
\newcommand{\N}{{\mathbb N}}
\newcommand{\Q}{{\mathbb Q}}
\newcommand{\F}{{\mathbb F}}
\newcommand{\A}{{\mathbb A}}
\newcommand{\PP}{{\mathbb P}}
\newcommand{\G}{{\mathbb G}}
\newcommand{\Kbar}{{\overline{K}}}
\newcommand{\Cbar}{{\overline{C}}}
\newcommand{\kbar}{{\overline{k}}}
\newcommand{\Xbar}{{\overline{X}}}
\newcommand{\Jbar}{{\overline{J}}}
\newcommand{\al}{{\alpha}}
\newcommand{\calC}{{\mathcal C}}
\newcommand{\calO}{{\mathcal O}}
\newcommand{\OO}{{\mathcal O}}
\DeclareMathOperator{\Gal}{Gal}
\DeclareMathOperator{\Br}{Br}
\DeclareMathOperator{\Pic}{Pic}
\DeclareMathOperator{\Alb}{Alb}
\DeclareMathOperator{\Jac}{Jac}
\DeclareMathOperator{\HH}{H}
\DeclareMathOperator{\Spec}{Spec}
\DeclareMathOperator{\GL}{GL}
\DeclareMathOperator{\PS}{PS}
\DeclareMathOperator{\Char}{char}
\newcommand{\kk}{\mathbf{k}}
\newcommand{\pr}{\operatorname{pr}}
\newtheorem{Theorem}{Theorem}[section]
\newtheorem{Lemma}[Theorem]{Lemma}
\newtheorem{Proposition}[Theorem]{Proposition}
\newtheorem{Corollary}[Theorem]{Corollary}
\newtheorem{Definition}[Theorem]{Definition}
\newtheorem{Example}[Theorem]{Example}
\newtheorem{Remark}[Theorem]{Remark}
\newtheorem{Remarks}[Theorem]{Remarks}
\newtheorem{Conjecture}[Theorem]{Conjecture}
\numberwithin{equation}{section}
\begin{document}

\title{The $d$-primary Brauer-Manin obstruction for curves}

\author{Brendan Creutz}
\address{School of Mathematics and Statistics, University of Canterbury, Private Bag 4800, Christchurch 8140, New Zealand}
\email{brendan.creutz@canterbury.ac.nz}
\urladdr{http://www.math.canterbury.ac.nz/\~{}b.creutz}

\author{Bianca Viray}
\address{University of Washington, Department of Mathematics, Box 354350, Seattle, WA 98195, USA}
\email{bviray@math.washington.edu}
\urladdr{http://math.washington.edu/\~{}bviray}

\author{Jos\'e Felipe Voloch}
\address{School of Mathematics and Statistics, University of Canterbury, Private Bag 4800, Christchurch 8140, New Zealand}
\email{felipe.voloch@canterbury.ac.nz}
\urladdr{http://www.math.canterbury.ac.nz/\~{}f.voloch}

\thanks{The second author was partially supported by NSF CAREER grant DMS-1553459 and a University of Canterbury Visiting Erskine Fellowship. The third author
acknowledges support from the Simons Foundation via the Collaboration Grant \#234591.
}

\begin{abstract}
For a curve over a global field we consider for which integers $d$ the $d$-primary part of the Brauer group can obstruct the existence of rational points. We give examples showing it is possible that there is a $d$-primary obstruction for infinitely many relatively coprime $d$, and also an example where the odd part of the Brauer group does not obstruct although there is a Brauer-Manin obstruction. These examples demonstrate that a slightly stronger form of a conjecture of Poonen is false, despite being supported by the same heuristic.
\end{abstract}

\maketitle
%

\section{Introduction}
  Let $k$ be a global field, $\A_k$ its ring of adeles and let
$X$ be a smooth projective and geometrically integral variety over $k$.  
Manin observed that any adelic point  $(P_v) \in X(\A_k)$ that is approximated by a $k$-rational point must satisfy relations imposed by elements of $\Br X := \HH^2_\text{\'et}(X, \G_m)$, the Brauer group of $X$~\cite{Manin1971}. {More precisely, }{there is a pairing $X(\A_k) \times \Br X \to \Q/\Z$ such that, for each element $\alpha \in \Br X$, the set $X(\A_k)^{\alpha}$ of adelic points pairing trivially with $\alpha$ is closed and contains the $k$-rational points of $X$. } In particular, for any subset $B \subset \Br X$ the implication
    \begin{equation}\label{eq:Brset}
        X(\A_k)^{B} := \bigcap_{\alpha\in B}X(\A_k)^{\alpha} = \emptyset
        \quad \Longrightarrow \quad
        X(k) = \emptyset.
    \end{equation}
	always holds. When $X(\A_k)^{\Br} = \emptyset$ one says there is a Brauer-Manin obstruction to the existence of rational points. {Here and below $X(\A_k)^{\Br}$ is used to denote $X(\A_k)^{\Br X}$.}

It is an outstanding open question, first raised by Skorobogatov \cite[p. 133]{TorsorsAndRationalPoints} in the number field case, whether this is the only obstruction for $X = C$ a curve. Poonen~\cite{Poonen} and Stoll~\cite{Stoll07} have conjectured an affirmative answer to this question. This has been proved for curves over function fields,
under additional but quite general hypotheses \cite{PoonenVoloch}. 

Computation of the set $C(\A_k)^{\Br}$ is quite difficult, in part because the Brauer group of a curve over a global field is quite large. Modulo constant algebras there are infinitely many elements of order $n$ for every integer $n$. However, in any case where $C(\A_k)^{\Br} = \emptyset$, there is an integer $d$ such that $C(\A_k)^{\Br C[d]} = \emptyset$, where $\Br C[d]$ denotes the $d$-torsion subgroup. In fact, more is true. A simple compactness argument shows that, if $C(\A_k)^{\Br} = \emptyset$, then there exists a finite subgroup $B$ of $\Br C$ with $C(\A_k)^{B} = \emptyset$ (see, e.g.,~\cite[Lemma 4.8]{CV17}). In this paper, we investigate what one can determine about such a subgroup \textit{a priori} and, in particular, which integers $d$ allow for a \defi{$d$-primary Brauer-Manin obstruction}, i.e., there is such a subgroup $B \subset \Br C$ consisting of {elements that are annihilated by some power of $d$}.  Knowledge of such subgroups can facilitate computation Brauer-Manin obstructions in practice and may shed light on the nature of the Brauer-Manin obstruction itself.  In addition, if the conjecture is false, having such an a-priori hold on a subgroup $B$ opens the possibility of producing an explicit counterexample.

If we assume finiteness of Tate-Shafarevich groups of abelian varieties, 
then it follows from Manin's arguments in~\cite{Manin1971} 
that any Brauer-Manin obstruction on a torsor under an abelian variety can be detected from the $P$-primary subgroup of the Brauer group, where $P$ is the period of the torsor in question {(i.e. its order in the Weil-Ch\^atelet group)}.  A stronger, unconditional statement along these lines is proved in~\cite{CV17}: if there is an $d$-primary Brauer-Manin obstruction for some integer $d$, then there is a $\gcd(d,P)$-primary Brauer-Manin obstruction. Similar results are achieved in the case of Kummer varieties~\cites{CV17, SZ16}, bielliptic surfaces (assuming finiteness of certain Tate-Shafarevich groups)~\cite{CV17}, and fibrations of torsors under abelian varieties over projective space~\cite{Nakahara} as well as minimal del Pezzo surfaces (by combining~\cite{Corn07,Nakahara,SD-BrauerCubic}; see~\cite{CV17}). Note that all cases considered thus far have non-maximal Kodaira dimension.

In this paper, we turn our attention to curves of genus at least $2$, the simplest examples of varieties of maximal Kodaira dimension. Our results show that the situation here is quite different. We prove that there exist curves with a $d$-primary Brauer-Manin obstruction for infinitely many pairwise coprime integers $d$ (Theorem~\ref{thm:GivenEInfinitelyManyd}). Given the well known connection between abelian descent and the Brauer-Manin obstruction, we even expect there to be locally soluble curves of general type with a $p$-primary Brauer-Manin obstructions for every prime $p$. We show, however, that this is not the case for {all} curves {without rational points}. For any prime $p$ we construct a pointless curve of general type with no $p$-primary Brauer-Manin obstruction (Theorem~\ref{thm:Brell}). We {also} construct a pointless genus $3$ curve $C$ {with a zero-cycle of degree $1$} over $\Q$ for which there is no odd Brauer-Manin obstruction (Theorem~\ref{thm:Brodd}). This construction yields an infinite tower of non-isomorphic \'etale coverings of $X$ in which all covers are counterexamples to the Hasse principle (Remark~\ref{rem:towers}). {Assuming finiteness of Tate-Shafarevich groups of elliptic curves, such towers cannot exist above genus $1$ curves.}

These examples have an interesting interpretation in relation to Poonen's heuristic~\cite{Poonen} for rational points on curves, one of the key pieces of evidence in support of (a strong form of) the conjecture that Brauer-Manin is the only obstruction for curves. Poonen shows that if, for all places $v$ of $k$ of good reduction for the Jacobian $J$ of $C$, the set of 
rational points of $C$ over the residue field $\F_v$ viewed inside the group $J(\F_v)$ behaves like a random subset of the same cardinality and the order of $J(\F_v)$ is as smooth as a typical integer of the same size, then the conjecture holds 
with probability one. We show that the implication of Poonen's heuristic goes through if, instead of $J(\F_v)$, we take the quotient of $J(\F_v)$ by its $d$-primary subgroup for an integer $d$ (Theorem~\ref{thm:primetodHeuristic}). However, we show that the conclusion is false in examples such as those mentioned in the paragraph above. This suggests one should be careful putting too much faith in such heuristics.

Constant curves over global function fields are excluded by the hypotheses in \cite{PoonenVoloch} and are known to exhibit somewhat unusual behavior, e.g., the failure of Mordell's conjecture. It is thus unclear in this case whether one should even expect Brauer-Manin to be the only obstruction. In view of this, we have included constant curves in our investigations, producing several relevant examples and general results (e.g., Theorem~\ref{thm:constant} and Proposition~\ref{prop:PSconverse}).

\subsection*{Notation}
A \defi{nice variety} over a field $K$ is a smooth, projective and geometrically integral $K$-variety. We will use \defi{curve} to mean a nice variety of dimension $1$. For a $K$-scheme $X$ we use $\Xbar$ to denote the base change of $X$ to a separable closure $\Kbar$ of $K$ and $\kk(X)$ to denote the sheaf of total quotient rings. Note that for $X$ integral, $\kk(X)$ can be identified with the function field of $X$. We use $\HH^i(X,\bullet)$ to denote flat cohomology groups, and abbreviate $\HH^i(\Spec K,\bullet)$ to $\HH^i(K,\bullet)$. The Brauer group of a scheme $X$ is $\Br X := \HH^2(X,\G_m)$. This agrees with the \'etale cohomology group $\HH^2_\text{\'et}(X,\G_m)$ as defined above \cite[Theorem III.3.9]{Milne}. We use $\Br_1 X$ to denote the kernel of the map $\Br X \to \Br \Xbar$ induced by base change. 

Throughout the paper $k$ denotes a global field. The set of places of $k$ is denoted $\Omega_k$. For $v \in \Omega_k$, the corresponding completion is $k_v$ and, when $v$ is finite, the residue field is $\F_v$. We use $\A_k$ to denote the ring of adeles of $k$. For $X$ a curve (resp. an abelian variety) over $k$ we use $X_{\F_v}$ to denote the special fiber of the minimal proper regular model (resp. N\'eron model) of $X\times_k k_v$. When $X_{\F_v}$ is a nice $\F_v$-variety we often abuse notation by writing $X(\F_v)$ in place of $X_{\F_v}(\F_v)$.

For an integer $d$ and an abelian group $G$ (or group scheme), let $G[d^\infty]$ and $G[d^\perp]$ denote, respectively, the $d$-primary and prime-to-$d$ torsion subgroups. Recall that a \defi{supernatural number} is a formal product $\prod p^{n_p}$ ranging over all prime numbers $p$. We extend the notation $G[d^\infty]$ and $G[d^\perp]$ to supernatural numbers $d$ in the obvious way. Namely $G[d^\infty] = \bigcup_{d_0 \in \N, d_0 \mid d} G[d_0^\infty]$ and $G[d^\perp] = \bigcap_{d_0 \in \N, d_0 \mid d}G[d_0^\perp]$. For any supernatural $d$ and any torsion abelian group $G$ we have a splitting ${G \simeq G[d^\infty] \times G[d^\perp]}$. We use $\pr_{d^\infty}$ and $\pr_{d^\perp}$ to denote the corresponding projections.

\section*{Acknowledgements}
    This work was heavily based on discussions that took place at University of Canterbury while the second author was a visiting Erskine Fellow and at the Rational Points 2017 workshop.  We thank the Erskine Programme for its support and Michael Stoll for his support as well as for a number of helpful comments.

\section{Preliminaries}

\subsection{Relating the Brauer-Manin obstruction, finite abelian descent, and divisibility on abelian varieties}

	Let $X$ be a nice variety over a global field $k$ with Albanese variety $A := \Alb^0_X$ and Picard variety $\hat{A} = \Pic^0_X$. For an integer $d \ge 2$, we define a \defi{$d$-covering} of $X$ to be an $X$-torsor under $\hat{A}[d]$ whose \defi{type} (cf. \cite[Section 2.3]{TorsorsAndRationalPoints}) is the inclusion $\lambda_d \colon \hat{A}[d](\kbar) \hookrightarrow \hat{A}(\kbar) = \Pic^0_X(\kbar) \to \Pic \Xbar$. We define $X(\A_k)^{d-\text{ab}}$ to be the set of adelic points on $X$ that lift to an adelic point on some $d$-covering of $X$.  {Recall that $X(k) \subset X(\A_k)^{d-\textup{ab}}.$}

	From the Hochschild-Serre spectral sequence we have a map $\Br_1X \to \HH^1(k,\Pic\Xbar)$ which induces an isomorphism $\Br_1X/\Br_0X \simeq \HH^1(k,\Pic\Xbar)$, {since $k$ a global field implies $\HH^3(k,\kbar^\times) = 0$ (cf. \cite[8.3.11(iv) and 8.3.12]{CON}).} The map $\lambda_d$ induces maps
	\begin{equation}\label{eq:lambdastar}
		\HH^1(k,\hat{A}[d]) \to \HH^1(k,\hat{A}) \to \HH^1(k,\Pic\Xbar)\,.
	\end{equation}
	We define $\Br_{1/2,d}X$ and $\Br_{1/2}X$ to be the subgroups of $\Br_1X$ whose images in $\HH^1(k,\Pic\Xbar)$ lie in the images of $\HH^1(k,\hat{A}[d])$ and $\HH^1(k,\hat{A})$ under~\eqref{eq:lambdastar}, respectively. {The notation $\Br_{1/2}X$ was first introduced, in the number field case, in \cite{Stoll07}.} To simplify the notation we write $X(\A_k)^{\Br_{1/2}}$ in place of $X(\A_k)^{\Br_{1/2}X}$ and similarly for $X(\A_k)^{\Br_{1/2,d}}$, etc.

    \begin{Proposition}\label{prop:dcovA}
        Let $A$ be an abelian variety over a global field $k$. Then 
        $A(\A_k)^{\Br_{1/2,d}}= A(\A_k)^{d-\textup{ab}}$\,.
    \end{Proposition}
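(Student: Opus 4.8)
The plan is to identify both sides with the same subset of $A(\A_k)$ by tracing through the definitions and using the standard descent-theoretic description of $X(\A_k)^{f\text{-ab}}$ for $f\colon Y \to X$ a torsor under a finite group scheme. Write $\hat A = \Pic^0_A$ for the dual abelian variety; since $A$ is an abelian variety, its Albanese is $A$ itself and $\Pic \Abar = \NS \Abar \oplus \hat A(\kbar)$, so the map $\lambda_d$ is essentially the inclusion $\hat A[d] \hookrightarrow \hat A$ followed by the identification of $\hat A(\kbar)$ with $\Pic^0 \Abar$. The key external input is the compatibility, for a nice variety $X$ with a torsor $Y \to X$ of type $\lambda$, between the evaluation pairing of $Y$ against $\Br_1 X$ and the descent condition defining $X(\A_k)$ lifting to $Y$; concretely, for the family of all $d$-coverings one has the equality $X(\A_k)^{d\text{-ab}} = X(\A_k)^{B}$ where $B \subset \Br_1 X$ is exactly the subgroup whose image in $\HH^1(k,\Pic\Xbar)$ lies in the image of $\HH^1(k,\hat A[d])$ under \eqref{eq:lambdastar} — this is precisely the definition of $\Br_{1/2,d}X$. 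So the content is to verify that the abstract descent machinery, specialized to $X = A$ and the full collection of $d$-coverings, yields the subgroup $\Br_{1/2,d}A$ and no more.

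First I would set up the descent-theoretic side: by \cite[Theorem 6.1.2]{TorsorsAndRationalPoints} (the ``descent obstruction vs. Brauer obstruction'' comparison for torsors under a group of multiplicative type, or rather its finite-abelian-group analogue), an adelic point $(P_v) \in A(\A_k)$ lifts to an adelic point on some twist of a given $d$-covering $Y \to A$ if and only if $(P_v)$ is orthogonal to the image of the type map applied to $\HH^1(k,\hat A[d])$ inside $\Br_1 A$; taking the union over all $d$-coverings and all their twists gives orthogonality to the whole subgroup $\Br_{1/2,d}A$. This shows $A(\A_k)^{d\text{-ab}} \subseteq A(\A_k)^{\Br_{1/2,d}}$ — in fact I would aim for equality directly from this comparison, since every class in $\Br_{1/2,d}A$ arises as (a translate of) the type-image of some class in $\HH^1(k,\hat A[d])$, and the descent condition attached to that class is exactly the Brauer condition it defines. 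The one subtlety is that different lifts of $(P_v)$ to the various $d$-coverings must be reconciled; this is handled by the fact that the $d$-coverings of $A$ are torsors under $\hat A[d]$ and are themselves permuted simply transitively by $\HH^1(k,\hat A[d])$, so the union over the torsor and its twists covers all the relevant classes simultaneously.

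For the reverse inclusion $A(\A_k)^{\Br_{1/2,d}} \subseteq A(\A_k)^{d\text{-ab}}$ I would argue that if $(P_v)$ pairs trivially with every element of $\Br_{1/2,d}A$, then for each $d$-covering $Y \to A$ the local obstruction classes $\{\text{inv}_v(\text{something})\}$ assemble, via Poitou–Tate / the global reciprocity sequence for $\HH^1(k,\hat A[d])$ and its dual, into a class in $\HH^1(k,\hat A[d])$ that twists $Y$ into a covering with an adelic point lying over $(P_v)$; this is the standard argument (as in \cite[\S5.3, \S6.1]{TorsorsAndRationalPoints}) that the Brauer condition cut out by a finite-abelian type is \emph{equivalent} to the descent condition, not merely implied by it. The main obstacle, and the step deserving the most care, is exactly this equivalence: one must check that for $X = A$ an abelian variety the type map $\HH^1(k,\hat A[d]) \to \HH^1(k,\Pic\Xbar) \cong \Br_1 A/\Br_0 A$ is such that \emph{every} adelic obstruction coming from $\Br_{1/2,d}A$ is captured by \emph{some} $d$-covering — i.e. that we have not lost information by restricting to coverings of this particular type. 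This follows because, by construction, $\Br_{1/2,d}A$ is the preimage under $\Br_1 A \to \HH^1(k,\Pic\Abar)$ of the image of $\HH^1(k,\hat A[d])$, so the coverings of type $\lambda_d$ realize precisely this subgroup and nothing is missed; the constant algebras $\Br_0 A$ act trivially in the pairing, so working modulo $\Br_0$ is harmless. I would close by noting that this is the finite-level analogue of the well-known identity $A(\A_k)^{\Br_{1/2}} = \overline{A(k)}$ (under finiteness of $\Sha$), here made unconditional by restricting to the $d$-part.
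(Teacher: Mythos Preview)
Your approach is essentially the same as the paper's: both reduce to \cite[Theorem~6.1.2]{TorsorsAndRationalPoints}, with one inclusion coming from functoriality of the Brauer pairing (pullback along a $d$-covering kills $\Br_{1/2,d}A/\Br_0A$) and the other from local duality plus Poitou--Tate. The one point you gloss over is that Skorobogatov's theorem is stated only for number fields, whereas the proposition is asserted for arbitrary global $k$; the paper handles this by observing that the three steps of Skorobogatov's proof go through verbatim in positive characteristic once one substitutes \cite[III.6.10]{MilneADT} for local duality and \cite[Theorem~5.1]{Cesnavicius} for Poitou--Tate, and notes that multiplication by $d$ on $A$ already supplies a torsor of type $\lambda_d$ (so the existence step is automatic). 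You should make this explicit rather than relying on a citation that does not literally cover the function-field case.
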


    \begin{proof}
        For $k$ a number field, this is a special case of \cite[Theorem 6.1.2(a)]{TorsorsAndRationalPoints}. This can be extended to global fields of positive characteristic as follows. The proof in the number field case involves three steps, as outlined at the top of page 115 of op. cit. The first step is to show the existence of an $A$-torsor of type $\lambda_d$. In our situation this is guaranteed since multiplication by $d$ endows $A$ with the structure of a $d$-covering. The second step of the proof (beginning on page 119 of op. cit.) is to prove the containment $A(\A_k)^{\Br_{1/2,d}} \subset A(\A_k)^{d-\text{ab}}$, assuming the existence of an $A$-torsor of type $\lambda_d$. This follows from local Tate duality and the Poitou-Tate exact sequence. The proof carries through verbatim in the function field case, provided one uses the local duality statement \cite[III.6.10]{MilneADT} in place of \cite[I.2.3]{MilneADT} and the Poitou-Tate sequence \cite[Theorem 5.1]{Cesnavicius} in place of \cite[I.4.20]{MilneADT}. The third step of the proof is to show the containment  $A(\A_k)^{d-\text{ab}} \subset A(\A_k)^{\Br_{1/2,d}}$. This follows from functoriality of the Brauer pairing and the fact that for any $A$-torsor $\pi:Y \to A$ of type $\lambda_d$ the induced map $\pi^*$ annihilates $\Br_{1/2,d}A/\Br_0A$.
    \end{proof}

    \begin{Corollary}\label{cor:AVdiv}
        Let $A$ be an abelian variety over a global field $k$ and let $(P_v) \in A(\A_k)$ be an adelic point. If there exists $Q \in A(k)$ such that $((P_v)-Q) \in dA(\A_k)$, then $(P_v) \in A(\A_k)^{\Br_{1/2,d}}$. Conversely, if $s$ is a positive integer such that $d^s\Sha(k,A)[d^\infty] = 0$ and $(P_v) \in A(\A_k)^{\Br_{1/2,d^n}}$ with $n -s \ge 0$, then there exists $Q \in A(k)$ such that $((P_v) -Q) \in d^{n-s}A(\A_k)$.
    \end{Corollary}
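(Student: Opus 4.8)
The plan is to derive both assertions from Proposition~\ref{prop:dcovA}, which gives $A(\A_k)^{\Br_{1/2,d^m}}=A(\A_k)^{d^m-\text{ab}}$ for every integer $m\ge 1$, together with the standard description of $d^m$-descent on an abelian variety. Write $\delta_m\colon A(k)\to\HH^1(k,A[d^m])$ and, for each place $v$, $\delta_{m,v}\colon A(k_v)\to\HH^1(k_v,A[d^m])$ for the connecting maps of the Kummer sequence $0\to A[d^m]\to A\xrightarrow{d^m}A\to 0$. The $d^m$-coverings of $A$ are the twists of $[d^m]\colon A\to A$; they are torsors under $A[d^m]$, classified after this basepoint by $\HH^1(k,A[d^m])$, and $P\in A(k_v)$ lifts over $k_v$ to the covering attached to $\xi$ precisely when $\delta_{m,v}(P)=\res_v(\xi)$. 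Hence $(P_v)\in A(\A_k)^{d^m-\text{ab}}$ if and only if $(\delta_{m,v}(P_v))_v$ lies in the image of the localization map $\HH^1(k,A[d^m])\to\prod_v\HH^1(k_v,A[d^m])$; in particular $A(\A_k)^{d^m-\text{ab}}$ is a subgroup of $A(\A_k)$, being the preimage of a subgroup under a homomorphism.

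For the first assertion it therefore suffices to show $A(k)+dA(\A_k)\subseteq A(\A_k)^{d-\text{ab}}$. We already know $A(k)\subseteq A(\A_k)^{d-\text{ab}}$, and $dA(\A_k)\subseteq A(\A_k)^{d-\text{ab}}$ since every point of $dA(\A_k)$ lifts tautologically, along $[d]\colon A\to A$, to the $d$-covering $[d]\colon A\to A$; being a subgroup, $A(\A_k)^{d-\text{ab}}$ then contains their sum, which by Proposition~\ref{prop:dcovA} is exactly what is asserted.

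For the converse, suppose $(P_v)\in A(\A_k)^{\Br_{1/2,d^n}}=A(\A_k)^{d^n-\text{ab}}$, so there is $\xi\in\HH^1(k,A[d^n])$ with $\res_v(\xi)=\delta_{n,v}(P_v)$ for all $v$. Put $\eta:=[d^s]_*\xi\in\HH^1(k,A[d^{n-s}])$, the image of $\xi$ under the map induced by $[d^s]\colon A[d^n]\to A[d^{n-s}]$. Applying naturality of the associated long exact sequences to the morphism of Kummer sequences $\bigl(0\to A[d^n]\to A\xrightarrow{d^n}A\to 0\bigr)\to\bigl(0\to A[d^{n-s}]\to A\xrightarrow{d^{n-s}}A\to 0\bigr)$, with vertical maps $[d^s]$, $[d^s]$, $\mathrm{id}$, yields $\res_v(\eta)=\delta_{n-s,v}(P_v)$ for all $v$, and the identity $\iota_*\eta=d^s\cdot\iota_*\xi$ in $\HH^1(k,A)$, where $\iota_*$ denotes the map induced by the inclusions $A[d^m]\hookrightarrow A$. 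Now $\res_v(\iota_*\xi)=\iota_{v,*}(\delta_{n,v}(P_v))=0$ at every $v$ (consecutive maps of the Kummer sequence over $k_v$), so $\iota_*\xi\in\Sha(k,A)[d^n]\subseteq\Sha(k,A)[d^\infty]$; the hypothesis $d^s\Sha(k,A)[d^\infty]=0$ then forces $\iota_*\eta=d^s\cdot\iota_*\xi=0$. Hence $\eta\in\ker\iota_*$, which by exactness of the Kummer sequence is the image of $\delta_{n-s}$, so $\eta=\delta_{n-s}(Q)$ for some $Q\in A(k)$. For each $v$ this gives $\delta_{n-s,v}(P_v-Q)=\res_v(\eta)-\delta_{n-s,v}(Q)=0$, i.e.\ $P_v-Q\in d^{n-s}A(k_v)$; since $A$ is proper we have $A(\A_k)=\prod_v A(k_v)$, and this says precisely $((P_v)-Q)\in d^{n-s}A(\A_k)$.

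The only genuinely arithmetic ingredient is the hypothesis $d^s\Sha(k,A)[d^\infty]=0$: it is exactly what converts the identity $\iota_*\eta=d^s\cdot\iota_*\xi$ --- that pushing $\xi$ down to level $n-s$ costs precisely a factor $d^s$ in $\HH^1(k,A)$ --- into $\iota_*\eta=0$, which is what lets $\eta$ descend to a rational point. Everything else is formal bookkeeping with the Kummer sequences; the compatibilities to be checked are $\delta_{n-s,v}=[d^s]_*\circ\delta_{n,v}$ and $\iota_*\circ[d^s]_*=d^s\cdot\iota_*$, together with the translation of the $d^n$-descent condition into the existence of a global class matching the local Kummer images, all of which follow from functoriality of the long exact sequences. (The case $n=s$ is trivial, the conclusion then only asserting $((P_v)-Q)\in A(\A_k)$.)
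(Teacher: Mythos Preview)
Your proof is correct and follows essentially the same route as the paper's: both directions go through Proposition~\ref{prop:dcovA} to translate into $d^m$-descent, and for the converse both push the $d^n$-covering/class down to level $d^{n-s}$, use the hypothesis on $\Sha$ to see that the resulting class is trivial in $\HH^1(k,A)$, and then identify it with the Kummer image of a rational point $Q$. The only difference is packaging: the paper phrases the converse geometrically (factoring the covering $Y\to A$ through an intermediate $d^{n-s}$-covering $Y'$ which is trivial in $\Sha$, hence of the form $x\mapsto d^{n-s}x+Q$), whereas you carry out the equivalent cohomological computation via the morphism of Kummer sequences.
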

    \begin{Remark}\label{rem:Akbar}
	Let $A(\A_k)_{\bullet} = \prod \pi_0(A(k_v))$, where $\pi_0$ denotes the group of connected components. Note that $\pi_0(A(k_v)) = A(k_v)$ if $v$ is not archimedean. It is known that the closure of the image of $A(k)$ in $A(\A_k)_\bullet$ is its profinite completion. From this and the previous lemma it follows that $\overline{A(k)} = A(\A_k)_\bullet^{\Br}$ when $\Sha(k,A)$ is finite. See \cite[Remark 4.5]{PoonenVoloch}.
    \end{Remark}
    \begin{proof}
        If $(P_v) - Q \in dA(\A_k)$, then $(P_v)$ lifts to the $d$-covering $A \ni x \mapsto dx + Q \in A$, so $(P_v) \in A(\A_k)^{d-\textup{ab}}$, which equals $A(\A_k)^{\Br_{1/2,d}}$ by Proposition~\ref{prop:dcovA}. Now suppose $(P_v)$ is as in the second statement of the lemma. By Proposition~\ref{prop:dcovA}, $(P_v)$ survives $d^n$-descent, so there is a $d^n$-covering $\pi\colon Y \to A$ such that $(P_v) = \pi(Q_v)$ for some $(Q_v) \in Y(\A_k)$. The map $\pi$ must factor as $Y \to Y' \to A$ where $Y' \to A$ is a $d^{n-s}$-covering. Moreover, the assumption that $d^s\Sha(k,A) = 0$ implies that the class of $Y'$ in $\Sha(k,A)$ is trivial. Hence there is an isomorphism $Y' \simeq A$ identifying the map $Y' \to A$ with the map $A \to A$ given by $x \mapsto d^{n-s}x + Q$ for some $Q \in A(k)$. In particular, $((P_v) - Q) \in d^{n-s}A(\A_k)$. 
    \end{proof}

    \subsection{Relating the Brauer-Manin obstruction, finite abelian descent, and divisibility on curves}\label{subsec:curves}

 {This section is highly influenced by \cite{Stoll07}. In the number field case the results of this section can be found in or easily deduced from \cite[Sections 6 and 7]{Stoll07}.}
        Suppose $C/k$ is a nice curve of genus at least $1$ over a global field $k$. Let $J := \Jac(C) = \Alb^0_C = \Pic_C^0$ be its Jacobian. Suppose further that there exists a zero cycle $z$ of degree $1$ on $C$ and that this is used to define an embedding $\iota:C \to J$ by the rule $x \mapsto [x-z]$.

        \begin{Lemma}\label{lem:iota}
            The map $\iota$ induces surjective morphisms
            \begin{enumerate}
                \item $\Br J \to \Br C$,
                \item $\Br_{1/2}J \to \Br C$, and 
                \item $\Br_{1/2,d}J \to \Br C[d]$.
            \end{enumerate}
        \end{Lemma}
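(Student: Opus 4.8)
The plan is to reduce all three assertions to a computation with (flat) cohomology of Picard modules, using the Hochschild--Serre description of $\Br_1$ recalled above together with the geometry of $\iota$. Since $\Cbar$ is a curve over the separably closed field $\kbar$, Tsen's theorem gives $\Br\Cbar=0$, so $\Br C=\Br_1C$ and the image of $\iota^*\colon\Br J\to\Br C$ lies in $\Br_1 C$ automatically. The pullback $\iota^*$ is compatible with the exact sequences
\[
0\to\Br_0 Y\to\Br_1 Y\to\HH^1(k,\Pic\Ybar)\to 0,\qquad Y\in\{J,C\},
\]
and on $\Br_0$ it is surjective --- indeed an isomorphism, since $J(k)\neq\emptyset$ and $C$ carries a degree-$1$ zero-cycle, so $\Br k\to\Br J$ and $\Br k\to\Br C$ are injective with images $\Br_0 J$ and $\Br_0 C$. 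Finally, $\iota$ induces the identity map $\Jbar=\Alb^0\Cbar\to\Alb^0\Jbar=\Jbar$, so dually $\iota^*$ restricts to an isomorphism of $G_k$-modules $\Pic^0\Jbar\xrightarrow{\ \sim\ }\Pic^0\Cbar$ (autoduality of the Jacobian); moreover the degree-$1$ zero-cycle yields a $G_k$-fixed class in $\Pic^1\Cbar$, splitting $0\to\Pic^0\Cbar\to\Pic\Cbar\to\NS\Cbar\to 0$ as $\Pic\Cbar\isom\Pic^0\Cbar\oplus\Z$, whence $\HH^1(k,\Pic\Cbar)=\HH^1(k,\Pic^0\Cbar)=\HH^1(k,\Jbar)$ using $\HH^1(k,\Z)=0$.

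For parts (1) and (2), the key observation is that the composite
\[
\HH^1(k,\Pic^0\Jbar)\to\HH^1(k,\Pic\Jbar)\xrightarrow{\ \iota^*\ }\HH^1(k,\Pic\Cbar)=\HH^1(k,\Jbar)
\]
is precisely the isomorphism induced by $\iota^*\colon\Pic^0\Jbar\xrightarrow{\sim}\Pic^0\Cbar$, hence surjective; a fortiori $\iota^*\colon\HH^1(k,\Pic\Jbar)\to\HH^1(k,\Pic\Cbar)$ is surjective, and it remains surjective after restriction to the image of $\HH^1(k,\Pic^0\Jbar)$, which by definition is the image of $\Br_{1/2}J$ in $\HH^1(k,\Pic\Jbar)$. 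Feeding this, together with the surjectivity of $\iota^*$ on $\Br_0$, into a five-lemma argument with the sequences above and with their $\Br_{1/2}$-analogues $0\to\Br_0 J\to\Br_{1/2}J\to\im\bigl(\HH^1(k,\Pic^0\Jbar)\to\HH^1(k,\Pic\Jbar)\bigr)\to 0$ shows that $\iota^*\colon\Br_1 J\to\Br C$ and $\iota^*\colon\Br_{1/2}J\to\Br C$ are both surjective; since $\Br_1 J\subseteq\Br J$ this gives (1) and (2).

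For part (3), I would argue as follows. The image of $\Br_{1/2,d}J$ in $\HH^1(k,\Pic\Jbar)$ is by definition the image of $\HH^1(k,\Pic^0\Jbar[d])$ under~\eqref{eq:lambdastar}, and the computation above carries this isomorphically, via $\iota^*$, onto the image of $\HH^1(k,\Jbar[d])\to\HH^1(k,\Jbar)$; by the Kummer sequence for $\Jbar$ (in flat cohomology, to cover the case $\Char k\mid d$) this image is exactly $\HH^1(k,\Jbar)[d]=\HH^1(k,\Pic\Cbar)[d]$. Now take $\beta\in\Br C[d]$; its image $\bar\beta\in\HH^1(k,\Pic\Cbar)$ is killed by $d$, hence there is $\alpha\in\Br_{1/2,d}J$ whose image in $\HH^1(k,\Pic\Cbar)$ equals $\bar\beta$, so that $\iota^*\alpha-\beta\in\Br_0 C$. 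Subtracting from $\alpha$ the preimage of $\iota^*\alpha-\beta$ under the isomorphism $\iota^*\colon\Br_0 J\to\Br_0 C$, we may arrange $\iota^*\alpha=\beta$. Thus $\Br C[d]\subseteq\iota^*(\Br_{1/2,d}J)$, which is the content of (3).

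The only genuinely geometric input is the identification of $\iota^*\colon\Pic^0\Jbar\to\Pic^0\Cbar$ as an isomorphism (autoduality of the Jacobian, which forces the induced maps on $\HH^1$ to be isomorphisms); everything else is a diagram chase. The one place that requires a little care is the bookkeeping of the constant algebras $\Br_0$ in part (3), where one must pass between $\Br C[d]$ and its image in $\HH^1(k,\Pic\Cbar)$.
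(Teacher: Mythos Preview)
Your proposal is correct and follows essentially the same approach as the paper's proof: both use Tsen's theorem to reduce to $\Br_1 C$, the Hochschild--Serre identification $\Br_1/\Br_0\simeq\HH^1(k,\Pic)$, autoduality of the Jacobian to see that $\iota^*$ is an isomorphism on $\Pic^0$, and the splitting of $\Pic\Cbar$ furnished by the degree-$1$ zero-cycle. Your version is somewhat more explicit about the $\Br_0$ bookkeeping in part~(3) and notes the need for flat cohomology when $\Char k\mid d$, but the argument is the same diagram chase the paper carries out.
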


        \begin{proof}
            The existence of a zero-cycle of degree $1$ implies  that the exact sequence of Galois modules $0 \to \Pic^0\Cbar \to \Pic\Cbar \to \Z \to 0$ is split and so induces an isomorphism $\HH^1(k,\Pic^0\Cbar) \simeq \HH^1(k,\Pic\Cbar)$. Then we have a commutative diagram {(with non-exact rows)},
            \[
                \xymatrix{
                    \HH^1(k,\Pic^0\Jbar)\ar[r]\ar@{=}[d]^{\iota^*} & \HH^1(k,\Pic\Jbar) \ar[r]\ar[d]^{\iota^*} & \Br_1J/\Br_0 J \ar[d]^{\iota^*}\\
                    \HH^1(k,\Pic^0\Cbar) \ar[r] &  \HH^1(k,\Pic\Cbar) \ar[r]& \Br_1 C/\Br_0C 
                }
            \]

            The maps in the bottom row are all isomorphisms, so the vertical maps must all be surjective. By definition $\Br_{1/2}J \subset \Br J$ is the subgroup whose image lies in the image of $\HH^1(k,\Pic^0\Jbar)$. Furthermore, $\Br_1 C = \Br C$ by Tsen's theorem {and the existence of a zero-cycle of degree $1$ yields an isomorphism $\Br_0 J \to \Br_0 C$}, so we have the surjectivity of (2). This shows that (1) is also surjective. Statement (3) follows from the surjectivity of the map $\HH^1(k,J[d]) \to \HH^1(k,J)[d] \simeq (\Br_1C/\Br_0C)[d]$.
        \end{proof}

        Corollary~\ref{cor:AVdiv}, Lemma~\ref{lem:iota} and functoriality of the Brauer pairing yield the following.

        \begin{Corollary}\label{cor:curved-ab}
            For any integer $d$, $C(\A_k)^{d-\text{ab}} = C(\A_k)^{\Br C[d]}$.
        \end{Corollary}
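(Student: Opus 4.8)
The plan is to deduce the corollary directly from Corollary~\ref{cor:AVdiv} and Lemma~\ref{lem:iota} via the embedding $\iota \colon C \to J$, using functoriality of the Brauer pairing together with the fact that $\iota$ is injective on points. First I would record the functoriality statement: for any $k$-scheme morphism $f \colon X \to Y$ and any $\beta \in \Br Y$, one has $\langle f(P_v), \beta\rangle = \langle (P_v), f^*\beta\rangle$ for all $(P_v) \in X(\A_k)$; hence $f^{-1}\bigl(Y(\A_k)^{B}\bigr) \supseteq X(\A_k)^{f^*B}$, with equality of the relevant point sets when $f$ is a locally closed immersion so that $X(\A_k) \hookrightarrow Y(\A_k)$. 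Applying this to $\iota$ and to the subgroup $B = \Br_{1/2,d}J \subseteq \Br J$, and using part~(3) of Lemma~\ref{lem:iota} that $\iota^*$ carries $\Br_{1/2,d}J$ onto $\Br C[d]$, gives the chain of identifications $C(\A_k)^{\Br C[d]} = C(\A_k) \cap \iota^{-1}\bigl(J(\A_k)^{\Br_{1/2,d}}\bigr)$; here I am using that $\iota^*$ is surjective onto $\Br C[d]$ so that cutting out by the image subgroup $\iota^*(\Br_{1/2,d}J)$ is the same as cutting out by all of $\Br C[d]$.

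Next I would treat the abelian-descent side symmetrically. By Proposition~\ref{prop:dcovA}, $J(\A_k)^{\Br_{1/2,d}} = J(\A_k)^{d\text{-ab}}$, so the previous paragraph already identifies $C(\A_k)^{\Br C[d]}$ with those adelic points of $C$ whose images in $J(\A_k)$ survive $d$-descent on $J$. It then remains to show this coincides with $C(\A_k)^{d\text{-ab}}$, i.e. with the adelic points of $C$ that lift to some $d$-covering of $C$. One inclusion is formal: a $d$-covering of $C$ of type $\lambda_d$ pushes forward under $\iota$ (or rather pulls back from) a $d$-covering of $J$, compatibly with the torsor structures under $\hat A[d] = J[d]$, since $\iota^*$ on $\Pic^0$ is the identity (as used in the proof of Lemma~\ref{lem:iota}); so an adelic point of $C$ lifting to a $d$-covering of $C$ has image lifting to the corresponding $d$-covering of $J$. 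For the reverse inclusion, given $(P_v) \in C(\A_k)$ with $\iota(P_v)$ lifting to a $d$-covering $Y \to J$, one pulls $Y$ back along $\iota$ to get a $d$-covering $Y_C := Y \times_J C \to C$ (again a torsor under $J[d]$ of the correct type), and the lift of $\iota(P_v)$ to $Y(\A_k)$ together with $(P_v)$ itself produces, via the fiber-product universal property, a lift of $(P_v)$ to $Y_C(\A_k)$. Assembling these two inclusions yields $C(\A_k)^{d\text{-ab}} = C(\A_k)^{\Br C[d]}$.

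The one genuinely delicate point, and the step I expect to need the most care, is the compatibility of torsor \emph{types} under pullback along $\iota$: one must check that pulling back a $J$-torsor under $J[d]$ of type $\lambda_d^{J}\colon J[d] \hookrightarrow \Pic^0\Jbar \to \Pic\Jbar$ along $\iota$ produces a $C$-torsor under $J[d] = \hat A[d]$ whose type is the prescribed inclusion $\lambda_d^{C}\colon \hat A[d] \hookrightarrow \Pic^0\Cbar \to \Pic\Cbar$, and conversely. This is exactly where the hypothesis that $C$ carries a zero-cycle of degree $1$ enters (it is what makes $\iota$, hence the whole comparison, available and makes $\iota^*\colon \HH^1(k,\Pic^0\Jbar)\to\HH^1(k,\Pic^0\Cbar)$ an isomorphism as in Lemma~\ref{lem:iota}). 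In practice this compatibility is the curve-case analogue of the classical statement that descent via $\iota^*$ identifies $d$-coverings of $C$ with $d$-coverings of $J$, and it is the content of the number-field results of \cite[Sections 6 and 7]{Stoll07}; I would either cite that directly or spell out the type computation on $\HH^1(k,\hat A[d])$ using the commutative diagram already displayed in the proof of Lemma~\ref{lem:iota}. Modulo this identification of coverings, everything else is the formal functoriality bookkeeping sketched above, and the corollary follows.
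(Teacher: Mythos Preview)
Your proposal is correct and follows essentially the same approach as the paper: the paper's proof is the single sentence ``Corollary~\ref{cor:AVdiv}, Lemma~\ref{lem:iota} and functoriality of the Brauer pairing yield the following,'' and your argument is a careful unpacking of exactly those ingredients (using Proposition~\ref{prop:dcovA} directly rather than its consequence Corollary~\ref{cor:AVdiv}, which is harmless since the latter is derived from the former). Your explicit discussion of the type compatibility for $d$-coverings under pullback along $\iota$ is the one step the paper leaves entirely to the reader, and your sketch of it is sound.
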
		

        \begin{Corollary}\label{Cor:BMCurvesBMJac}
            Let $(P_v) \in C(\A_k)$. Then $(P_v) \in C(\A_k)^{\Br}$ if and only if $\iota(P_v) \in J(\A_k)^{\Br}$.
        \end{Corollary}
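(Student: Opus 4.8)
The plan is to deduce this from the surjectivity statement in Lemma~\ref{lem:iota}(1) together with functoriality of the Brauer-Manin pairing. Recall that for a morphism $\iota\colon C \to J$ and a class $\beta \in \Br J$, the pairing satisfies $\langle P_v, \iota^*\beta\rangle = \langle \iota(P_v), \beta\rangle$ for every $v$, hence $\iota(P_v) \in J(\A_k)^{\beta}$ if and only if $(P_v) \in C(\A_k)^{\iota^*\beta}$. Summing over all $\beta \in \Br J$, this gives the containment $\iota^{-1}\bigl(J(\A_k)^{\Br}\bigr) \subseteq C(\A_k)^{\iota^*(\Br J)}$, and in fact equality: if $(P_v) \in C(\A_k)^{\iota^*(\Br J)}$ then $\iota(P_v)$ pairs trivially with every element of $\Br J$, i.e. $\iota(P_v) \in J(\A_k)^{\Br}$.

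The remaining point is to upgrade $C(\A_k)^{\iota^*(\Br J)}$ to $C(\A_k)^{\Br C}$. By Lemma~\ref{lem:iota}(1) the map $\iota^*\colon \Br J \to \Br C$ is surjective, so $\iota^*(\Br J) = \Br C$ as subsets of $\Br C$; therefore $C(\A_k)^{\iota^*(\Br J)} = C(\A_k)^{\Br C} = C(\A_k)^{\Br}$. Combining with the previous paragraph,
\[
    (P_v) \in C(\A_k)^{\Br}
    \iff (P_v) \in C(\A_k)^{\iota^*(\Br J)}
    \iff \iota(P_v) \in J(\A_k)^{\Br},
\]
which is exactly the claimed equivalence.

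There is essentially no obstacle here: the two ingredients — functoriality of the pairing under pullback and the surjectivity of $\iota^*$ on Brauer groups — are both already in hand, the latter being precisely Lemma~\ref{lem:iota}(1). The only thing worth stating carefully is that one uses surjectivity of $\iota^*$ (so that testing against $\Br C$ is the same as testing against pullbacks from $\Br J$); injectivity is not needed, and indeed would fail. I would therefore write the proof in two or three lines, citing Lemma~\ref{lem:iota}(1) and the functoriality of the Brauer-Manin pairing.
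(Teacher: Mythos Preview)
Your proof is correct and matches the paper's approach: the paper states that this corollary (together with the surrounding ones) follows from Lemma~\ref{lem:iota} and functoriality of the Brauer pairing, and your argument spells out precisely that deduction using Lemma~\ref{lem:iota}(1). (Corollary~\ref{cor:AVdiv}, also cited in the paper's blanket justification, is not actually needed for this particular corollary, as your proof makes clear.)
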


        \begin{Corollary}
            Let $(P_v) \in C(\A_k)$ be an adelic point. If there exists $Q \in J(k)$ such that $(\iota(P_v)-Q) \in dJ(\A_k)$, then $(P_v) \in C(\A_k)^{\Br C[d]}$. Conversely, if $s$ is a positive integer such that $d^s\Sha(k,J)[d^\infty] = 0$ and $(P_v) \in C(\A_k)^{\Br C[d]}$ with $n -s \ge 0$, then there exists $Q \in J(k)$ such that $(\iota(P_v) -Q) \in d^{n-s}J(\A_k)$.
        \end{Corollary}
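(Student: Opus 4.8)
The plan is to deduce this as a formal consequence of Corollary~\ref{cor:AVdiv} applied to the abelian variety $A = J$, transporting both implications along $\iota$ by means of the surjectivity in Lemma~\ref{lem:iota}(3) and functoriality of the Brauer--Manin pairing. (As the appearance of $n$ in ``$n-s\ge 0$'' makes clear, the hypothesis of the converse should read $(P_v) \in C(\A_k)^{\Br C[d^n]}$; I prove that statement.) Throughout one uses that the principal polarization identifies $\hat J = \Pic^0_J$ with $J$, so that $\Br_{1/2,d}J$ and the $d$-coverings of $J$ are built from $J[d]$, compatibly with the curve $C$, for which $\Alb^0_C = \Pic^0_C = J$ as well.

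For the forward implication, I would begin from the hypothesis $\iota(P_v) - Q \in dJ(\A_k)$ for some $Q \in J(k)$ and invoke the first assertion of Corollary~\ref{cor:AVdiv} to get $\iota(P_v) \in J(\A_k)^{\Br_{1/2,d}}$. Given any $\alpha \in \Br C[d]$, Lemma~\ref{lem:iota}(3) provides $\beta \in \Br_{1/2,d}J$ with $\iota^{*}\beta = \alpha$, and functoriality of the pairing yields $\langle (P_v),\alpha\rangle_C = \langle \iota(P_v),\beta\rangle_J = 0$. As $\alpha$ was arbitrary, $(P_v) \in C(\A_k)^{\Br C[d]}$. (This half also follows by combining the first part of Corollary~\ref{cor:AVdiv} with Corollary~\ref{cor:curved-ab}, since $\iota(P_v)-Q\in dJ(\A_k)$ exhibits a lift of $(P_v)$ to the $\iota$-pullback of the $d$-covering $x\mapsto dx+Q$ of $J$, hence $(P_v)\in C(\A_k)^{d\text{-ab}}$.)

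For the converse, I would run the same functoriality argument in reverse at level $d^n$: for every $\beta \in \Br_{1/2,d^n}J$ we have $\iota^{*}\beta \in \Br C[d^n]$ by Lemma~\ref{lem:iota}(3), so $\langle \iota(P_v),\beta\rangle_J = \langle (P_v),\iota^{*}\beta\rangle_C = 0$ using the hypothesis $(P_v) \in C(\A_k)^{\Br C[d^n]}$. Hence $\iota(P_v) \in J(\A_k)^{\Br_{1/2,d^n}}$, and the second assertion of Corollary~\ref{cor:AVdiv}, applied to $A=J$ with the given $s$ and $n$ (invoking $d^s\Sha(k,J)[d^\infty]=0$ and $n-s\ge 0$), produces $Q \in J(k)$ with $(\iota(P_v)-Q) \in d^{n-s}J(\A_k)$, as required.

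There is no genuine obstacle: the argument is a bookkeeping exercise assembling results already in hand. The only points needing care are (i) keeping the self-duality $\hat J \cong J$ explicit so that ``$\Br_{1/2,d}$'', ``$d$-covering'', and ``$J[d]$'' for the abelian variety $J$ coincide with the corresponding objects attached to $C$ through $\iota$, and (ii) using Lemma~\ref{lem:iota}(3) at the correct torsion level---$d$ in the forward direction and $d^n$ in the converse---so that the surjectivity/functoriality matches the input and output of Corollary~\ref{cor:AVdiv}.
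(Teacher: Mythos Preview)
Your proposal is correct and follows exactly the route the paper indicates: the paper does not give a separate proof for this corollary but simply states that it (together with the neighbouring corollaries) follows from ``Corollary~\ref{cor:AVdiv}, Lemma~\ref{lem:iota} and functoriality of the Brauer pairing,'' which is precisely the combination you unpack. You also correctly identify the typo (the hypothesis in the converse should read $(P_v)\in C(\A_k)^{\Br C[d^n]}$). One minor point: your claim that $\iota^*\beta\in\Br C[d^n]$ for $\beta\in\Br_{1/2,d^n}J$ is a literal reading of Lemma~\ref{lem:iota}(3), but strictly speaking that map is only well-defined modulo $\Br_0$; this is harmless since constant classes pair trivially with every adelic point and $\Br_0 C\simeq\Br k$ is divisible, so one can adjust by a constant to land in $\Br C[d^n]$.
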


        \begin{Corollary}\label{cor:d-div}
            If for each $n \ge 1$ there exists $Q_n \in J(k)$ such that $(\iota(P_v) - Q_n) \in d^nJ(\A_k)$, then $(P_v) \in C(\A_k)^{\Br C[d^\infty]}$. If $\Sha(k,J)[d^\infty]$ is finite, then the converse holds.
        \end{Corollary}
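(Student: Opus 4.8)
The plan is to bootstrap from the preceding corollary, which handles a single power $d^n$, to the full $d$-primary statement, using only the elementary facts that $\Br C[d^\infty]$ is the directed union $\bigcup_{n\ge 1}\Br C[d^n]$ and that finiteness of $\Sha(k,J)[d^\infty]$ provides a uniform annihilating exponent. In particular no new geometry is needed: everything reduces to the corollary just stated together with the definition of the sets $C(\A_k)^{B}$.

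First I would treat the forward implication. Fix $(P_v)$ and suppose that for each $n\ge 1$ there is $Q_n\in J(k)$ with $\iota(P_v)-Q_n\in d^nJ(\A_k)$. Applying the preceding corollary with $d^n$ in place of $d$ gives $(P_v)\in C(\A_k)^{\Br C[d^n]}$ for every $n$. Since $\Br C[d^\infty]=\bigcup_{n\ge 1}\Br C[d^n]$, the very definition $C(\A_k)^{B}=\bigcap_{\alpha\in B}C(\A_k)^{\alpha}$ gives $C(\A_k)^{\Br C[d^\infty]}=\bigcap_{n\ge 1}C(\A_k)^{\Br C[d^n]}$, whence $(P_v)\in C(\A_k)^{\Br C[d^\infty]}$.

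For the converse, assume $\Sha(k,J)[d^\infty]$ is finite and choose a positive integer $s$ with $d^s\Sha(k,J)[d^\infty]=0$; this is possible precisely because the group is finite (and $d$-primary). Now suppose $(P_v)\in C(\A_k)^{\Br C[d^\infty]}$, so that $(P_v)\in C(\A_k)^{\Br C[d^n]}$ for every $n$. For each $n\ge s+1$, the converse part of the preceding corollary, applied with exponent $n$, yields some $Q\in J(k)$ with $\iota(P_v)-Q\in d^{n-s}J(\A_k)$. As $n$ runs over the integers $\ge s+1$ the exponent $n-s$ runs over all positive integers, so after relabelling we obtain, for every $m\ge 1$, an element $Q_m\in J(k)$ with $\iota(P_v)-Q_m\in d^mJ(\A_k)$, as required.

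I do not anticipate a real obstacle here, as the entire content is packaged in the preceding corollary. The only point requiring care is the passage from the family of statements indexed by the fixed powers $d^n$ to the single statement about $\Br C[d^\infty]$: this is why the finiteness of $\Sha(k,J)[d^\infty]$ (equivalently, the existence of a uniform $s$ with $d^s\Sha(k,J)[d^\infty]=0$) is invoked for the converse, and why it is not needed for the direct implication.
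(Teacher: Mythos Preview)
Your proposal is correct and is exactly the argument the paper has in mind: the paper does not give a separate proof of this corollary but simply records it as a consequence of Corollary~\ref{cor:AVdiv}, Lemma~\ref{lem:iota} and functoriality (equivalently, of the unnamed preceding corollary for curves), and your write-up is the natural unpacking of that deduction.
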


    \subsection{Relating the $d$-primary Brauer-Manin and  Poonen-Scharaschkin sets on curves}\label{subsec:PS}

        Let $C$ be a curve over a global field $k$ with $z$ a zero-cycle of degree $1$ on $C$.  Let $\iota\colon C \hookrightarrow J$ be the embedding, $x\mapsto [x - z]$. For each prime $v$ of $k$ that is of good reduction for $C$ (and hence $J$) use $r_v \colon J(k_v) \to J(\F_v)$ to denote the reduction map and let $r \colon J(\A_k) \to \prod_{v \text{ good}}J(\F_v)$ be the product of the $r_v$ over all primes of good reduction.
        
        For a supernatural number $d$, consider the following commutative diagram,
        \begin{equation}\label{eq:PS}
        \displaystyle
            \xymatrix{
            C(k) \ar[r]^\iota \ar[d] & J(k) \ar[d] \ar[dr]^{r}\\
            C(\A_k) \ar[r]^-{\iota} & J(\A_k) \ar[r]^-{r} & \prod_{v \textup{ good}} J(\F_v) \ar[r]^-{\pr_{d^\infty}} &  \prod_{v \textup{ good}} J(\F_v)[d^\infty]\,.
            }
        \end{equation}
        Assuming finiteness of $\Sha(k,J)$, Scharaschkin showed \cite[Theorem 3.1.1]{Scharaschkin} that $\iota$ induces a bijection between $C(\A_k)^{\Br}$ and the intersection of $\iota(C(\A_k))$ with the topological closure of $J(k)$ in $J(\A_k)$. He also considered the intersection after applying the map $r$ as featured in the following definition. We extend this to consider the $d$-primary and prime-to-$d$ parts.
        
        \begin{Definition}
            The \defi{Poonen-Scharaschkin set} $C(\A_k)^{\PS}$ is {the subset of adelic points whose image under $r\circ\iota$ lands in }the topological closure (with respect to the {product of the} discrete topologies) of $r(J(k))$ inside $\prod_{v \textup{ good}} J(\F_v)$. Similarly, the \defi{$d$-primary Poonen-Scharaschkin set} $C(\A_k)^{\PS[d^\infty]}$ is {the subset of adelic points whose image under $\pr_{d^\infty} \circ r \circ \iota$ lands in } the topological closure of $\pr_{d^\infty}(r(J(k)))$ inside $\prod_vJ(\F_v)[d^\infty]$.
        \end{Definition}
        
        \begin{Proposition}\label{prop:PS} Suppose $\iota\colon C \to J$ is as above and $d$ is an integer.
            \begin{enumerate}
                \item If $\Sha(k,J)[d^\infty]$ is finite, then $C(\A_k)^{\Br C[d^\infty]} \subset C(\A_k)^{\PS[d^\infty]}$.
                \item If $\Sha(k,J)[d^\perp]$ is finite, then $C(\A_k)^{\Br C[d^\perp]} \subset C(\A_k)^{\PS[d^\perp]}$.
            \end{enumerate}
        \end{Proposition}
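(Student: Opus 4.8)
The plan is to convert membership in $C(\A_k)^{\Br C[d^\infty]}$ (resp. $C(\A_k)^{\Br C[d^\perp]}$) into a divisibility statement on $J(\A_k)$ via Corollary~\ref{cor:d-div}, and then to read off the asserted membership in the Poonen--Scharaschkin set by unwinding the definition of closure in a product of discrete topologies. For (1) I would fix $(P_v) \in C(\A_k)^{\Br C[d^\infty]}$ together with a basic open neighbourhood $U$ of $\pr_{d^\infty}(r(\iota(P_v)))$ in $\prod_v J(\F_v)[d^\infty]$, which by definition of the product topology constrains only finitely many coordinates, indexed by good places $v_1,\dots,v_m$; the goal is to produce a point of $\pr_{d^\infty}(r(J(k)))$ lying in $U$. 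Since $\prod_{i=1}^m J(\F_{v_i})[d^\infty]$ is finite, it is annihilated by $d^{n_0}$ for some $n_0$, and since $\Sha(k,J)[d^\infty]$ is finite, Corollary~\ref{cor:d-div} produces $Q \in J(k)$ with $\iota(P_v) - Q \in d^{n_0}J(\A_k)$. Pushing this through the group homomorphisms $r_{v_i}$ and $\pr_{d^\infty}$ shows that, in each coordinate $v_i$, the images of $\iota(P_v)$ and $Q$ differ by an element of $d^{n_0}J(\F_{v_i})[d^\infty] = 0$; hence $\pr_{d^\infty}(r(Q))$ agrees with $\pr_{d^\infty}(r(\iota(P_v)))$ on $v_1,\dots,v_m$ and so lies in $U$. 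As $U$ was arbitrary this gives $(P_v) \in C(\A_k)^{\PS[d^\infty]}$.

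For (2) the same argument applies once one makes one extra observation. Given $(P_v) \in C(\A_k)^{\Br C[d^\perp]}$ and a basic open neighbourhood of $\pr_{d^\perp}(r(\iota(P_v)))$ involving only the good places $v_1,\dots,v_m$, the finite group $\prod_{i=1}^m J(\F_{v_i})[d^\perp]$ has exponent $N$ coprime to $d$; set $e := \prod_{p \mid N} p$, an integer coprime to $d$. Then $\Br C[e^\infty] \subseteq \Br C[d^\perp]$, so (since $C(\A_k)^{B}$ only shrinks as $B$ grows) $(P_v) \in C(\A_k)^{\Br C[e^\infty]}$, and $\Sha(k,J)[e^\infty] \subseteq \Sha(k,J)[d^\perp]$ is finite; Corollary~\ref{cor:d-div} applied with $e$ in place of $d$ then yields, for $n$ large enough that $N \mid e^n$, a point $Q \in J(k)$ with $\iota(P_v) - Q \in e^n J(\A_k)$, and exactly as in (1) the images $\pr_{d^\perp}(r(Q))$ and $\pr_{d^\perp}(r(\iota(P_v)))$ agree on $v_1,\dots,v_m$. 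Hence $(P_v) \in C(\A_k)^{\PS[d^\perp]}$. The only new idea here is the harmless remark that any prescribed finite set of coordinates involves only primes dividing a fixed integer coprime to $d$, which reduces the prime-to-$d$ statement to the integral statement of Corollary~\ref{cor:d-div}.

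I expect the only point requiring care to be the bookkeeping at the interface of the two descriptions: one must keep in mind that closure in $\prod_v J(\F_v)[d^\infty]$ (resp. $\prod_v J(\F_v)[d^\perp]$) for the product of discrete topologies is detected coordinate-wise on finite sets of places, and that finiteness of each $J(\F_v)$ — hence of the relevant finite sub-products, which carry an exponent of the expected $d$-primary (resp. prime-to-$d$) type — is precisely what upgrades the divisibility $\iota(P_v) - Q \in d^n J(\A_k)$ of Corollary~\ref{cor:d-div} into literal equality of images on a prescribed finite set of coordinates, once $n$ is large enough that $d^n$ (resp. $e^n$) annihilates that sub-product. Everything else is functoriality of the reduction and projection homomorphisms.
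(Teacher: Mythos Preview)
Your proof is correct and follows essentially the same approach as the paper's: both arguments fix a finite set of good places, use the exponent of the relevant finite product to invoke Corollary~\ref{cor:d-div}, and then read off agreement of images coordinate-wise. The only cosmetic difference is that the paper unifies (1) and (2) into a single statement by allowing $d$ to be supernatural (so that $d^\perp$ becomes $D^\infty$ for a suitable supernatural $D$), whereas you treat the two cases separately; your auxiliary integer $e$ in part (2) plays exactly the role of the paper's exponent $e$ in the unified argument.
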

        
        \begin{Remark}\label{rem:Scharaschkin}
            This proposition is a slightly refined version of Scharaschkin's result \cite{Scharaschkin} that if $\Sha(k,J)$ is finite and $C(\A_k)^{\PS} = \emptyset$, then $C(\A_k)^{\Br C} = \emptyset$.
        \end{Remark}

        \begin{proof}
            It suffices to prove the first statement for $d$ a supernatural integer. Let $(P_v) \in C(\A_k)^{\Br C [d^\infty]}$. We will show that, for any finite set of places $S$ of good reduction for $J$, there is some $Q \in J(k)$ such that $(P_v)$ and $Q$ have the same image in $\prod_{v\in S}J(\F_v)[d^\infty]$. Let $e$ be the exponent of the finite group $\prod_{v \in S}J(\F_v)[d^\infty]$. By assumption $(P_v) \in C(\A_k)^{\Br C [e^\infty]}$. Since $\Sha(k,J)[e^\infty] \subset \Sha(k,J)[d^\infty]$ is assumed to be finite, Corollary~\ref{cor:d-div} yields the existence of some $Q \in J(k)$ such that $(\iota(P_v) - Q) \in eJ(\A_k)$. Then $(P_v)$ and $Q$ have the same image in $\prod_{v\in S}J(\F_v)[d^\infty]$, since $e$ is the exponent of this group.
        \end{proof}

        In the case of constant curves over global function fields, we have the following partial converse to Proposition~\ref{prop:PS}.

        \begin{Proposition}\label{prop:PSconverse}
            Suppose $C$ is a constant curve over a global function field $k$ and $d \ge 1$ is an integer. 
            \begin{enumerate}
                \item If $C(\A_k)^{\Br C} = \emptyset$, then $C(\A_k)^{\PS}= \emptyset$. 
                \item If $C(\A_k)^{\Br C[d^\infty]} = \emptyset$, then $C(\A_k)^{\PS[d^\infty]} = \emptyset$.
            \end{enumerate}
        \end{Proposition}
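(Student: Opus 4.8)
The plan is to prove the contrapositive of each statement: assuming $C(\A_k)^{\PS}\ne\emptyset$ (resp. $C(\A_k)^{\PS[d^\infty]}\ne\emptyset$), I would produce an adelic point of $C$ surviving the Brauer--Manin obstruction from $\Br C$ (resp. from $\Br C[d^\infty]$). By Corollary~\ref{Cor:BMCurvesBMJac} and the corollaries following it, together with the surjections $\Br J\twoheadrightarrow\Br C$ and $\Br_{1/2,d^n}J\twoheadrightarrow\Br C[d^n]$ of Lemma~\ref{lem:iota}, it is enough to exhibit $(P_v)\in C(\A_k)$ with $\iota(P_v)$ orthogonal to $\Br J$ (resp. to every $d$-primary class of $\Br J$). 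Write $k=\F_q(B)$ for the nice curve $B/\F_q$ with function field $k$, so that $C=C_0\times_{\F_q}k$ and $J=J_0\times_{\F_q}k$ with $J_0=\Jac C_0$; by F.~K.~Schmidt's theorem the degree-$1$ zero-cycle $z$ may be taken to be the base change of one on $C_0$.

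Next I would record the features of the constant case that drive the argument. (i) Every place $v$ is of good reduction, with $C_{\F_v}=C_0\times_{\F_q}\F_v$, the reduction map $C_0(k_v)\to C_0(\F_v)$ is surjective, and the inclusion $\F_v\hookrightarrow k_v$ (equal characteristic) splits reduction on $J$, giving $J(k_v)=J_1(k_v)\oplus J_0(\F_v)$ where $J_1(k_v)=\ker(J(\OO_v)\to J(\F_v))$ is the pro-$p$ group of points of the formal group; hence $J(\A_k)=\mathbf F\oplus\mathbf R$ with $\mathbf F:=\prod_v J_1(k_v)$, $\mathbf R:=\prod_v J_0(\F_v)$, and $r$ is the projection to $\mathbf R$. (ii) $\Sha(k,J)$ is finite: the smooth projective surface $C_0\times_{\F_q}B$ is a proper regular model of $C$ over $B$, its Brauer group is finite by Tate's theorem on homomorphisms of abelian varieties over finite fields, and finiteness of $\Sha$ follows from the Artin--Grothendieck comparison; in particular Proposition~\ref{prop:PS} applies here unconditionally. (iii) Since reduction $C_0(k_v)\to C_0(\F_v)$ is surjective, any adelic point in $C(\A_k)^{\PS}$ or $C(\A_k)^{\PS[d^\infty]}$ may be replaced, without changing its membership in that set, by a ``constant'' point $(P_v)$ with $P_v\in C_0(\F_v)\hookrightarrow C_0(k_v)$; for such a point $\iota(P_v)\in\mathbf R$ and $r(\iota(P_v))=\iota(P_v)$.

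Now the main line of argument. Each $J(k_v)=J(\OO_v)$ is profinite, so $J(\A_k)$ is compact and $r$ is a closed map; hence $\overline{r(J(k))}=r(\overline{J(k)})$. Take a constant $(P_v)\in C(\A_k)^{\PS}$; then $\iota(P_v)=r(\iota(P_v))\in\overline{r(J(k))}=r(\overline{J(k)})$, so $\iota(P_v)=r(y)$ for some $y\in\overline{J(k)}\subseteq J(\A_k)^{\Br}$, whence $\iota(P_v)-y=-\pr_{\mathbf F}(y)\in\mathbf F$. Writing $y=\lim_i Q_i$ with $Q_i\in J(k)$ and using $\langle Q_i,\beta\rangle=0$ for $\beta\in\Br J$ (global reciprocity), a short computation gives $\langle\iota(P_v),\beta\rangle=\langle r(Q_i),\beta\rangle=-\langle\pr_{\mathbf F}(Q_i),\beta\rangle$ for all large $i$. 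So everything comes down to the claim: for every $\phi\in J(k)=\operatorname{Mor}_{\F_q}(B,J_0)$, the ``jet'' adelic point $\pr_{\mathbf F}(\phi)\in\mathbf F$ (equivalently, the ``reduction'' adelic point $(\phi(b_v))_v\in\mathbf R$) is orthogonal to $\Br J$. When $\beta$ has order prime to $p$ this is immediate, since $J_1(k_v)$ is uniquely $\ell$-divisible for every prime $\ell\ne p$, so $\langle J_1(k_v),\beta\rangle=0$ at every $v$; combined with the observation that $r_v^{-1}(J_0(\F_v)[d^\perp])\subseteq\bigcap_m d^m J(k_v)$ when $\gcd(d,p)=1$, this already proves part~(2) for $d$ prime to $p$ and reduces part~(1) and the remaining case of part~(2) to the $p$-primary claim.

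The $p$-primary claim is the main obstacle, because $J_1(k_v)$ is only pro-$p$, not $p$-divisible, and a $p$-primary class that is ramified at $v$ can pair nontrivially with $J_1(k_v)$. I would attack it by a reciprocity argument on the model: for $\phi\in\operatorname{Mor}_{\F_q}(B,J_0)$ with graph $\Gamma_\phi\cong B$ inside $J_0\times_{\F_q}B$, restrict $\beta\in\Br J$ along $\Gamma_\phi$ to get a class in $\Br\F_q(B)$, and use the reciprocity law $\sum_{b\in B}\partial_b(\beta|_{\Gamma_\phi})=0$ for the Brauer group of the curve $B$ over the finite field $\F_q$, after identifying the local residue $\partial_b(\beta|_{\Gamma_\phi})$ with the local pairing $\langle\phi_v,\beta\rangle$. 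One then has to separate the contribution of $\mathbf F$ from that of $\mathbf R$; here the extra input is that for constant $J$ the connected--\'etale filtration of the $p$-divisible group $J_0[p^\infty]$ is defined over $\F_q$, so that the connected part, which governs $\mathbf F=\prod_v J_1(k_v)$, has trivial flat cohomology over each residue field $\F_v$ and contributes nothing to the local terms, leaving the identity $\langle\pr_{\mathbf F}(\phi),\beta\rangle=0$. Carrying this out carefully, and checking that it is compatible with the $d^\infty$-truncated Poonen--Scharaschkin condition (which only remembers $J_0(\F_v)$ modulo its prime-to-$d$ part), is where the real work lies.
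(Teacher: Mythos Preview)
Your approach diverges from the paper's, and there is a genuine gap in the $p$-primary step.

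The paper's proof rests on a single device specific to constant varieties: the $\F_q$-Frobenius $F$ acts on $X(\A_k)$ preserving $X(k)$, and the sequence $F^{n!}$ converges \emph{uniformly} to the reduction map $r\colon X(\A_k)\to\prod_v X(\F_v)\subset X(\A_k)$ (Lemma~\ref{lem:iterateFrob}). Given $(P_v)\in C(\A_k)^{\PS}$ with witnesses $Q_m\in J(k)$ satisfying $r_v(\iota(P_v))=r_v(Q_m)$ for the first $m$ places, the diagonal sequence $F^{n!}(Q_n)\in J(k)$ converges to $r(\iota(P_v))$, so $r(\iota(P_v))\in\overline{J(k)}\subset J(\A_k)^{\Br}$ and hence $r(P_v)\in C(\A_k)^{\Br}$. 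The $d$-primary version is handled the same way. No splitting $J(\A_k)=\mathbf F\oplus\mathbf R$ and no separate treatment of $p$-primary classes is needed.

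Your reduction up to the claim ``$\langle\pr_{\mathbf F}(\phi),\beta\rangle=0$ for all $\phi\in J(k)$ and all $\beta$'' is sound (the bilinearity you implicitly use holds for $\beta\in\Br_{1/2}J$, which suffices since $\Br_{1/2}J\twoheadrightarrow\Br C$), and the prime-to-$p$ case of the claim is correct. But your sketch for the $p$-primary case does not work. Restricting $\beta$ to the graph $\Gamma_\phi\cong B$ and invoking reciprocity on $B$ simply reproduces $\sum_v\operatorname{inv}_v(\phi^*\beta)=0$, i.e.\ $\langle\phi,\beta\rangle=0$, which you already knew; it does not separate the $\mathbf F$- and $\mathbf R$-contributions. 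Nor can any purely local argument do so: at places where $\beta$ is ramified there are elements of $J_1(k_v)$ pairing nontrivially with $\beta$, so the vanishing of $\langle\pr_{\mathbf F}(\phi),\beta\rangle$ is a genuinely global fact about $\phi\in J(k)$, and your connected--\'etale remark over $\F_v$ does not use that hypothesis. In fact your claim is equivalent (via finiteness of $\Sha$, which you noted) to $r(\phi)\in\overline{J(k)}$ for every $\phi\in J(k)$, and the clean proof of \emph{that} is precisely the Frobenius limit $r(\phi)=\lim_n F^{n!}(\phi)$. So the missing idea is exactly Lemma~\ref{lem:iterateFrob}.
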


        The proof will require the following lemma.
        \begin{Lemma}\label{lem:iterateFrob}
            Suppose $X$ is a nice constant variety over a global function field $k$ with constant field $\F_q$. Let $F\colon X \to X$ denote the $\F_q$-Frobenius. The sequence of functions $F^{n!} \colon X(\A_k) \to X(\A_k)$ converges uniformly to the ``reduction map'' $X(\A_k) \to X(\A_\F) := \prod X(\F_v) \subset X(\A_k)$.
        \end{Lemma}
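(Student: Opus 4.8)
The plan is to reduce the assertion to a short power-series computation at each place, carried out after invoking the Cohen structure theorem to identify the valuation ring $\OO_v\subset k_v$ with $\F_v[[t]]$. First, since $X$ is proper the valuative criterion gives $X(k_v)=X(\OO_v)$ for every $v$, so $X(\A_k)=\prod_v X(\OO_v)$ carries the product topology and is compact; hence it has a canonical uniform structure (the unique one inducing its topology), a basis of entourages being
\[
 E_{S,N}=\bigl\{\,((\phi_v),(\psi_v))\ :\ \phi_v\ \text{and}\ \psi_v\ \text{agree in}\ X(\OO_v/\fm_v^{N})\ \text{for all}\ v\in S\,\bigr\},
\]
one for each finite $S\subset\Omega_k$ and each $N\ge1$, where $\fm_v$ is the maximal ideal of $\OO_v$ (the sets $X(\OO_v/\fm_v^N)$ being finite since $X$ is of finite type). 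Writing $\overline{\phi_v}\in X(\F_v)\subset X(\OO_v)$ for the reduction of $\phi_v$, so that the reduction map sends $(\phi_v)$ to $(\overline{\phi_v})$, uniform convergence $F^{n!}\to(\text{reduction})$ then unwinds to: for each finite $S$ and each $N$ there is an $m_0$ so that $F^{m!}(\phi_v)$ and $\overline{\phi_v}$ agree in $X(\OO_v/\fm_v^N)$ for every $m\ge m_0$, every $(\phi_v)\in X(\A_k)$, and every $v\in S$.

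For the local computation I would write $X=X_0\times_{\F_q}k$, so that $F$ is the base change to $k$ of the $\F_q$-Frobenius of $X_0$; this morphism is the identity on underlying topological spaces and acts by the $q$-power map on the structure sheaf, so it stabilizes every affine open $\Spec A\subset X_0$ and $F^m$ restricts there to the ring endomorphism $a\mapsto a^{q^m}$ of $A$. Fix $v$, set $d_v=[\F_v:\F_q]=\deg v$, and use the Cohen structure theorem to fix an isomorphism $\OO_v\cong\F_v[[t]]$ compatible with reduction, so that $\F_v\subset\OO_v$ is a coefficient field. Given $\phi_v\in X(\OO_v)$, choose an $\F_q$-rational affine chart $\Spec\F_q[x_1,\dots,x_n]/I\subset X_0$ through which $\phi_v$ factors (one containing the image of the closed point) and write $\phi_v=(f_1,\dots,f_n)$ with $f_i=\sum_{j\ge0}c_{ij}t^{j}$, $c_{ij}\in\F_v$; then $F^m(\phi_v)=(f_1^{\,q^m},\dots,f_n^{\,q^m})$ while $\overline{\phi_v}=(c_{10},\dots,c_{n0})$. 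When $d_v\mid m$ one has $c_{ij}^{\,q^m}=c_{ij}$, hence (in characteristic $p$) $f_i^{\,q^m}=\sum_j c_{ij}t^{\,jq^m}$, so $f_i^{\,q^m}-c_{i0}\in t^{q^m}\OO_v=\fm_v^{q^m}$; that is, $F^m(\phi_v)\equiv\overline{\phi_v}\pmod{\fm_v^{q^m}}$ whenever $d_v\mid m$, and this bound is uniform in $\phi_v$.

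Finally I would globalize: given $S$ and $N$, set $m_0=\max\bigl(\{\deg v:v\in S\}\cup\{N\}\bigr)$. For $m\ge m_0$ the integer $m!$ is divisible by $\deg v$ for every $v\in S$ and $q^{m!}\ge q^{m}\ge N$, so the local estimate gives $F^{m!}(\phi_v)\equiv\overline{\phi_v}\pmod{\fm_v^{N}}$ for all $(\phi_v)$ and all $v\in S$, i.e.\ $(F^{m!},(\text{reduction}))\in E_{S,N}$ for all $m\ge m_0$ --- which is exactly uniform convergence. The factorial is essential: for a place $v$ with $\deg v>1$ the plain sequence $F^{n}(\phi_v)$ does not converge but cycles through the Frobenius conjugates of $\overline{\phi_v}$, so one needs the exponent to eventually become divisible by every $\deg v$ with $v$ in a prescribed finite set. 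I expect the only genuine subtlety to be the first step --- pinning down the meaning of uniform convergence via the canonical uniformity of the compact space $X(\A_k)$, and legitimizing the chart bookkeeping (for which the key point is that the $\F_q$-Frobenius is the identity on underlying topological spaces); everything afterward is the elementary estimate above.
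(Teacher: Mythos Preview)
Your proof is correct and takes a genuinely different route from the paper's. The paper argues in three lines: pointwise convergence of $F^{n!}$ to the reduction map is declared ``clear'', the family $\{F^{n!}\}$ is equicontinuous because $F$ is nonexpanding under a metric inducing the adelic topology, and uniform convergence then follows from the Arzel\`a--Ascoli theorem on the compact space $X(\A_k)$. You instead carry out an explicit local computation: after identifying $\OO_v\cong\F_v[[t]]$ via Cohen's theorem, you show directly that $F^m(\phi_v)\equiv\overline{\phi_v}\pmod{\fm_v^{q^m}}$ whenever $\deg v\mid m$, and then choose $m_0$ so that $m!$ is divisible by every residue degree in $S$ and $q^{m!}\ge N$. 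Your approach is more elementary --- it avoids Arzel\`a--Ascoli entirely --- and also more informative, since your power-series estimate is exactly the content the paper waves away as ``clear'' (pointwise convergence), and the explicit modulus $\fm_v^{q^{m!}}$ makes the rate of convergence visible. The paper's argument is slicker and separates the two ingredients (pointwise limit plus equicontinuity), which is a more reusable template; but the underlying mechanism in both is the same Frobenius identity $f^{q^m}=\sum_j c_j^{q^m}t^{jq^m}$ in characteristic $p$, which you state explicitly and the paper leaves implicit.
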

        
        \begin{proof}
            It is clear that the pointwise limit of the sequence is the reduction map. The sequence is equicontinuous, since for any $x,y \in X(\A_k)$ we have $\rho(x,y) \ge \rho(F(x),F(y))$, where $\rho$ is any metric inducing the adelic topology. Uniform convergence then follows from the Arzel\'a-Ascoli theorem, which applies as $X(\A_k)$ is compact (e.g., \cite[Ch. 7, Lemma 39]{Royden}). 
        \end{proof}

        \begin{proof}[Proof of Proposition~\ref{prop:PSconverse}]
            Enumerate the places of $k$ and define  $S_m$ to be the set consisting of the first $m$ of them.  {Note that every place $v\in \Omega_k$ is a place of good reduction since $C$ is a constant curve. }Suppose $(P_v) \in C(\A_k)^{\PS}$. Then for each $m \ge 1$ there exist $Q_m \in J(k)$ such that $r_v(\iota(P_v)) = r_v(Q_m)$ in $J(\F_v)$, for all primes $v$ in $S_m$. By Lemma~\ref{lem:iterateFrob} the limit $\lim_{n}F^{n!}(Q_n)$ exists and is equal to $\lim_nF^{n!}(\iota(P_v)) = r(\iota(P_v)) \in J(\A_\F) \subset J(\A_k)$. Thus, $r(\iota(P_v)) \in \overline{J(k)} \subset J(\A_k)^{\Br}$.  {Since $r$ commutes with $\iota$, Corollary~\ref{cor:d-div} implies that $r(P_v)\in C(\A_k)^{\Br}$.}

            Now we prove the $d$-primary version. Assume $(P_v) \in C(\A_k)^{\PS[d^\infty]}$. Then there exist $Q_n \in J(k)$ such that $Q_n - \iota(P_v)$ reduces to $J(\F_v)[d^\perp]$ for all $v \in S_n$. By Lemma~\ref{lem:iterateFrob}, the limits $  r(\iota(P_v)) = \lim_nF^{n!}(\iota(P))$ and $Q_0 := \lim_n F^{n!}Q_n$ exist and are equal after projecting to the $d$-primary subgroup of $J(\A_\F) = \prod J(\F_v)$. So $r(\iota(P_v))-Q_0 \in J(\A_\F)[d^\perp] \subset J(\A_k)[d^\perp]$. In particular, $r(\iota(P_v))-Q_0 \in d^\infty J(\A_k)$. Also, $Q_0 \in \overline{J(k)} \subset J(\A_k)^{\Br}$. Since $\Sha(k,J)[d^\infty]$ is finite, Corollary~\ref{cor:d-div} gives, for every $n$, some $R_n \in J(k)$ such that $Q_0-R_n \in d^nJ(\A_k)$. Thus we see that $(r(\iota(P_v)) - R_n) = (r(\iota(P_v))-Q_0) - (Q_0 - R_n) \in d^nJ(\A_k)$. It now follows from Corollary~\ref{cor:d-div} and {the commutativity of $r$ and $\iota$ that $(r(P_v)) \in C(\A_k)^{\Br C[d^\infty]}$}.
        \end{proof}

    \subsection{Sufficiency of the Brauer-Manin obstruction on certain constant curves}

        \begin{Theorem}\label{thm:constant}
            Let  $C$ be a curve defined over a finite field $\F$ with $C(\F) = \emptyset$ and let $k/\F$ be a function field 
            such that  $J(k)$ is not Zariski dense in $J$, the Jacobian of $C$. Then $C(\A_k)^{\Br}= \emptyset$.
        \end{Theorem}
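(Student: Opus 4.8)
The plan is to assume $C(\A_k)^{\Br}\neq\emptyset$, fix $(P_v)\in C(\A_k)^{\Br}$, and derive a contradiction. Write $q=\#\F$, $k=\F(B)$ for a curve $B/\F$, and $J=\Jac C$. Since a curve over a finite field carries a zero-cycle of degree $1$ (F.~K.~Schmidt), we may take the Albanese embedding $\iota\colon C\hookrightarrow J$ to be defined over $\F$. The crucial first input --- and the place where constancy of $C$ is essential --- is that $\Sha(k,J)$ is \emph{finite}: for a constant abelian variety over a global function field this is known, being a consequence of the Tate conjecture for divisors on $J\times_\F B$ (a product of an abelian variety and a curve over the finite field $\F$), which holds by Tate's isogeny theorem over finite fields. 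Granting this, Corollary~\ref{Cor:BMCurvesBMJac} together with Remark~\ref{rem:Akbar} (there are no archimedean places, so $J(\A_k)_\bullet=J(\A_k)$) gives $\iota(P_v)\in J(\A_k)^{\Br}=\overline{J(k)}$, the closure taken in $J(\A_k)$.

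Next I would use that $J(k)$ is not Zariski dense. Over an algebraic closure $\overline{k}$ of $k$, the Zariski closure of the subgroup $J(k)$ is a proper closed subgroup scheme, smooth as $\overline{k}$ is perfect; so its identity component is a proper abelian subvariety $A_1\subsetneq J_{\overline{k}}$, and $J(k)$ lies in a finite union of $\overline{k}$-translates of $A_1$. As $J=\Jac C$ is constant, $A_1$ descends to a proper abelian subvariety $J_1\subsetneq J$ over $\F$ (abelian subvarieties of $J_{\overline{k}}$ are constant, and the descent datum cutting out $A_1$ over $k$ factors through $\Gal(\overline{\F}/\F)$). Put $\pi\colon J\twoheadrightarrow A:=J/J_1$, a nonzero abelian variety over $\F$, and $\psi:=\pi\circ\iota\colon C\to A$; then $\psi$ is non-constant over $\F$, since $\iota(C)$ generates $J$ and $\pi$ is surjective. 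Because $J(k)$ lies in a finite union of translates of $J_1$, each collapsed by $\pi$ to a point, $\pi(J(k))$ is finite; and since each element of $J(k)=\Hom_\F(B,J)$ is carried by $\pi$ to a \emph{constant} $\F$-morphism $B\to A$, the set $T:=\pi(J(k))$ is a finite subset of $A(\F)\subseteq A(k)$.

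Now I combine the two observations. The map $\pi$ induces a continuous map $J(\A_k)\to A(\A_k)$, whence $\psi(P_v)=\pi(\iota(P_v))\in\overline{\pi(J(k))}$. But $\pi(J(k))$, viewed inside $A(\A_k)$, is the finite set of diagonal adelic points attached to $T\subseteq A(\F)$, which is closed; so $\psi(P_v)$ is the diagonal adelic point of a single $a\in T\subseteq A(\F)$, i.e.\ $\psi(P_v)=a$ in $A(k_v)$ for every place $v$. Hence $P_v\in\psi^{-1}(a)(k_v)$ for all $v$, where $\psi^{-1}(a)$ is a nonempty finite subscheme of $C$ defined over $\F$ (nonempty as it receives the $P_v$, finite as $\psi$ is non-constant). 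Every closed point of $\psi^{-1}(a)$ has residue degree $\geq 2$ over $\F$, since a degree-$1$ point would lie in $C(\F)=\emptyset$; let $e_1,\dots,e_r\geq 2$ be these degrees. The existence of a $k_v$-point forces $e_j\mid[\F_v:\F]$ for some $j$, so every residue degree of $k$ is divisible by one of the finitely many integers $e_1,\dots,e_r\geq 2$.

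This is impossible: by the Weil bounds $B/\F$ has a closed point of every sufficiently large degree, hence one of degree $n$ for some prime $n>\max_j e_j$, and no $e_j$ divides such an $n$. Therefore $C(\A_k)^{\Br}=\emptyset$. The single genuinely delicate ingredient is the finiteness of $\Sha(k,J)$ --- exactly where constancy is indispensable --- while the descent of $A$ to $\F$ is routine (if field-of-definition sensitive), and everything after that is bookkeeping with the Albanese embedding and with residue degrees of places.
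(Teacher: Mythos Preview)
Your proof is correct and takes a genuinely different route from the paper's. Both arguments begin identically: embed $C$ in $J$ via an $\F$-rational zero-cycle of degree $1$, invoke finiteness of $\Sha(k,J)$ for a constant abelian variety (Tate--Milne), and deduce via Corollary~\ref{Cor:BMCurvesBMJac} and Remark~\ref{rem:Akbar} that any $(P_v)\in C(\A_k)^{\Br}$ satisfies $\iota(P_v)\in\overline{J(k)}$ in $J(\A_k)$.

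From here the arguments diverge. The paper stays inside $J$: it notes that the Zariski closure $Z$ of $J(k)$ is a finite union of torsion translates of abelian subvarieties defined over a finite field, so $Z\cap C$ is $0$-dimensional and defined over a finite field, whence every point of $Z\cap C$ lies in $J(\overline{\F})$ and is therefore torsion of some bounded order $n$. Since each component of $\iota(P_v)$ lies in $Z\cap C$, the adelic point $\iota(P_v)$ is $n$-torsion in $\overline{J(k)}$; the paper then invokes \cite[Lemma~3.7]{PoonenVoloch}, which identifies the torsion subgroup of $\overline{J(k)}$ with $J(k)_{\textup{tors}}$, to conclude $\iota(P_v)\in J(k)$ and hence $(P_v)\in C(k)$, contradicting $C(k)=\emptyset$.

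Your approach instead passes to a quotient. You descend the identity component of $\overline{J(k)}^{\textup{Zar}}$ to an abelian subvariety $J_1\subsetneq J$ over $\F$ (using rigidity of abelian subvarieties under extension of algebraically closed fields), set $A=J/J_1$, and observe that $\pi(J(k))$ is a finite subgroup of $A(k)$, hence torsion, hence contained in $A(\F)$ (your justification here is terse --- the point is that a torsion element of $A(k)$ corresponds to a morphism $B\to A$ factoring through a finite $\F$-group scheme, hence constant with $\F$-rational value since $B$ is geometrically connected over $\F$). Continuity then pins $\psi(P_v)$ to a single $a\in A(\F)$, and you finish with an elementary residue-degree argument: the fiber $\psi^{-1}(a)\subset C$ has no $\F$-points, so every place of $k$ has degree divisible by one of finitely many integers $\ge 2$, contradicting the Weil bounds for $B$. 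This trades the external lemma \cite[Lemma~3.7]{PoonenVoloch} for the descent of $J_1$ and a self-contained counting argument; the paper's route is more direct once that lemma is in hand, but yours is arguably more elementary.
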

        
        \begin{Remark}
            Note that in \cite{PoonenVoloch} it is shown that the Brauer-Manin obstruction is 
            the only obstruction to weak approximation for curves over function fields satisfying additional,
            but quite general, hypotheses. One of these hypotheses, however, excludes the case of 
            constant curves. There is also an example in \cite{PoonenVoloch} showing that, on certain
            constant curves, there can be points in $C(\A_k) \cap {\overline {J(k)}}$ 
            which are not in $C(k)$ (though they are in ${\overline {C(k)}}$). Additionally, if $J(k)$ is finite, then the Brauer-Manin obstruction is the only obstruction to the Hasse principle, as shown by Scharaschkin \cite{Scharaschkin} for number fields, and
            the same argument applies to function fields. The above theorem, in the case of constant curves over function fields,
            extends these results of \cites{Scharaschkin,PoonenVoloch} {to cover some cases where the Jacobian is not simple.}
        \end{Remark}

        \begin{proof}
            By the Weil bounds, $C$ has a $0$-cycle of degree $1$. We may therefore view $C$ as a subvariety of $J$ as in Section~\ref{subsec:curves}. The Zariski closure $Z$ of $J(k)$ is a finite union of {torsion translates} of {(base changes of)} proper abelian subvarieties of $J$ defined over a finite field, so $Z \cap C$ is a $0$-dimensional scheme defined over a finite field. Therefore all points lying on this intersection are {points of $C$ that are} torsion in $J(\kbar)$ of order dividing some integer $n$.
            This, together with the assumption that $C(\F)=\emptyset$ already implies that $C(k) = \emptyset$.
        
            We now prove that the Brauer-Manin set is empty.  Recall that Tate and Milne have proved that the Tate-Shafarevich group of a constant abelian variety is finite.  (See Tate \cite{Tate-BSD} and Milne \cite[Theorem 8.1]{Milne1975}. The restriction that $p \ne 2$ in \cite{Milne1975} can be lifted thanks to \cite{Illusie1979}.)  Therefore, by Remark~\ref{rem:Akbar}, it suffices to show that $C(\A_k) \cap {\overline {J(k)}} = \emptyset$, where ${\overline {J(k)}}$ denotes the closure of $J(k)\subset J(\A_k)$ in the adelic topology. 
            The adelic points on this intersection all have components in $Z \cap C$ so have order dividing $n$. So the points on this intersection belong to the torsion
            subgroup of $\overline{J(k)}$ but that is just the torsion on $J(k)$ itself by \cite[Lemma 3.7]{PoonenVoloch}. But, as we've seen above, $C(k) = \emptyset$ and the claim follows.
        \end{proof}

\section{$d$-primary obstructions for infinitely many coprime $d$}

In this section we show that it is possible for a curve of genus at least $2$ over a global field to have $d$-primary Brauer-Manin obstructions for infinitely many coprime $d$. This is very different than the case of genus one curves where the only obstructions come from the $I$-primary subgroup, where $I$ is the index of the curve (see ~\cite[Theorem 1.2]{CV17}).  Our proof also yields a construction of a hyperelliptic curve $C$ with an odd Brauer-Manin obstruction to the Hasse principle (see Section~\ref{sec:HyperellipticNoOddObstruction}).  Interestingly, the Albanese torsor $\Alb^1_X$ of a hyperelliptic curve $X$ can \emph{never} have an odd Brauer-Manin obstruction to the Hasse principle~\cite[Cor. 4.3]{CV17}.

 \begin{Theorem}\label{thm:GivenEInfinitelyManyd}
        There exists a curve $C/\Q$ such that $C(\A_\Q) \ne \emptyset$ and $C(\A_Q)^{\Br C[d^\infty]} = \emptyset$ for an infinite set of pairwise coprime integers $d$.
    \end{Theorem}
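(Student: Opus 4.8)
The plan is to construct $C$ as a genus-$2$ curve carrying a degree-$2$ map $\pi\colon C\to E$ onto a fixed elliptic curve $E/\Q$ with $E(\Q)=0$, and to exploit that over such an $E$ the arithmetic is controlled by reduction: one forces the mod-$p$ reductions of $C$, at suitable primes $p$, to land in a locus where the relevant group is nontrivial $d$-torsion. Concretely, I would first fix $E/\Q$ with $L(E,1)\neq 0$ and $E(\Q)_{\mathrm{tors}}=0$, so that $E(\Q)=0$ and $\Sha(E/\Q)$ is finite by Kolyvagin's theorem. Then I would build $\pi\colon C\to E$ as a double cover branched over one closed point of $E$ of degree~$2$ (so $C$ has genus~$2$), arranging that: $C$ is everywhere locally soluble, hence $C(\A_\Q)\neq\emptyset$; $C$ has a zero-cycle $z$ of degree~$1$ (e.g.\ by giving $C$ closed points of coprime degrees $2$ and $3$), which defines $\iota\colon C\hookrightarrow J:=\Jac C$ as in Section~\ref{subsec:curves}; the fibre $\pi^{-1}(O_E)$ is a single closed point $\Spec\Q(\sqrt m)$ with $m\in\Q^\times$ a non-square (arranged by twisting the cover); and, writing $J\sim E\times E'$ with $E'$ the Prym elliptic curve, $\Sha(E'/\Q)$ is finite --- e.g.\ by choosing the cover so that $E'$ has analytic rank $\le 1$. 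Because $E(\Q)=0$, the degree-$1$ class $[\pi_*z]$ on $E$ is forced to be $[O_E]$, so under $E\cong\Pic^0E$ the composite $q\circ\iota\colon C\to E$, with $q:=\pi_*\colon J\to E$, is simply $\pi$. Producing such a $C$ explicitly is the step that requires genuine work; the remainder is formal.

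Granting the construction, the integers $d$ come from a Chebotarev argument. First, $J\sim E\times E'$ with $\Sha(E/\Q)$ and $\Sha(E'/\Q)$ finite forces $\Sha(\Q,J)$ finite. Enumerating the rational primes, I would inductively choose primes $p_1<p_2<\cdots$, all of good reduction for $C$ and unramified in $\Q(\sqrt m)$, and set $d_i:=\prod_{\ell \mid \#E(\F_{p_i})}\ell$, imposing that $p_i$ be inert in $\Q(\sqrt m)$ and that $\#E(\F_{p_i})$ be coprime to $d_1\cdots d_{i-1}$. Each condition is a union of Frobenius conjugacy classes of positive density --- the first in $\Gal(\Q(\sqrt m)/\Q)$, and the second, for each of the finitely many $\ell\mid d_1\cdots d_{i-1}$, the condition that $\mathrm{Frob}_{p_i}$ in $\Gal(\Q(E[\ell])/\Q)\subseteq\GL_2(\F_\ell)$ has no eigenvalue~$1$ --- and these are jointly realized by a positive density of primes, by a standard argument handling the finitely many $\ell$ for which the relevant fields are not independent. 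By the Weil bounds $d_i\ge 2$, and by construction the $d_i$ are pairwise coprime, hence distinct and infinite in number.

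It then remains to prove $C(\A_\Q)^{\Br C[d_i^\infty]}=\emptyset$ for every $i$. Fix $i$, put $p=p_i$, $d=d_i$, and suppose $(P_v)\in C(\A_\Q)^{\Br C[d^\infty]}$. Since $\Sha(\Q,J)[d^\infty]$ is finite, Corollary~\ref{cor:d-div} yields, for each $n$, a point $Q_n\in J(\Q)$ with $\iota((P_v))-Q_n\in d^nJ(\A_\Q)$. Applying $q=\pi_*$ and using $E(\Q)=0$, so $q(Q_n)=0$, gives $q(\iota((P_v)))\in d^nE(\A_\Q)$ for all $n$; hence its $p$-component satisfies $q(\iota(P_p))\in\bigcap_n d^nE(\Q_p)$, and its reduction lies in $\bigcap_n d^nE(\F_p)=E(\F_p)[d^\perp]$. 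But $d$ is the radical of $\#E(\F_p)$, so every nonzero element of the finite group $E(\F_p)$ has order divisible by some prime dividing $d$; thus $E(\F_p)[d^\perp]=\{O_E\}$ and $q(\iota(P_p))$ reduces to $O_E$. Since reduction commutes with $q\circ\iota=\pi$, the reduction $\overline{P_p}\in C(\F_p)$ satisfies $\pi(\overline{P_p})=O_E$, i.e.\ $\overline{P_p}$ is an $\F_p$-point of $\pi^{-1}(O_E)_{\F_p}\cong\Spec\F_p[w]/(w^2-\overline m)$; this scheme has no $\F_p$-point precisely because $p$ is inert in $\Q(\sqrt m)$, a contradiction. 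Hence $C(\A_\Q)^{\Br C[d_i^\infty]}=\emptyset$ for all $i$, and $\{d_i\}$ is the desired infinite set of pairwise coprime integers.

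The hard part will be the explicit construction of $C$: simultaneously arranging everywhere local solubility, a degree-$1$ zero-cycle but no rational point, the prescribed shape of the fibre $\pi^{-1}(O_E)$, and finiteness of $\Sha$ of the Prym quotient. The Brauer-group input is exactly Corollary~\ref{cor:d-div}, and the reduction-theoretic and Chebotarev steps are routine.
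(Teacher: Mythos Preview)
Your strategy is essentially the one the paper uses: build a double cover $\pi\colon C\to E$ of a rank-$0$ elliptic curve with finite $\Sha$, arrange that $\pi^{-1}(O_E)$ is a nontrivial quadratic point, and then run a Chebotarev argument to produce infinitely many primes $p$ that are inert in the residue field of $\pi^{-1}(O_E)$ and for which the integers $\#E(\F_p)$ are pairwise coprime. The endgame---reducing modulo $p$ to force the reduction of $P_p$ into the empty fibre $\pi^{-1}(O_E)(\F_p)$---is identical.

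The main difference is that you take an unnecessary detour through the Jacobian. You invoke Corollary~\ref{cor:d-div} for $C\hookrightarrow J$, which forces you to assume a zero-cycle of degree~$1$ on $C$ and finiteness of $\Sha(\Q,J)$, hence of $\Sha(\Q,E')$ for the Prym $E'$. The paper sidesteps all of this: by functoriality of the Brauer pairing under $\pi$, the inclusion $\pi^*\Br E[d^\infty]\subset\Br C[d^\infty]$ already gives $\pi((P_v))\in E(\A_\Q)^{\Br E[d^\infty]}$, and one then applies Corollary~\ref{cor:AVdiv} directly to $E$ (this is packaged as Proposition~\ref{prop:CoverOfA_BMSizeOfJac}). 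Only $\Sha(\Q,E)$ finite and $E(\Q)$ finite are needed; no hypothesis on $C$'s index or on the Prym. This substantially lightens the construction, and the paper carries it out explicitly (Lemma~\ref{lem:LocallySolvableCover}), producing a genus-$3$ cover branched over four points rather than your genus-$2$ cover. On the Chebotarev side, the paper is also more careful: it fixes $E$ with surjective mod-$q$ representations for every prime $q$, uses Serre's open image theorem to separate $\Q(E[p])$ from $\Q(E[n])$, and chooses the quadratic field to be $\Q(\sqrt p)\subset\Q(\zeta_p)\subset\Q(E[p])$ so that the inert condition and the ``no eigenvalue $1$'' condition can be imposed inside a single $\GL_2(\F_p)$. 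Your ``standard argument handling the finitely many $\ell$ for which the relevant fields are not independent'' hides a genuine wrinkle at $\ell=2$: if $\Q(\sqrt m)=\Q(\sqrt{\Delta_E})\subset\Q(E[2])$, then every $p$ inert in $\Q(\sqrt m)$ has $2\mid\#E(\F_p)$, so you must exclude this when choosing $m$.
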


    \begin{Proposition}\label{prop:Rank0BMpSizeJac}
        Let $C$ be a curve of index $a1$ over a global field $k$ whose Jacobian $J$ has rank $0$.  Let $v$ be a finite place of good reduction for $C$, let $p = \Char \F_v$ and let $d := p\cdot\#J(\F_v)$. If {$\Sha(k, J)[d^{\infty}]$ is finite} and $C(\A_k)^{\Br} = \emptyset$, then $C(\A_k)^{\Br C[d^\infty]} = \emptyset$.
    \end{Proposition}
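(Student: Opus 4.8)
The plan is to argue by contradiction. Assume $C(\A_k)^{\Br C[d^\infty]}\neq\emptyset$ and fix an adelic point $(P_w)_w$ in it (I use $w$ for the running place, to avoid clashing with the distinguished place $v$); I will produce a point in $C(k)$, contradicting $C(k)\subseteq C(\A_k)^{\Br}=\emptyset$. First I would record that $J(k)$ is finite: it is finitely generated by the Mordell--Weil theorem (resp.\ Lang--N\'eron over a function field) and has rank $0$ by hypothesis. Next I would feed $(P_w)_w$ into the converse direction of Corollary~\ref{cor:d-div}, which is available since $\Sha(k,J)[d^\infty]$ is finite: it produces, for every $n\ge 1$, some $Q_n\in J(k)$ with $\iota((P_w)_w)-Q_n\in d^nJ(\A_k)$. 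Finiteness of $J(k)$ lets me pick a single $Q\in J(k)$ equal to $Q_n$ for infinitely many $n$, and the nesting $d^{n+1}J(\A_k)\subseteq d^nJ(\A_k)$ then gives $\iota((P_w)_w)-Q\in\bigcap_{n\ge1}d^nJ(\A_k)$. Since $J$ is proper we have $J(\A_k)=\prod_w J(k_w)$, so passing to the $v$-component gives $\iota(P_v)-Q\in\bigcap_{n\ge1}d^nJ(k_v)$.

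The heart of the argument is the identity $\bigcap_{n\ge1}d^nJ(k_v)=0$, which is exactly what the choice $d=p\cdot\#J(\F_v)$ is designed to guarantee. I would prove it as follows. As $k_v$ is non-archimedean and $J$ is proper, $J(k_v)$ is a profinite abelian group, so it decomposes canonically as $\prod_q J(k_v)_q$ over the rational primes $q$, with $J(k_v)_q$ the pro-$q$ part. For $q\nmid d$ the image of $d$ in $\Z_q$ is a unit, so $d^n$ acts invertibly on $J(k_v)_q$; for $q\mid d$ one has $d^nJ(k_v)_q\subseteq q^nJ(k_v)_q$ and $\bigcap_n q^nJ(k_v)_q=0$, since any element of the intersection dies in every finite quotient. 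Hence $\bigcap_n d^nJ(k_v)=\prod_{q\nmid d}J(k_v)_q$. Now fix $q\nmid d$. Because $p\mid d$ we have $q\neq p$, and the good-reduction exact sequence $0\to J_1(k_v)\to J(k_v)\to J(\F_v)\to 0$ (surjective by smoothness of the N\'eron model) has kernel of reduction $J_1(k_v)$ which is pro-$p$ --- it is filtered with successive quotients $\F_v$-vector spaces, an argument valid in any characteristic. Taking pro-$q$ parts with $q\neq p$ therefore yields $J(k_v)_q\cong J(\F_v)_q$; and $q\nmid d$ also forces $q\nmid\#J(\F_v)$, so $J(\F_v)_q=0$. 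Thus every factor vanishes, $\bigcap_n d^nJ(k_v)=0$, and consequently $\iota(P_v)=Q$ inside $J(k_v)$.

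To finish I would use that $\iota\colon C\to J$ is a closed immersion (the Abel--Jacobi embedding, up to translation by $z$). Then $\iota^{-1}(Q):=C\times_{\iota,J,Q}\Spec k$ is a closed subscheme of $\Spec k$; after base change to $k_v$ it contains the $k_v$-point $P_v$ (since $\iota(P_v)=Q$ over $k_v$), hence is non-empty, hence already equals $\Spec k$. So $Q=\iota(P)$ for some $P\in C(k)$, which contradicts $C(k)=\emptyset$. This establishes $C(\A_k)^{\Br C[d^\infty]}=\emptyset$.

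The step I expect to be the main obstacle is the local computation $\bigcap_{n\ge1}d^nJ(k_v)=0$: it requires handling the decomposition of the profinite group $J(k_v)$ into its pro-$q$ parts and, crucially, the pro-$p$-ness of the kernel of reduction $J_1(k_v)$ --- which must be justified by the filtration-by-$\F_v$-vector-spaces argument rather than by a $p$-adic logarithm, so that the function-field case is also covered --- together with the recognition that the extra factor $p$ in $d=p\cdot\#J(\F_v)$ is present precisely to annihilate the otherwise large pro-$p$ part $J_1(k_v)$. Everything else (finiteness of $J(k)$, the converse half of Corollary~\ref{cor:d-div}, and the faithfully flat descent placing $Q$ on the $k$-subscheme $\iota(C)$) is routine.
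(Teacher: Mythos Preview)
Your proof is correct and follows essentially the same approach as the paper's: assume a surviving adelic point, use Corollary~\ref{cor:d-div} and finiteness of $J(k)$ to find a fixed $Q\in J(k)$ with $\iota(P_v)-Q\in\bigcap_n d^nJ(k_v)$ at the distinguished place, conclude $\iota(P_v)=Q$, and deduce $Q\in\iota(C)(k)$. The paper compresses your local computation into the single phrase ``$d^nJ(k_v)$ converges to $0$ (by definition of $d$)'' and the descent step into ``$Q$ is in the image of $\iota$''; your expanded treatment of both points---the pro-$q$ decomposition of $J(k_v)$, the pro-$p$-ness of the kernel of reduction, and the closed-immersion fiber argument---is welcome detail rather than a different strategy.
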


    \begin{proof}[Proof of Proposition~\ref{prop:Rank0BMpSizeJac}]
	Since the index of $C$ is $1$, we may fix an embedding $\iota\colon C\hookrightarrow J$.  We will assume that $C(\A_k)^{\Br C[d^{\infty}]} \neq \emptyset$ for otherwise the theorem trivially holds.  Let $(P_v)\in C(\A_k)^{\Br C[d^{\infty}]}$. Since $\Sha(k, J)[d^{\infty}]$ is finite,  Corollary~\ref{cor:d-div} implies that for each $n\geq 1$ there exists a $Q_n\in J(k)$ such that $(\iota(P_v) - Q_n) \in d^nJ(\A_k)$.  Since $J(k)$ is finite, there is some $Q\in J(k)$ such that $Q = Q_n$ for infinitely many $n$.  As $n$ goes to $\infty$, $d^nJ(k_v)$ converges to $0$ (by definition of $d$), so $\iota(P_v)$ converges to $Q\in J(k)$.  Thus $Q$ is in the image of $\iota$, so $C(k)$ and $C(\A_k)^{\Br}$ are nonempty.
    \end{proof}

\begin{Remark}
	It would be very interesting to determine if the hypothesis in Proposition~\ref{prop:Rank0BMpSizeJac} that $J(k)$ is finite is necessary. 
\end{Remark}

\begin{Remark}
	If the index $I$ of $C$ is greater than $1$ and $\Sha(k,J)$ is finite, then $C$ has an $I$-primary Brauer-Manin obstruction to the existence of rational points. Indeed, if $C$ is not locally soluble, then this holds trivially. If $C$ is locally soluble, then for each $i$, $\Pic^i(C) = \Pic^i(\Cbar)^{G_k} = \Alb^i_C(k)$. By~\cite[Cor. 4.3]{CV17} we have that $\Alb^1_C$ has an $I$-primary obstruction. Hence the same is true for $C$ by functoriality of the Brauer-Manin pairing. Example~\ref{ex:index2} below shows that in this situation it is still possible that $C$ has $d$-primary Brauer-Manin obstructions for integers relatively prime to $I$.
\end{Remark}

Theorem~\ref{thm:GivenEInfinitelyManyd} would follow from Proposition~\ref{prop:Rank0BMpSizeJac} once one knows the existence of infinitely many places of good reduction such that $(\Char\F_v)\cdot(\#J(\F_v))$ are pairwise coprime.  We will instead use the following variant of Proposition~\ref{prop:Rank0BMpSizeJac}, which allows us to rely on the weaker statement that there exists an elliptic curve $E$ with infinitely many places of good reduction such that $\#E(\F_v)$ are pairwise coprime.

    \begin{Proposition}\label{prop:CoverOfA_BMSizeOfJac}
        Suppose $\pi\colon C \to A$ is a finite morphism from the curve $C$ to an abelian variety $A$ such that $\Sha(k,A)$ and $A(k)$ are both finite. {If there exists a prime $v_0$ of good reduction for $C$ and $A$ such that} 
        {$\pi(C(\F_{v_0}))\cap (A(k) \bmod v_0) = \emptyset$,}
        then $C(\A_k)^{\Br C[d^\infty]} = \emptyset$ where $d = \#A(\F_{v_0})$.
    \end{Proposition}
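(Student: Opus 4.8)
The plan is to follow the argument for Proposition~\ref{prop:Rank0BMpSizeJac}, with $A$ in place of $J$ and with the Brauer--Manin obstruction transported along $\pi$. Suppose $C(\A_k)^{\Br C[d^\infty]}\neq\emptyset$ (otherwise there is nothing to prove) and fix $(P_v)\in C(\A_k)^{\Br C[d^\infty]}$. The goal is to exhibit a point of $C(\F_{v_0})$ whose image under $\pi$ lies in $A(k)\bmod v_0$, contradicting the hypothesis.

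The first and main step is to transport the obstruction: I claim that $\big(\pi(P_v)\big)_v\in A(\A_k)^{\Br_{1/2,d^n}A}$ for every $n\ge 1$. By functoriality of the Brauer pairing, $\langle\pi(P_v),\alpha\rangle=\langle(P_v),\pi^*\alpha\rangle$ for $\alpha\in\Br A$, so it suffices to show that $\pi^*$ carries $\Br_{1/2,d^n}A$ into the subgroup of $\Br C$ orthogonal to $(P_v)$. Now $\pi^*$ is compatible with the Hochschild--Serre filtration: it carries $\Br_1A$ into $\Br_1C=\Br C$ (using $\Br\Cbar=0$), carries $\Br_0A$ into $\Br_0C$, and on the quotients induces the map $\HH^1(k,\Pic\Abar)\to\HH^1(k,\Pic\Cbar)$ arising from $\pi^*\colon\Pic\Abar\to\Pic\Cbar$. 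Since pullback preserves algebraic equivalence, $\pi^*$ restricts to a homomorphism $\hat A=\Pic^0_A\to\Pic^0_C=J$ and hence sends $\hat A[d^n]$ into $J[d^n]$; chasing this through the commutative diagram comparing the definitions of $\Br_{1/2,d^n}A$ and $\Br_{1/2,d^n}C$ shows that the class of $\pi^*\alpha$ in $\HH^1(k,\Pic\Cbar)$ lies in the image of $\HH^1(k,J[d^n])$, i.e.\ $\pi^*\alpha\in\Br_{1/2,d^n}C$ modulo constant algebras. As in the proof of Lemma~\ref{lem:iota}(3), and using that $\Pic^0\Cbar$ is divisible and $\HH^1(k,\Z)=0$, one has $\Br_{1/2,d^n}C=\Br C[d^n]$ modulo $\Br_0C$. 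Since constant algebras pair trivially with every adelic point and $(P_v)\in C(\A_k)^{\Br C[d^n]}$, we conclude $\langle(P_v),\pi^*\alpha\rangle=0$, proving the claim.

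Next I would extract divisibility on $A$. Since $\Sha(k,A)$ is finite, choose $s\ge1$ with $d^s\Sha(k,A)[d^\infty]=0$; applying Corollary~\ref{cor:AVdiv} with $n=s+1$ to $\big(\pi(P_v)\big)_v\in A(\A_k)^{\Br_{1/2,d^{s+1}}A}$ produces $Q\in A(k)$ with $\big(\pi(P_v)-Q\big)_v\in d\,A(\A_k)$. (Alternatively one obtains, for every $n$, some $Q_n\in A(k)$ with $\pi(P_v)-Q_n\in d^nA(\A_k)$, and then, using the finiteness of $A(k)$, a single $Q$ working for infinitely many $n$, exactly as in Proposition~\ref{prop:Rank0BMpSizeJac}.)

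Finally I would reduce modulo $v_0$. Since $v_0$ is of good reduction for both $C$ and $A$, the curve $C$ has a smooth proper model and $A$ an abelian scheme model over $\OO_{v_0}$, and by the N\'eron mapping property $\pi$ extends to a morphism of these models; hence reduction commutes with $\pi$, so $r_{v_0}\big(\pi(P_{v_0})\big)=\pi\big(r_{v_0}(P_{v_0})\big)\in\pi\big(C(\F_{v_0})\big)$, the last membership because $P_{v_0}$ reduces into $C(\F_{v_0})$ by properness of the model. Applying $r_{v_0}$ to the $v_0$-component of $\pi(P_v)-Q\in d\,A(\A_k)$ gives $r_{v_0}\big(\pi(P_{v_0})\big)-r_{v_0}(Q)\in d\,A(\F_{v_0})$, which vanishes because $d=\#A(\F_{v_0})$ annihilates $A(\F_{v_0})$. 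Thus $r_{v_0}\big(\pi(P_{v_0})\big)=r_{v_0}(Q)\in A(k)\bmod v_0$, so $\pi\big(C(\F_{v_0})\big)\cap\big(A(k)\bmod v_0\big)$ is nonempty, contradicting the hypothesis; hence $C(\A_k)^{\Br C[d^\infty]}=\emptyset$. I expect the first step --- controlling $\pi^*(\Br_{1/2,d^n}A)$ inside $\Br C$ modulo constant algebras --- to be the only real obstacle, the remaining steps being exactly the mechanism (divisibility from finiteness of $\Sha$, then reduction at a place whose point count kills the reduction) already used for Proposition~\ref{prop:Rank0BMpSizeJac}.
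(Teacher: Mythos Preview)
Your argument is correct and follows the same overall strategy as the paper: push $(P_v)$ to $A(\A_k)$, use Corollary~\ref{cor:AVdiv} together with finiteness of $\Sha(k,A)$ and $A(k)$ to find a global $Q$, and reduce at $v_0$ to get the contradiction. The reduction step and the use of $d=\#A(\F_{v_0})$ are exactly as in the paper.

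The one place where you work harder than necessary is your ``first and main step.'' You trace $\pi^*$ through the $\Br_{1/2,d^n}$ filtrations on both $A$ and $C$ and then argue that $\Br_{1/2,d^n}C$ agrees with $\Br C[d^n]$ modulo constants. The paper bypasses all of this with a one-line observation: since $\pi^*\colon \Br A \to \Br C$ is a group homomorphism, it carries $\Br A[d^\infty]$ into $\Br C[d^\infty]$, so functoriality of the pairing immediately gives $\pi(P_v)\in A(\A_k)^{\Br A[d^\infty]}$. Corollary~\ref{cor:AVdiv} then applies because $\Br_{1/2,d^n}A$ consists of classes that are $d^n$-torsion modulo $\Br_0 A$, and (using that $\Br A$ is torsion, so splits as $d$-primary plus prime-to-$d$) any such class may be represented by an element of $\Br A[d^\infty]$ up to a constant algebra. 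In particular, your worry that controlling $\pi^*(\Br_{1/2,d^n}A)$ might be ``the only real obstacle'' is unfounded; there is no obstacle there at all. Note also that your appeal to Lemma~\ref{lem:iota}(3) is slightly delicate since that lemma assumes a zero-cycle of degree~$1$ on $C$, which is not hypothesised here; your workaround via $\HH^1(k,\Z)=0$ is fine for the inclusion you actually need, but the direct torsion argument above avoids the issue entirely.
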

    \begin{proof}
        Suppose $(P_v) \in C(\A_k)^{\Br C[d^\infty]}$. Then $\pi(P_v) \in A(\A_k)^{\Br A[d^\infty]}$ and so, by Corollary~\ref{cor:AVdiv}, we see that for any positive integer $n$ there exists $Q_n \in A(k)$ and $(R_v^{(n)}) \in A(\A_k)$ such that $\pi(P_v) - Q_n = d^n(R_v^{(n)})$. Since $A(k)$ is finite, we may assume $Q_n = Q \in A(k)$ is constant. {Since $d = \#A(\F_{v_0})$, we see that the reduction of $P_{v_0}$ is {an $\F_{v_0}$-point on $C$ that maps to $Q \bmod v_0$, resulting in a contradiction.}}
    \end{proof}	
   
	\begin{Lemma}\label{lem:LocallySolvableCover}
        	Let $E$ be an elliptic curve over a global field $k$ of characteristic not $2$ with $E(k) = \{O\}$, let $\phi\colon E \to \PP^1 = \PP^1_{(x:z)}$ be a map given by $|2O|$ that sends $O$ to $\infty$, and let $S$ be a finite set of places.  If $q\in k$ is a sufficiently large, totally positive, principal prime that is congruent to $1$ modulo $8\OO_k$, then there exists a nice curve $C/k$ admitting a double cover $\pi\colon C \to E$ such that 
	        \begin{enumerate}
	            \item $\pi$ is branched over the points $\phi^*(V(x^2 - qz^2))$ and unramified at all other points, \label{cond:BranchLocus}
	            \item  $C(\A_k) \neq \emptyset$, and
	            \item $\kk(\pi^{-1}(O))$ is {equal to $k(\sqrt{p})$ for some totally positive principal prime $p$ congruent to $1$ modulo $8\OO_k$ with $p \notin S$}.\label{cond:NontrivialExtension}
	        \end{enumerate}
	        In particular, the set of places $v_0\in \Omega_k$ such that the hypotheses of Proposition~\ref{prop:CoverOfA_BMSizeOfJac} hold has Dirichlet density $1/2$.
    \end{Lemma}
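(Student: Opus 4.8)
The plan is to take $C$ to be a quadratic twist, by a carefully chosen prime element $p$, of the fibre product $E\times_{\PP^1}\{w^2=x^2-qz^2\}$, arranged so that the fibre of the double cover over $O$ becomes $\Spec k(\sqrt p)$; the concluding sentence then follows from the Chebotarev density theorem applied to $k(\sqrt p)/k$. Write $E\colon y^2=g(x)$ with $g$ a separable cubic and $\phi(x,y)=(x:1)$, so that $\phi(O)=\infty$. Once $q$ is large enough not to be the square of a root of $g$, the divisor $\phi^*(V(x^2-qz^2))$ consists of four distinct points, none equal to $O$, and for any non-square $p\in k^\times$ the affine equation $w^2=p(x^2-q)$ defines over $E$ a double cover $\pi_p\colon C_p\to E$ branched precisely over those four points; $C_p$ is a nice curve of genus $3$ (the branch divisor is reduced and $x^2-q$ is not a square in $\kbar(E)$). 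This gives (1). Since $x^2-q$ is a unit times a square in the completed local ring of $E$ at $O$, one has $\pi_p^{-1}(O)\cong\Spec k[\sqrt p]=\Spec k(\sqrt p)$, so (3) and the lemma reduce to exhibiting a prime element $p$ that is totally positive, congruent to $1$ modulo $8\OO_k$, not in $S$, and for which $C_p(\A_k)\neq\emptyset$.

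Local solubility I would get from a finite list of conditions on $p$. Whenever $p$ is a square in $k_v$ one has $\pi_p^{-1}(O)(k_v)\neq\emptyset$, hence $C_p(k_v)\neq\emptyset$. Let $\Sigma$ be the finite set of places that are archimedean, divide $2q$, the discriminant $\disc(g)$, or the resultant of $g(x)$ and $x^2-q$, or have residue field too small for the genus-$3$ Weil bound, together with the places in $S$. For $v\notin\Sigma\cup\{v_p\}$, where $v_p$ is the place dividing $p$, the curve $C_p$ has good reduction at $v$ and $\#\F_v$ is large enough that $C_p(\F_v)\neq\emptyset$ by the Weil bound, so $C_p(k_v)\neq\emptyset$ by Hensel's lemma. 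Taking $p$ totally positive and $\equiv1\pmod{8\OO_k}$ makes $p$ a square at every archimedean and dyadic place, and additionally requiring $p\equiv1\pmod{\mathfrak p_v}$ for each finite, non-dyadic $v\in\Sigma$ makes $p$ a square there as well; in all these cases $C_p(k_v)\neq\emptyset$. Finally, at $v_p$ the element $p$ is a uniformiser, so $\pi_p^{-1}(O)$ yields no point; but if $q$ and $g(\sqrt q)$ are both squares in $k_{v_p}$ (with $v_p$ not dividing the resultant above, so $g(\sqrt q)$ is a unit there), then $(\sqrt q,\sqrt{g(\sqrt q)})\in E(k_{v_p})$ lies under a $k_{v_p}$-rational ramification point of $\pi_p$, so $C_p(k_{v_p})\neq\emptyset$.

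Thus it remains to produce a prime element $p$, totally positive and $\equiv1\pmod{8\mathfrak n}$ where $\mathfrak n$ is the product of the finite non-dyadic primes of $\Sigma$, such that $(p)$ splits completely in $k(\sqrt q,\sqrt{g(\sqrt q)})$ (so that $q$ and $g(\sqrt q)$ are squares in $k_{v_p}$). A prime $\mathfrak p$ of $k$ that splits completely in the ray class field of modulus $8\mathfrak n$ and all real places has a totally positive generator $p\equiv1\pmod{8\mathfrak n}$; if $\mathfrak p$ also splits completely in the Galois closure $L/k$ of $k(\sqrt q,\sqrt{g(\sqrt q)})$, then it splits completely in $k(\sqrt q,\sqrt{g(\sqrt q)})$ as well. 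By the Chebotarev density theorem, infinitely many primes split completely in the compositum of these two fields; discarding the finitely many that lie in $S$, divide $2q\disc(g)$ or the resultant, or have residue field below the Weil bound produces the required $p$, and $C:=C_p$ satisfies (1)--(3). The heart of the argument is this bookkeeping -- in particular the observation that forcing $(p)$ to split in $L\supseteq k(\sqrt q)$ is exactly what controls $C_p$ at its one bad place $v_p$ -- and this is the step I expect to take the most care.

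For the concluding sentence: since $E(k)=\{O\}$, the hypothesis of Proposition~\ref{prop:CoverOfA_BMSizeOfJac} at a place $v_0$ asks that $v_0$ be of good reduction for $C$ and $E$ and that no $\F_{v_0}$-point of $C$ map to $O\bmod v_0$, i.e.\ that $\pi^{-1}(O)(\F_{v_0})=\emptyset$. For $v_0$ of good reduction for $C$ one has $v_0\nmid 2p$, so $\pi^{-1}(O)\cong\Spec k(\sqrt p)$ reduces to $\Spec\F_{v_0}[\sqrt{\overline p}]$, which has an $\F_{v_0}$-point exactly when $\overline p$ is a square in $\F_{v_0}$, i.e.\ when $v_0$ splits in $k(\sqrt p)/k$. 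Hence the relevant places are precisely those of good reduction (for $C$ and $E$) that are inert in $k(\sqrt p)/k$; as the places of bad reduction or of ramification in $k(\sqrt p)/k$ are finite in number, the Chebotarev density theorem applied to the quadratic extension $k(\sqrt p)/k$ shows that they have Dirichlet density $1/2$.
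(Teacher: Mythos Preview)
Your proof is correct and follows essentially the same approach as the paper: build $C$ as a quadratic twist by a prime $p$ of the double cover $w^2=x^2-q$ over $E$, handle almost all places by the Weil bound, force $p$ to be a local square at a finite list of bad places, and produce a $k_{v_p}$-rational branch point by making $p$ split in the field of definition of a ramification point. The only notable difference is at $v_q$: you impose $p\equiv 1\pmod q$ directly, whereas the paper instead deduces that $p$ is a square modulo $q$ from the product formula for the Hilbert symbol $(q,p)$, using that $q$ is already a square modulo $p$ (since $p$ splits in $k(\sqrt{q})$) together with the conditions $p,q\equiv 1\pmod{8\OO_k}$ and total positivity.
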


	{\begin{Remark}
		In the case that $k$ is a global function field, the conditions that $q$ is totally positive and $q \equiv 1 \bmod 8\calO_k$ hold trivially. Indeed, {the positivity condition vacuously holds since there are no archimedean places} and $8$ is either a unit or $0$.
	\end{Remark}}

    \begin{proof}
        A double cover $\pi\colon C\to E$ is determined, up to quadratic twist, by its branch locus. Let $\pi\colon C \to E$ be a double cover of $E$ branched over $\phi^*(V(x^2 - qz^2))$ with the fiber above $O$ split. For any $a\in k^{\times}/k^{\times2}$, let $\pi^a\colon C^a\to E$ be the associated twist.
        
        By Riemann-Hurwitz, $C^a$ has genus $3$.  Thus, by Hensel's Lemma and the Hasse-Weil bounds, $C^a(k_v)\neq \emptyset$ for all finite $v$ with $\#\F_v \geq 37$ and such that $(C^a)_{\F_v}$ is geometrically irreducible.  Recall that a double cover is geometrically reducible if and only if the base is geometrically reducible or the divisor defining the double cover (in the case of $\pi$, this is $\phi^*(V(x^2 - qz^2)) - 4O$) is twice a principal divisor. Therefore, the places $v$ where $(C^a)_{\F_v}$ is geometrically reducible are the places $v$ such that $E_{\F_v}$ is geometrically reducible, the places that ramify in $k(\sqrt{q})$, and the places that ramify in $k(\sqrt{a})$.  
        
        Let $p$ be a principal prime that is totally positive and congruent to $1$ modulo $8\calO_{k}$, that is a square modulo all places $v$ with $\#\F_v\leq 37$, that is a square modulo all places $v$ where $E \bmod v$ is geometrically reducible, that splits completely in $\kk(\phi^*(V(x^2 - qz^2)))$, and that is not in $S$.  Such a $p$ exists, because all but the last condition can be imposed by asking that $p$ split in a particular field extension, so Chebotarev's density theorem says that there are in fact infinitely many such $p$.  
        
        It is clear from the construction that $C^p$ satisfies conditions~\eqref{cond:BranchLocus} and~\eqref{cond:NontrivialExtension}.  We claim that for such $p$, we also have $C^p(\A_k) \neq \emptyset$.  As explained above, it suffices to check $C^p(k_v) \neq \emptyset$ for (i) $v\mid\infty$, (ii) $v$ with $\#\F_v\leq 37$, (iii) $v$ where $E_{\F_v}$ is geometrically reducible, (iv) places $v$ that ramify in $k(\sqrt{q})$ and (v) places that ramify in $k(\sqrt{p})$.  Since $p$ is a square for all places of type (i), (ii) or (iii) and $C(\A_k) \neq\emptyset$ (in fact, $C(k) \neq \emptyset$), $C^p(k_v)\neq\emptyset$ for places of this type.  Since $q$ and $p$ are totally positive principal primes that are congruent to $1 \bmod 8\calO_k$, the only places that ramify in $k(\sqrt{q})$ and $k(\sqrt{p})$ are $v_q$ and $v_p$.  Since $v_p$ splits completely in $\kk(\phi^*(V(x^2 - qz^2)))$, there is a $k_{v_p}$-point in the branch locus of $\pi^a$ for any twist $a\in k^{\times}/k^{\times2}$ and hence a $k_{v_p}$-point on $C^p$.  Furthermore, since $q$ is a square modulo $p$ and $q$ and $p$ are both totally positive primes and $1$ modulo $8\calO_k$, the product formula applied to the Hilbert symbol $(q,p)$ implies that $p$ is a square modulo $q$, so we also have that $C^p(k_{v_q}) \neq \emptyset$.  Hence, $C^p(\A_k) \neq \emptyset$ as desired.
    \end{proof}

    \begin{proof}[Proof of Theorem~\ref{thm:GivenEInfinitelyManyd}]
        Let $E/\Q$ be an elliptic curve with $E(\Q)$ trivial, $\Sha(\Q, E)$ finite and $\Gal(\Q(E[q])/\Q) \simeq \GL_2(\F_q)$ for every prime $q$. For example, one may take the curve with minimal Weierstrass equation $y^2 + y = x^3 + x^2 - 12x - 21$, \cite[\href{http://www.lmfdb.org/EllipticCurve/Q/67/a/1}{Elliptic Curve 67.a1}]{LMFDB}.

        By Serre's open image theorem \cite{Serre72}, there is a finite set of primes $S$ such that for all $q\notin S$, $\Q(E[q])$ is linearly disjoint from $\Q(E[n])$ for any integer $n$ with $q\nmid n$. Let $\pi \colon C \to E$ be a locally soluble double cover given by Lemma~\ref{lem:LocallySolvableCover}. Let $L = \Q(\sqrt{p})$ be the quadratic extension of $\Q$ given by the residue field of $\pi^{-1}(O)$. By Lemma~\ref{lem:LocallySolvableCover}, $p$ is a positive prime outside $S$ that is congruent to $1$ modulo $8$. By Proposition~\ref{prop:CoverOfA_BMSizeOfJac} $C(\A_{\Q})^{\Br C[d^\infty]} = \emptyset$ for $d = \#E(\F_v)$ where $v$ is any finite place in the set
        \[
           T := \{v\in \Omega_{\Q} \;:\; E_{\F_v} \textup{ smooth, and }O\notin \pi(C(\F_{v}))\}.
        \]
	
	    We will inductively construct a sequence of primes in $T$ such that the corresponding integers $\#E(\F_v)$ are pairwise coprime.  Since $p \equiv 1 \bmod 8$, $L$ is contained in $\Q(\zeta_p)$ and hence $\Q(E[p])$. Assume that there exist $v_1, \dots, v_i\in T$ such that the integers $\#E(\F_{v_j})$ are pairwise coprime and such that $p\nmid \#E(\F_{v_j})$ for all $j$. Note that this assumption is satisfied in the base case $i = 1$ by the Chebotarev density theorem, since $\Gal(\Q(E[p])/\Q) \simeq \GL_2(\F_p)$. Let $n$ be the radical of the product $\prod_{j=1}^i \#E(\F_{v_i})$.  By the definition of $S$ and the assumption that $p\nmid n$, the pair of fields $\Q(E[n])$ and $\Q(E[p])$ are linearly disjoint.  This also implies that $\Q(E[n])$ and $L$ are linearly disjoint.

	    Since the Galois representations on $E[q]$ are surjective for all primes $q \mid n$, there exists some element $\sigma\in\Gal(E[n]/E)$ that fixes no nontrivial point of $E[n]$. Furthermore, since $\Gal(\Q(E[p])/\Q) \simeq \GL_2(\F_p)$, there exists an element $\tau\in \Gal(\Q(E[p])/\Q)$ such that $\tau|_L$ is nontrivial and $\tau$ fixes no nontrivial point of $E[p]$. (It suffices that $1$ is not an eigenvalue {and} that the determinant is not a square.)  Thus, the Chebotarev density theorem applied to the compositum of $\Q(E[p])$ and $\Q(E[n])$ yields a finite place $v_{i+1}\in T$ (since $v_{i+1}$ is inert in $L$) such that $\#E(\F_{v_{i+1}})$ is relatively prime to $np$.
    \end{proof}

    \begin{Remarks}\hfill
        \begin{enumerate}

            \item We expect that most curves over arbitrary global fields satisfy the conclusions of Theorem \ref{thm:GivenEInfinitelyManyd}. On the other hand, in the next section, we will show that this is not the case for \emph{every} curve.

            \item Over $\Q$ one might even expect that we can take all $d$ to be prime in Theorem~\ref{thm:GivenEInfinitelyManyd} for suitable curves, which would follow, e.g., from a conjecture of Koblitz \cite{Koblitz} that there are many elliptic curves for which 
 $\#E(\F_v)$ is prime infinitely often.

            \item We can also obtain a very different example satisfying the conclusion of  Theorem~\ref{thm:GivenEInfinitelyManyd} by applying Lemma~\ref{lem:LocallySolvableCover} to $E: y^2=x^3-x+2$ over $k=\F_3(t)$. Then $E$ is defined over $\F_3$ and $\# E(\F_3) = 1$. Given any integer $n>1$, if $f = [\F_3(E[n]):\F_3]$ and $\ell$ is a prime number such that  $(\ell,f)=1$, then $(\# E(\F_{3^\ell}), n) =1$. So, by picking successively suitable $n,\ell,v$ we get a list of places inert in $L$ with $\# E(\F_v)$ pairwise coprime.
        \end{enumerate}
    \end{Remarks}

    \subsection{A hyperelliptic curve with an odd order Brauer-Manin obstruction}\label{sec:HyperellipticNoOddObstruction}
    We close this section with the following application of Proposition~\ref{prop:CoverOfA_BMSizeOfJac}, which gives a curve $C$ of index $2$ with odd order Brauer-Manin obstructions. This is particularly interesting because $C \subset \Alb^1_C$ and  the Brauer-Manin obstruction on $\Alb_C^1$ is completely captured by the $2$-primary subgroup (cf. \cite[Thm 1.2 and Cor. 4.5]{CV17}).

    \begin{Lemma}\label{lem:HypEx}
        Let $C/k$ be a hyperelliptic curve over a global field $k$ of characteristic not equal to $2$ with affine model $y^2 = f(x)$. Suppose $\Sha(k,J)$ is finite and $J(k) = \{O\}$ is trivial. For any prime $v$ of good reduction for $J$ such that $f(x)$ has no linear factor mod $v$ we have $C(\A_k)^{\Br C[d^\infty]} = \emptyset$ where $d = \#J(\F_v)$.
    \end{Lemma}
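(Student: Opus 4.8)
The plan is to apply Proposition~\ref{prop:CoverOfA_BMSizeOfJac} to a finite morphism from $C$ to its Jacobian coming from the hyperelliptic structure. Let $w\colon C\to C$ denote the hyperelliptic involution $(x,y)\mapsto(x,-y)$ and define $\delta\colon C\to J$ by $\delta(P)=[P-w(P)]$; since the divisor $P-w(P)$ has degree $0$ this is a morphism of $k$-varieties to $J=\Pic^0_C$. I would first record that $\delta$ is finite. Over $\Cbar$ one has $\delta(P)=O$ exactly when $P\sim w(P)$, and since $C$ is not rational this forces $P=w(P)$, i.e.\ $P$ is a fixed point of $w$; hence $\delta^{-1}(O)$ is the finite set of Weierstrass points, and in particular $\delta$ is non-constant. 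A non-constant morphism from a smooth projective curve has closed, one-dimensional image, so $\delta$ factors as $C\to\delta(C)\hookrightarrow J$ with the first map finite (a dominant morphism of integral projective curves) and the second a closed immersion; thus $\delta$ is finite. Note also that $\Sha(k,J)$ is finite by hypothesis and $J(k)=\{O\}$ is finite, so the standing hypotheses of Proposition~\ref{prop:CoverOfA_BMSizeOfJac} are met with $A:=J$.

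Throughout I take $\deg f$ to be even, so that $w$ has no fixed point over $\infty$ (when $\deg f$ is odd the point at infinity is a rational Weierstrass point, so $C(k)\neq\emptyset$ and the conclusion fails; the curves to which this lemma is applied have even-degree models). The crux is the verification that, for the prime $v$ of the statement, $\delta(C(\F_v))\cap\bigl(J(k)\bmod v\bigr)=\emptyset$, equivalently $O\notin\delta(C(\F_v))$ since $J(k)=\{O\}$. (Here $\delta$ extends to a morphism of the smooth models over the local ring at $v$, so it commutes with reduction; for $v$ of good reduction, $C_{\F_v}\colon y^2=\bar f$ is a smooth hyperelliptic curve over $\F_v$.) Running the computation of the previous paragraph over $\F_v$: if $P\in C(\F_v)$ has $\delta(P)=O$ then $P$ is a Weierstrass point of $C_{\F_v}$, and as $\deg f$ is even this Weierstrass point is affine, of the form $(\alpha,0)$ with $\alpha\in\F_v$ a root of $\bar f$. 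Such an $\alpha$ would give a linear factor of $\bar f$ over $\F_v$, contrary to hypothesis. Hence no such $P$ exists.

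Finally I would invoke Proposition~\ref{prop:CoverOfA_BMSizeOfJac} for $\pi=\delta\colon C\to J$, $v_0=v$, and $d=\#J(\F_v)$, obtaining $C(\A_k)^{\Br C[d^\infty]}=\emptyset$, as desired. The main point requiring care is that $\delta$ is not an embedding --- every Weierstrass point maps to $O\in J$ --- so one genuinely uses that Proposition~\ref{prop:CoverOfA_BMSizeOfJac} only requires a finite morphism, not a closed immersion. The remainder is bookkeeping, the essential input being that ``good reduction together with $\bar f$ having no linear factor'' is precisely the condition forcing $C_{\F_v}$ to carry no $\F_v$-rational Weierstrass point.
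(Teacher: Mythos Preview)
Your proof is correct and is essentially the same as the paper's. The paper defines $\pi\colon C\to J$ by $\pi(P)=[2P-(\infty^++\infty^-)]$ (factoring through $\Alb^1_C$), but since $P+w(P)\sim\infty^++\infty^-$ in the hyperelliptic $g^1_2$, one has $P-w(P)\sim 2P-(\infty^++\infty^-)$, so your map $\delta$ coincides with the paper's $\pi$; both then identify the fiber over $O$ with the Weierstrass locus and invoke Proposition~\ref{prop:CoverOfA_BMSizeOfJac}. Your explicit remark that $\deg f$ must be even (lest the rational point at infinity force $C(k)\neq\emptyset$ and falsify the conclusion) is a useful clarification that the paper leaves implicit in its reference to the two points $\infty^\pm$.
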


    \begin{proof}
        There is a map $\phi\colon \Alb^1_C \to \Alb^0_C = J$ defined by sending the class of a degree $1$ zero-cycle $z$ to the class of $2z - (\infty^++\infty^-)$, where $\infty^{\pm}$ are the points at infinity on the affine model defining $C$. Let $\pi \colon C \to J$ be the composition of $\phi$ with the canonical map $C \to \Alb^1_C$. The result follows from Proposition~\ref{prop:CoverOfA_BMSizeOfJac} applied to the map $\pi$, since the fiber above $O$ is defined by $f(x) = 0$.
    \end{proof}

    \begin{Example}\label{ex:index2}
        The hyperelliptic curve $C/\Q$ defined by $y^2 = 7(x^6 + 2x^4 + x^2 + 2x + 2)$ has index $2$, $C(\A_\Q) \ne \emptyset$, and $C(\A_\Q)^{\Br C[\ell^\infty]} = \emptyset$ for $\ell = 3,5,7$ and possibly infinitely many other primes. 
    \end{Example}

    \begin{proof}
        To ease notation write $J^1 := \Alb^1_C$. Using Magma one can perform a $2$-descent on $J$ to determine that $\operatorname{Sel}^2(k,J) \simeq \Z/2\times \Z/2$. A further $2$-descent on $J^1$ as described in \cite{CreutzANTSX} gives that $J^1(\Q) = \emptyset$ and $\Sha(k,J)[2] \simeq \Z/2\times \Z/2$. Since $C$ is locally soluble, the fact that $J^1(\Q) = \emptyset$ proves that $C$ has no $0$-cycle of degree $1$, so $C$ has index $2$. $J$ has good reduction at $v = 3,5$ and $17$ where the orders of $J(\F_v)$ are $9$, $25$ and $343$, respectively. This shows that $J(\Q)$ is trivial. So Lemma~\ref{lem:HypEx} gives that there is a $d$-primary Brauer-Manin obstruction for every $d = \#J(\F_v)$ such that $f(x)$ has no linear factor modulo these primes. This applies to $v = 3,5,17$.
    \end{proof}

\section{Degrees do not capture the Brauer-Manin obstruction on curves}

    In this section we construct examples of pointless curves over global fields that have index $1$ and such that there are primes $\ell$ such that $\ell$-primary subgroup of the Brauer group does not obstruct the existence of rational points. For any $\ell$, a curve of index $1$ has projective embeddings of degree $\ell^m$ provided $m$ is sufficiently large. So these examples show, in particular, that degrees do not capture the Brauer-Manin obstruction to the existence of rational points on curves in the sense of \cite[Question 1.1]{CV17}.

    \begin{Theorem}\label{thm:Brodd}
        There exists a genus $3$ curve $C/\Q$ with index $1$ such that $C(\A_\Q)^{(\Br C)_{\textup{odd}}} \ne \emptyset$ and $C(\A_\Q)^{\Br C[2]} = \emptyset$.
    \end{Theorem}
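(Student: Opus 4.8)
The plan is to exhibit $C$ explicitly and verify the two halves of the statement separately. First I would search for a genus $3$ curve $C/\Q$ which (i) is everywhere locally soluble, (ii) has no rational point but does carry $\Q$-rational effective divisors of coprime degrees — since $g=3$, Riemann--Roch forces a closed point of degree $3$ once the index is $1$, and generically one of degree prime to $3$ as well, so ``index $1$ and $C(\Q)=\emptyset$'' is the natural combination — and (iii) has Jacobian $J$ isogenous over $\Q$ to a product of elliptic curves (e.g. $E_1\times E_2\times E_3$ with each $E_i$ of analytic rank $1$ and trivial Tate--Shafarevich group). Good candidates are double covers of an elliptic curve (as in the construction underlying Lemma~\ref{lem:LocallySolvableCover}) or explicit plane quartics; local solubility is checked at the bad primes and supplied by the Hasse--Weil bounds elsewhere, and the arithmetic of $J$ (rank $3=g$, $\Sha(\Q,J)$ finite with trivial odd part, by Kolyvagin--Kato and BSD for the $E_i$) is read off from a $2$-descent and point searches, exactly as in Example~\ref{ex:index2}.

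For the $2$-primary half, Corollary~\ref{cor:curved-ab} with $d=2$ gives $C(\A_\Q)^{\Br C[2]}=C(\A_\Q)^{2-\textup{ab}}$, so it suffices to show that every $2$-covering of $C$ is everywhere locally insoluble; fixing a zero-cycle $z$ of degree $1$ and the embedding $\iota\colon C\hookrightarrow J$, this is the assertion $\iota(C(\A_\Q))\cap\bigl(J(\Q)+2J(\A_\Q)\bigr)=\emptyset$, a finite computation carried out by a $2$-descent on $C$ (or on $J$, twisted appropriately) as in the proof of Example~\ref{ex:index2}. The curve is engineered so that this descent obstruction is nontrivial.

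For the odd half I would use the index. Let $x_1,\dots,x_t$ be closed points of degrees $d_1,\dots,d_t$ with $\gcd_i d_i=1$ and all $d_i>1$, and let $F_i$ be the residue field of $x_i$. For $(P_v)\in C(\A_\Q)$ and $\alpha\in\Br C$ of odd order, functoriality of the Brauer pairing together with $\operatorname{inv}_w\circ\res_{F_{i,w}/\Q_v}=[F_{i,w}:\Q_v]\cdot\operatorname{inv}_v$ gives $d_i\langle (P_v),\alpha\rangle_\Q=\langle (P_v)|_{F_i},\res_{F_i}\alpha\rangle_{F_i}$, so if each base change $(P_v)|_{F_i}$ avoids the odd-order Brauer--Manin obstruction over $F_i$ then $\langle(P_v),\alpha\rangle_\Q=0$, i.e. $(P_v)\in C(\A_\Q)^{(\Br C)_{\textup{odd}}}$. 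Thus it is enough to produce a single $(P_v)\in C(\A_\Q)$ whose base changes to the $F_i$ have this property. Over each $F_i$ one still has $\Sha(F_i,J)$ finite with trivial odd part and $J(F_i)$ of rank at least $g$, so Corollary~\ref{cor:d-div} identifies $C_{F_i}(\A_{F_i})^{(\Br C_{F_i})_{\textup{odd}}}$ with the preimage under $\iota$ of the closure of $J(F_i)$ in the prime-to-$2$ topology on $J(\A_{F_i})$; since a subgroup of rank $\ge g$ is $\mathfrak p$-adically of full rank at all but finitely many $\mathfrak p$, this closure contains an open subgroup, and one selects $P_v\in C(\Q_v)$ at each place landing in the (open, finite-index) intersection of the relevant subgroups, checking the finitely many bad places by hand and patching. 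The same local analysis applied to the odd-degree \'etale covers of $C$ — which stay pointless because $C(\Q)=\emptyset$ — then yields the tower of Hasse-principle counterexamples of Remark~\ref{rem:towers}. The hard part is precisely this last step: one must control $\Jac C$ well enough (finiteness and triviality of its odd Tate--Shafarevich group, its Mordell--Weil rank, and the $p$-adic distribution of its rational points) to force the odd-primary Brauer--Manin set nonempty while keeping the $2$-primary set empty, and realizing both constraints in a single explicit curve is the crux of the construction.
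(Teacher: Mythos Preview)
Your treatment of the $2$-primary half is correct and matches the paper: one uses Corollary~\ref{cor:curved-ab} to identify $C(\A_\Q)^{\Br C[2]}$ with the $2$-descent set and then runs an explicit $2$-descent. The gap is in the odd half. Your restriction--corestriction identity is fine, but the step ``since a subgroup of rank $\ge g$ is $\mathfrak p$-adically of full rank at all but finitely many $\mathfrak p$, this closure contains an open subgroup'' is not justified: a rank-$g$ subgroup of $J(F_i)$ need not have open closure in $J(\A_{F_i})_\bullet$ (or even in a single $J(F_{i,\mathfrak p})$), and even when it does, $\iota(C(\A_\Q))$ is a curve inside a $g$-dimensional space, so there is no reason it should meet a prescribed coset of an open subgroup simultaneously for all $i$. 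You acknowledge this yourself in the last sentence: the ``crux of the construction'' is left undone, so as written this is a plan rather than a proof.

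The paper sidesteps all of this with a single structural observation you have missed: take $C$ \emph{hyperelliptic}, so that its Weierstrass locus is a $2$-torsion packet (Proposition~\ref{lem:torsionpacket2}). Any adelic point supported on Weierstrass points then maps under $\iota$ into $J(\A_\Q)[2]\subset d^\infty J(\A_\Q)$ for every odd $d$, hence lies in $C(\A_\Q)^{(\Br C)_{\textup{odd}}}$ by Corollary~\ref{cor:d-div} --- no control on $\Sha$, ranks, or density is needed. Concretely, set $f(x)=(x^2+3)(x^3-19)$, so that $T=\{f=0\}$ is everywhere locally soluble but has no rational point and supports a zero-cycle of degree $1$; then for \emph{any} squarefree $g$ coprime to $f$ the curve $y^2=f(x)g(x)$ has $T(\A_\Q)\subset C(\A_\Q)^{(\Br C)_{\textup{odd}}}\ne\emptyset$, and one chooses $g(x)=2(x^3+x+1)$ so that the $2$-descent kills everything. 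This also explains Remark~\ref{rem:towers} immediately: the adelic $2$-torsion point survives pullback under any odd-degree isogeny.
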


    \begin{Theorem}\label{thm:Brell}
        For any prime number $\ell$, there exists a global field $k$ and a genus $2$ curve $C/k$ of index $1$ such that $C(\A_k)^{\Br C[\ell^\infty]} \ne \emptyset$ and $C(\A_k)^{\Br} = \emptyset$.
    \end{Theorem}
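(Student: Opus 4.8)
The plan is to realize $C$ as a \emph{constant} curve over a global function field of characteristic $\ell$; the point of working in characteristic $\ell$ is that, by imposing a $p$-rank condition on the Jacobian, one can force the $\ell$-primary reduction data to carry no information whatsoever, so that Proposition~\ref{prop:PSconverse}(2) hands us $C(\A_k)^{\Br C[\ell^\infty]}\ne\emptyset$ essentially for free. Concretely, fix a power $q$ of $\ell$ and seek a genus $2$ curve $C_0/\F_q$ with $C_0(\F_q)=\emptyset$ whose Jacobian $J_0:=\Jac(C_0)$ is supersingular, or at least has $p$-rank $0$, i.e. $J_0[\ell](\Fbar_q)=0$; then $J_0[\ell^n](\Fbar_q)=0$ for all $n$, hence $J_0(\F_{q^d})[\ell^\infty]=0$ for every $d\ge 1$. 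Also choose a smooth projective geometrically integral curve $X/\F_q$ with $X(\F_q)=\emptyset$ (so that $k:=\F_q(X)$ has no place of degree $1$) subject to two constraints: (i) every degree $d$ occurring among the places of $k$ is one for which $C_0(\F_{q^d})\ne\emptyset$ — this is automatic for all large $d$ by the Hasse--Weil bounds, so only finitely many small $d$ must be handled, either by building extra points into $C_0$ over the relevant extensions or by forbidding those place-degrees on $X$; and (ii) $\Jac(X)$ has no isogeny factor in common with $J_0$ over $\Fbar_q$, so that by the Lang--N\'eron theorem $J_0(k)=J_0(\F_q)$ is finite, in particular not Zariski dense in $J_0$. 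One is free to enlarge $q$ through powers of $\ell$ to make all of this possible.

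Now set $C:=C_0\times_{\F_q}k$, a constant genus $2$ curve with Jacobian $J=J_0\times_{\F_q}k$. Then $C$ has index $1$: $C_0$ does by F.\,K.\ Schmidt's theorem, and a degree $1$ zero-cycle on $C_0$ remains one after base change (we use such a $z$ to define $\iota\colon C\hookrightarrow J$). Condition (i) together with Hensel's lemma gives $C(k_v)\ne\emptyset$ for every place $v$, since the residue field is $\F_{q^{\deg v}}$ and $C_0$ has an $\F_{q^{\deg v}}$-point; hence $C(\A_k)\ne\emptyset$. Since $C_0(\F_q)=\emptyset$ and $J_0(k)$ is not Zariski dense in $J_0$, Theorem~\ref{thm:constant} gives $C(\A_k)^{\Br}=\emptyset$, so there genuinely is a Brauer--Manin obstruction.

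Finally, for every place $v$ we have $J(\F_v)[\ell^\infty]=J_0(\F_{q^{\deg v}})[\ell^\infty]=0$, so $\prod_v J(\F_v)[\ell^\infty]$ is the trivial group; consequently the $\ell$-primary Poonen--Scharaschkin set $C(\A_k)^{\PS[\ell^\infty]}$ equals all of $C(\A_k)$, which is nonempty. As $C$ is a constant curve and $\Sha(k,J_0)[\ell^\infty]$ is finite (finiteness of Tate--Shafarevich groups of constant abelian varieties, as used in the proof of Theorem~\ref{thm:constant}), the contrapositive of Proposition~\ref{prop:PSconverse}(2) yields $C(\A_k)^{\Br C[\ell^\infty]}\ne\emptyset$. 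Together with the previous paragraph this is exactly the assertion.

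The step I expect to be the real work is the construction in the first paragraph: producing, for \emph{every} prime $\ell$, a pointless genus $2$ curve over a finite field of characteristic $\ell$ whose Jacobian has $p$-rank $0$, together with a compatible pointless auxiliary curve $X$ whose set of place-degrees avoids the finitely many $d$ with $C_0(\F_{q^d})=\emptyset$ and whose Jacobian is independent of $J_0$. Supersingular genus $2$ curves exist in every characteristic and twisting provides pointlessness, so for $\ell\ge 5$ this should be routine; for $\ell=2,3$ the base field $\F_\ell$ is small and the Weil bounds are weak, so one likely has to pass to a proper extension $\F_q$ or make an ad hoc choice, after which everything reduces to a finite check. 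One should also confirm that the hypotheses of Theorem~\ref{thm:constant} and Proposition~\ref{prop:PSconverse} truly require only the inputs arranged above — which is precisely where restricting to constant curves is essential, since for a general curve the $\ell$-adic divisibility of local points in $J(k_v)$ is not controlled by the reduction alone.
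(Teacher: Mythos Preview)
Your idea of working in characteristic $\ell$ with a $p$-rank $0$ Jacobian is elegant: it makes $J(\F_v)[\ell^\infty]$ vanish identically, so Proposition~\ref{prop:PSconverse}(2) would hand you $C(\A_k)^{\Br C[\ell^\infty]} \ne \emptyset$ for free. But there is a fatal obstruction once $\ell \ge 17$. A constant curve $C = C_0 \times_{\F_q} k$ with $C(\A_k)^{\Br} = \emptyset$ must in particular have $C(k) = \emptyset$, hence $C_0(\F_q) = \emptyset$ since $\F_q \subset k$. The Weil bound $\#C_0(\F_q) \ge q + 1 - 4\sqrt{q}$ for a genus $2$ curve gives $\#C_0(\F_q) \ge 1$ whenever $q \ge 16$. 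Since you insist that $q$ be a power of $\ell$, this is impossible for every $\ell \ge 17$, and enlarging $q$ only makes it worse. So the construction you describe as ``the real work'' is not merely delicate for large $\ell$ --- it is impossible, and the theorem is asserted for \emph{every} prime.

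The paper sidesteps this by fixing a single small base field $\F_3$ (where a pointless genus $2$ curve $C_0$ does exist) and treating all $\ell$ uniformly by a different mechanism. Rather than killing $J(\F_v)[\ell^\infty]$, Lemma~\ref{lem:finite-field} produces, for every sufficiently large $n$, a point of $C_0(\F_{3^n})$ whose order in $J_0$ is prime to $\ell$; one then chooses the base curve $D_m$ so that $k = \F_3(D_m)$ has no places of the finitely many bad degrees (Proposition~\ref{prop:Brell}). The resulting adelic point is infinitely $\ell$-divisible in $J(\A_k)$, and Corollary~\ref{cor:d-div} places it in $C(\A_k)^{\Br C[\ell^\infty]}$ directly, without going through Proposition~\ref{prop:PSconverse}. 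To make Theorem~\ref{thm:constant} applicable, $J_0$ is arranged to have both an ordinary and a supersingular elliptic factor while $\Jac(D_m)$ is ordinary, forcing $J(k)$ into a coset of a proper abelian subvariety.
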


    \begin{Remark}
        In the case $\ell = 2$, this second example shows that even the canonical degree does not capture the Brauer-Manin obstruction to the existence of rational points on curves.
    \end{Remark}

	\begin{Remark}\label{rem:towers}
		If $C(\A_k)^{\Br C[\ell^\infty]} \ne \emptyset$, then by Corollary~\ref{cor:curved-ab} there is, for every $n \ge 1$, an adelic point of $C$ which lifts to an $\ell^n$-covering of $C$. If $C(k) = \emptyset$, these coverings are counterexamples to the Hasse principle. For the curve $C$ constructed in the proof of Theorem~\ref{thm:Brodd}, the pullback of multiplication by any odd integer $n$ on $J$ (with no twisting) gives a covering which is a counterexample to the Hasse principle. Thus one has an infinite tower of non-isomorphic abelian \'etale coverings of $C$ in which all covers are counterexamples to the Hasse principle.
	\end{Remark}

    The idea behind these constructions is to find curves embedded in their Jacobians so that they contain, everywhere locally, nontrivial torsion points of order prime to $\ell$. This gives adelic points that are $\ell^\infty$-divisible, so by the results in Section~\ref{subsec:curves} this gives points in $C(\A_k)^{\Br C[\ell^\infty]}$.  Section~\ref{subsec:AdelicTorsion} contains the core of this idea and the proof of Theorem~\ref{thm:Brodd} and Section~\ref{subsec:ConstantAndBrell} focuses on the case of constant curves, which is used to prove Theorem~\ref{thm:Brell} therein.

    \subsection{Adelic torsion packets}\label{subsec:AdelicTorsion}
    Let $C$ be a nice curve over a global field $k$. Define an equivalence relation on $C(\kbar)$ by declaring that $x \sim y$ if some multiple of the divisor $x-y$ is principal. The equivalence classes under this relation are called \defi{torsion packets}. Let us say that a subset $T$ of some torsion packet is an \defi{$n$-torsion packet} if $n(t-s)$ is principal for all $t,s \in T$. 

    \begin{Proposition}\label{lem:torsionpacket2}
        Suppose there is an $n$-torsion packet $T$ on $C$ and there is a zero-cycle of degree $1$ supported on $T$. Then $T(\A_k) \subset C(\A_k)^{\Br C[d^\infty]}$ for any $d$ relatively prime to $n$.
    \end{Proposition}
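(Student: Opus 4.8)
The plan is to reduce the statement to Corollary~\ref{cor:d-div} by showing that the embedding $\iota\colon C\hookrightarrow J$ attached to the given degree-one zero-cycle carries the $n$-torsion packet $T$ into the $n$-torsion subgroup $J[n]$. Granting that, every component of an adelic point in $T(\A_k)$ is an $n$-torsion point of $J$, which is automatically infinitely $d$-divisible whenever $\gcd(d,n)=1$, and the conclusion drops out of Corollary~\ref{cor:d-div} with the trivial choice $Q_m=0$.

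In detail: write the hypothesized degree-one zero-cycle supported on $T$ as $z=\sum_i m_i t_i$ with $t_i\in T$ and $\sum_i m_i=1$, and use $z$ to define $\iota\colon C\to J$, $x\mapsto[x-z]$, as in Section~\ref{subsec:curves}. For any $t\in T$ one has, in $\Pic\Cbar$,
\[
\iota(t)=[t-z]=\Bigl[\sum_i m_i(t-t_i)\Bigr]=\sum_i m_i\,[t-t_i],
\]
using $\sum_i m_i=1$ to rewrite $t-z=\sum_i m_i(t-t_i)$ as divisors. Since $T$ is an $n$-torsion packet, $n(t-t_i)$ is principal for every $i$, so each $[t-t_i]$ lies in $J[n](\kbar)$ and hence $\iota(t)\in J[n](\kbar)$. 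The same computation is valid after base change to any completion $k_v$, so a $k_v$-point $P_v$ lying in $T$ satisfies $\iota(P_v)\in J[n](\overline{k_v})\cap J(k_v)=J[n](k_v)$.

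Now take $(P_v)\in T(\A_k)$; by the previous paragraph $\iota\bigl((P_v)\bigr)=\bigl(\iota(P_v)\bigr)$ is an adelic point of $J$ annihilated by $n$. Fix $m\ge 1$. Since $\gcd(d^m,n)=1$ we may choose $a\in\Z$ with $ad^m\equiv 1\pmod n$, and then $d^m\bigl(a\,\iota((P_v))\bigr)=(ad^m)\,\iota((P_v))=\iota((P_v))$, so $\iota((P_v))\in d^mJ(\A_k)$. Applying Corollary~\ref{cor:d-div} with $Q_m=0\in J(k)$ for every $m\ge 1$ yields $(P_v)\in C(\A_k)^{\Br C[d^\infty]}$; when $d$ is supernatural one runs this argument over each integer divisor $d_0\mid d$, every one of which is coprime to $n$.

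The argument is essentially a short bookkeeping exercise once Corollary~\ref{cor:d-div} is available — note in particular that the "divisibility implies Brauer-Manin" direction of that corollary requires no finiteness hypothesis on $\Sha$. The one point that calls for a little care is the compatibility of $\iota$ with base change to the completions: one must check that a local point lying in the \emph{geometric} torsion packet $T$ really maps into $J[n](k_v)$, and not merely into some $J[n]$-coset. This is exactly the role played by the hypothesis that the degree-one zero-cycle be supported on $T$: it is what pins $\iota(T)$ inside $J[n]$ rather than inside a translate $\iota(t_0)+J[n]$.
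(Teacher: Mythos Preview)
Your proof is correct and follows the same approach as the paper: use the degree-one zero-cycle on $T$ to define $\iota$, show $\iota(T)\subset J[n]$ by decomposing $t-z$ as a sum of differences of elements of $T$, observe that $n$-torsion is infinitely $d$-divisible when $\gcd(d,n)=1$, and conclude via Corollary~\ref{cor:d-div} with $Q_m=0$. Your version is simply more explicit, spelling out the B\'ezout step and the passage to local points, where the paper compresses these into the single clause ``$\iota(Q_v)\in J(\A_k)[n]\subset d^\infty J(\A_k)$''.
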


    \begin{proof}
    	{Let $z$ be a $0$-cycle of degree $1$ supported on $T$ and let $\iota:C \to J$ be the corresponding embedding given by $x \mapsto [x-z]$.} For $t \in T(\kbar)$, $\iota(t) = t - z$ is a sum of differences of pairs $t_i - t_j$ with $t_i,t_j \in T(\kbar)$. Each difference $t_i - t_j$ lies in $J[n]$ since $T$ is an $n$-torsion packet. So $\iota(t) \in J[n]$. Thus, if $(Q_v) \in T(\A_k)$, then $\iota(Q_v) \in J(\A_k)[n] \subset d^\infty J(\A_k)$. Corollary~\ref{cor:d-div} then shows that $(Q_v) \in C(\A_k)^{\Br C[d^\infty]}$.
    \end{proof}

    \begin{proof}[Proof of Theorem \ref{thm:Brodd}]
        {Let $T$ be the $0$-dimensional subscheme of $\mathbb{A}^1_{\Q}$ that is given by the vanishing of $f(x) = (x^2+3)(x^3-19)$; note that $T$ is a counterexample to the Hasse principle.}
    For any square free polynomial $g(x) \in \Q[x]$ relatively prime to $f(x)$, the set of Weierstrass points of the hyperelliptic curve $C : y^2 = f(x)g(x)$ is a $2$-torsion packet containing $T$ and the support of a zero-cycle of degree $1$. By Proposition~\ref{lem:torsionpacket2} we have that $T(\A_{\Q}) \subset C(\A_{\Q})^{(\Br C)_{\textup{odd}}}$ and so $C(\A_{\Q})^{(\Br C)_{\textup{odd}}} \ne \emptyset$. It remains to show that $g(x)$ can be chosen so that $C(\A_{\Q})^{\Br C[2]}= \emptyset$. For this one can take $g(x) = 2(x^3 + x + 1)$. A $2$-descent (implemented in Magma based on \cite{BruinStoll}) shows that $C(\Q) = \emptyset$. Moreover, by Corollary~\ref{cor:curved-ab}, we have that the set $C(\A_{\Q})^{2-\textup{ab}}$ of adelic points on $C$ that survive a $2$-descent is equal to set cut out by $\Br C[2]$. Hence $C(\A_{\Q})^{(\Br C)[2]}= \emptyset$ as desired.
    \end{proof}

	\subsection{Constant curves over function fields}\label{subsec:ConstantAndBrell}	

		For constant curves over global function fields the existence of local torsion points is guaranteed for all but finitely many primes. The following lemma ensures the existence of local torsion points of order prime to $\ell$, for any prime $\ell$. 

		\begin{Lemma}
		\label{lem:finite-field}
		Let $C_0/\F_q$ be a curve of genus $g>1$ embedded in its Jacobian $J_0$. Let $\ell$ be a prime number and
		write $\# J_0(\F_{q^n}) = \ell^{r_n} m_n, \ell \nmid m_n$.  If $q^n > (2g\ell^{r_n})^2$, then there exists a point 
		in $C_0(\F_{q^n})$ of order prime to $\ell$ in $J_0$. Moreover, the inequality $q^n > (2g\ell^{r_n})^2$
		will hold for all $n$ sufficiently large.
		\end{Lemma}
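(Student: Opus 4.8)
The plan is to estimate, for each $n$, the number
\[
    N_n \;:=\; \#\bigl\{\, P \in C_0(\F_{q^n}) \;:\; \iota(P) \in J_0(\F_{q^n})[\ell^\perp] \,\bigr\}
\]
of points whose image under the embedding $\iota\colon C_0\hookrightarrow J_0$ has order prime to $\ell$, and to show $N_n\ge 1$ whenever $q^n>(2g\ell^{r_n})^2$. Write the embedding as $\iota(P)=[P]-D$ for a fixed degree-$1$ divisor class $D$, put $H_n:=J_0(\F_{q^n})[\ell^\perp]$, and note that $G_n:=J_0(\F_{q^n})/H_n\cong J_0(\F_{q^n})[\ell^\infty]$ is a finite abelian $\ell$-group of order $\ell^{r_n}$; let $\chi_n\colon J_0(\F_{q^n})\to G_n$ be the quotient map. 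By orthogonality of the characters of $G_n$,
\[
    N_n \;=\; \#\{P : \chi_n(\iota(P))=0\} \;=\; \frac{1}{\ell^{r_n}}\sum_{\psi\in\widehat{G_n}}\;\sum_{P\in C_0(\F_{q^n})}\psi\bigl(\chi_n(\iota(P))\bigr),
\]
where the term $\psi=1$ contributes $\#C_0(\F_{q^n})$.

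The key input is a Weil estimate for the remaining $\ell^{r_n}-1$ character sums: for every nontrivial character $\lambda$ of the finite group $J_0(\F_{q^n})$,
\[
    \left|\; \sum_{P\in C_0(\F_{q^n})}\lambda(\iota(P)) \;\right| \;\le\; 2g\sqrt{q^n}.
\]
I would derive this from the Riemann hypothesis for curves over finite fields: since $\iota(P)=[P]-D$, the character $\lambda$ extends to a character of $\Pic(C_{0,\F_{q^n}})(\F_{q^n})$ which, being nontrivial on $\Pic^0$, corresponds by geometric class field theory to a nontrivial everywhere-unramified abelian cover of $C_{0,\F_{q^n}}$ — a finite \'etale cover whether or not $\ell=\Char\F_q$ — whose associated $L$-function is a polynomial of degree $2g-2$ all of whose inverse roots have absolute value $\sqrt{q^n}$; since $\sum_P\lambda(\iota(P))$ is then a sum of $2g-2$ complex numbers of modulus $\sqrt{q^n}$, the bound follows (indeed with the sharper constant $2g-2$). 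Feeding this and the Hasse--Weil bound $\#C_0(\F_{q^n})\ge q^n+1-2g\sqrt{q^n}$ into the identity for $N_n$ gives
\[
    N_n \;\ge\; \frac{1}{\ell^{r_n}}\Bigl(q^n+1-2g\sqrt{q^n}-(\ell^{r_n}-1)\,2g\sqrt{q^n}\Bigr) \;=\; \frac{1}{\ell^{r_n}}\bigl(q^n+1-2g\ell^{r_n}\sqrt{q^n}\bigr).
\]
If $q^n>(2g\ell^{r_n})^2$ then $\sqrt{q^n}>2g\ell^{r_n}$, so the right-hand side is positive; as $N_n$ is a non-negative integer, $N_n\ge 1$, producing the desired point of $C_0(\F_{q^n})$.

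For the final assertion I would bound $r_n=v_\ell\bigl(\#J_0(\F_{q^n})\bigr)$. If $\alpha_1,\dots,\alpha_{2g}$ are the Frobenius eigenvalues of $J_0/\F_q$, then $\#J_0(\F_{q^n})=\prod_{i=1}^{2g}(1-\alpha_i^n)$, so $r_n=\sum_{i=1}^{2g}v_\ell(1-\alpha_i^n)$ for a fixed extension of $v_\ell$ to $\Qbar$. An eigenvalue with $v_\ell(\alpha_i)>0$ contributes $0$ for every $n$ (this can occur only when $\ell=\Char\F_q$), while for an eigenvalue with $v_\ell(\alpha_i)=0$ a lifting-the-exponent estimate gives $v_\ell(1-\alpha_i^n)\le v_\ell(n)+c_i$ with $c_i$ independent of $n$. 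Hence $r_n\le 2g\,v_\ell(n)+O(1)=O(\log n)$, so $\ell^{r_n}$ is bounded by a polynomial in $n$; since $q^n$ grows exponentially, $q^n>(2g\ell^{r_n})^2$ for all sufficiently large $n$.

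The main obstacle is the character-sum estimate: it requires passing from a character of $J_0(\F_{q^n})$ to an everywhere-unramified abelian cover via geometric class field theory and then invoking the Riemann hypothesis for the resulting curve, with care that nothing breaks when $\ell=\Char\F_q$. The Fourier inversion, the Hasse--Weil count of $\#C_0(\F_{q^n})$, and the lifting-the-exponent bound on $r_n$ are then routine.
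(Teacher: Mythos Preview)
Your proof is correct, and for the first assertion it takes a route genuinely different in presentation from the paper's. The paper argues geometrically: it produces a separable isogeny $\phi\colon A\to J_0$ of degree $\ell^{r}$ with $\phi(A(\F_q))=J_0(\F_q)[\ell^\perp]$ (one can obtain this by factoring the Lang isogeny $F-1\colon J_0\to J_0$ through $J_0/J_0(\F_q)[\ell^\perp]$), and then applies the Weil bound directly to the pulled-back curve $\phi^*(C_0)$, whose genus $g'$ satisfies $2g'-2=\ell^{r}(2g-2)$. Your character-sum argument is the Fourier-analytic unpacking of the same idea: the zeta function of $\phi^*(C_0)$ factors as the product over $\psi\in\widehat{G_n}$ of the $L$-functions you invoke, so the paper's single Weil bound on the cover and your $\ell^{r_n}-1$ individual character-sum bounds are equivalent. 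The paper's version is more concise; yours makes the mechanism explicit, avoids having to name the isogeny $\phi$, and even yields the sharper constant $2g-2$ in place of $2g$. For the second assertion the two arguments are essentially the same, just phrased from opposite ends: you bound $r_n\le O(\log n)$ via a lifting-the-exponent estimate, while the paper observes that the least $n$ with $\ell^r\mid\#J_0(\F_{q^n})$ grows exponentially in $r$.
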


		\begin{proof}
		For the first part of the theorem, it is enough to consider $n=1,r = r_1$ and assume that $q > (2g\ell^r)^2$.
		There exists an abelian variety $A/\F_q$ and a separable isogeny $\phi\colon A \to J_0$
		of degree $\ell^r$ for which $\phi(A(\F_q))$ is the subgroup of $J_0(\F_q)$
		of points of order prime to $\ell$. To prove the lemma it is enough
		to show that the curve $\phi^*(C_0)$ has an $\F_q$-rational point.
		The genus $g'$ of $\phi^*(C_0)$ satisfies $2g'-2 = \ell^r(2g-2)$ and the
		statement of the lemma now follows from the Weil bound applied to $\phi^*(C_0)$.

		For the second part, recall that there exist algebraic integers 
		$\al_1,\ldots,\al_{2g}$ with $|\al_i| = q^{1/2}$,
		the eigenvalues of Frobenius, such that $\# J_0(\F_{q^n}) = \prod (\al_i^n -1)$.
		As the $\al_i$ are not roots of unity, whenever they are $\ell$-adic units,
		the multiplicative order of $\al_i$ modulo
		$\ell^r$ grows exponentially with $r$.  Therefore the smallest $n$ such that
		$\# J_0(\F_{q^n})$ is divisible by $\ell^r$ grows exponentially with $r$
		and so satisfies $q^n > (2g\ell^r)^2$ with finitely many exceptions.
		\end{proof}

		\begin{Proposition}\label{prop:Brell} 
		Let $C_0/\F_q$ and $\ell$ be as as in Lemma~\ref{lem:finite-field},
		and $k/\F_q$ a function field in one variable such
		that for all  $n \leq 2\log_q(2g\ell^{r_n})$ (using the notation from Lemma~\ref{lem:finite-field}), $k$ has no places of degree $n$. Then the curve $C := C_0 \times_{\F_q} k$ satisfies $C(\A_k)^{\Br C[\ell^\infty]} \ne \emptyset$.
		\end{Proposition}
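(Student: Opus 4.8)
The plan is to write down an explicit adelic point of $C$ whose image in the Jacobian $J := J_0 \times_{\F_q} k$ is, at every place, a torsion point of order prime to $\ell$. Such a point is automatically $\ell^\infty$-divisible in $J(\A_k)$, so, taking the trivial rational point $Q_n = O$ for all $n$, Corollary~\ref{cor:d-div} (applied with $d = \ell$) places it in $C(\A_k)^{\Br C[\ell^\infty]}$, which is then nonempty. No finiteness of $\Sha$ is needed for this direction of Corollary~\ref{cor:d-div}.

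First I would set up a \emph{constant} embedding of $C_0$ into $J_0$. By the Weil bounds $C_0$ has a zero-cycle $z$ of degree $1$ defined over $\F_q$, so $\iota_0\colon C_0 \to J_0$, $x \mapsto [x-z]$, is defined over $\F_q$; let $\iota\colon C \to J$ be its base change to $k$, and use $\iota_0$ as the embedding when invoking Lemma~\ref{lem:finite-field} (its proof is insensitive to the choice of embedding). For each place $v$ of $k$ the completion $k_v$ is a Laurent series field over its residue field $\F_v = \F_{q^{\deg v}}$, so $C_0(\F_{q^{\deg v}}) \subset C(k_v)$ and $J_0(\F_{q^{\deg v}}) \subset J(k_v)$, the latter an injective homomorphism of groups; in particular the order of a point of $J_0(\F_v)$ is unchanged when viewed in $J(k_v)$.

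Next I would observe that the hypothesis on $k$ is exactly what makes Lemma~\ref{lem:finite-field} applicable at \emph{every} place. If $v$ is a place of $k$ of degree $n$, then $k$ has a place of degree $n$, so by hypothesis $n > 2\log_q(2g\ell^{r_n})$, i.e. $q^n > (2g\ell^{r_n})^2$; Lemma~\ref{lem:finite-field} then furnishes a point $P_v \in C_0(\F_{q^n}) = C(\F_v) \subset C(k_v)$ with $\iota_0(P_v) \in J_0(\F_{q^n})$ of order prime to $\ell$, hence of order prime to $\ell$ in $J(k_v)$. Assembling these gives $(P_v)_v \in C(\A_k) = \prod_v C(k_v)$ (the restricted product is the full product since $C$ is proper). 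For every $n \ge 1$ multiplication by $\ell^n$ is a bijection on the prime-to-$\ell$ torsion subgroup of $J(k_v)$, so $\iota(P_v) \in \ell^n J(k_v)$ for all $v$, whence $\iota((P_v)_v) \in \ell^n J(\A_k)$. With $Q_n = O \in J_0(\F_q) \subset J(k)$, Corollary~\ref{cor:d-div} yields $(P_v)_v \in C(\A_k)^{\Br C[\ell^\infty]}$.

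I expect the only genuine content — and thus the main thing to get right — to be the bookkeeping in the third step: recognizing that the stated degree condition on $k$ is precisely tuned so that every place falls in the range where Lemma~\ref{lem:finite-field} applies, and being careful that the local points are taken to be \emph{constant} (coming from $C_0(\F_v)$), so that ``order prime to $\ell$ in $J_0$'' genuinely upgrades to ``order prime to $\ell$ in $J(k_v)$'' rather than remaining a statement about reductions. The existence of an $\F_q$-rational degree-$1$ zero-cycle (needed to make $\iota$ constant) is the other small point, handled by the Weil bounds. Everything else is formal once Corollary~\ref{cor:d-div} is available.
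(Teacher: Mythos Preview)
Your proposal is correct and follows the same approach as the paper: use Lemma~\ref{lem:finite-field} at every place (the degree hypothesis on $k$ being exactly what guarantees its applicability) to produce local points of prime-to-$\ell$ order in $J$, then invoke Corollary~\ref{cor:d-div} with $Q_n = O$. Your write-up is more explicit than the paper's two-sentence proof---in particular you spell out the existence of an $\F_q$-rational degree-$1$ zero-cycle and the preservation of torsion order under $J_0(\F_v) \hookrightarrow J(k_v)$---but the argument is the same.
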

		\begin{Remark}
			{By Lemma~\ref{lem:finite-field}, there are only finitely many $n$ such that $n \leq 2\log_q(2g\ell^{r_n})$.}
		\end{Remark}
		\begin{proof}
		Lemma~\ref{lem:finite-field} provides, for every place $v$ of $k$, an infinitely $\ell$-divisible point in $C$ with 
		coordinates in the residue field of $v$ so, a fortiori, in the completion $k_v$. The adelic point 
		with these points as coordinates belong to $C(\A_k)^{\Br C[\ell^\infty]}$ by Corollary~\ref{cor:d-div}.
		\end{proof}

		\begin{proof}[Proof of Theorem~\ref{thm:Brell}]
			Consider the curve $C_0: y^2 = -x^6+x^2-1$ over $\F_3$ and let $k_m$ be the function field of the curve $D_m: (x^{3^m}-x+1)(y^{3^m}-y-1)=1$ over $\F_3$. It is easy to check that $D_m(\F_{3^m})$ is empty. So for $m$ sufficiently large, Proposition~\ref{prop:Brell} applies to give that the curve $C := C_0 \times_{\F_3} k_m$ over $k_m$ has $C(\A_{k_m})^{\Br C[\ell^\infty]} \ne \emptyset$. The Jacobian $J_0$ of $C_0$ is isogenous to the product of two elliptic curves, one ordinary and other supersingular. Indeed, there is an obvious morphism to the supersingular elliptic curve given by $y^2 = -x^3 + x - 1$. {The other factor is the curve $y^2 = -x^3 + x^2 - 1$, which is ordinary.} 
			 By results of \cite{Subrao} the Jacobian of $D_m$ is ordinary. It follows that there can be no surjective map $D_m \to C_0$. But there is no constant map either because $C_0(\F_3)= \emptyset$. Hence $C(k_m)$ is empty. Moreover, any map $D_m \to J_0$ lands in a coset of the ordinary factor of $J_0$. So, the $k_m$-rational points of $J = J_0 \times k$ are not Zariski dense and Theorem \ref{thm:constant} gives that $C(\A_k)^{\Br} = \emptyset$.			
		\end{proof}

\section{Poonen's Heuristic}
    
	We remind the reader of the Poonen-Scharaschkin set $C(\A_k)^{\PS}$ defined in Section~\ref{subsec:PS}.
    Poonen conjectured \cite[Conjecture 5.1]{Poonen} that a pointless curve $C$ of genus at least $2$ over a number field $k$ has $C(\A_k)^{\PS} = \emptyset$, or, equivalently, that there exist a finite set $S$ of finite places of good reduction such that the images of $J(k)$ and $\prod_{v\in S} C(\F_v)$ in $\prod_{v\in S} J(\F_v)$ do not intersect. By Scharaschkin's result (cf. Remark~\ref{rem:Scharaschkin}), Poonen's conjecture implies that the Brauer-Manin obstruction is the only obstruction to the existence of rational points on curves over number fields. As evidence for the conjecture Poonen gives an analysis of the probability that the intersection is nonempty assuming:
    \begin{enumerate}
        \item the numbers $\#J(\F_v)$ are at least as smooth as typical integers of the same size, and
        \item the subsets $\iota(C(\F_v)) \subset J(\F_v)$ are modeled on random subsets of the same size, independent of $v$.
    \end{enumerate}
	Specifically, Poonen makes the following conjecture and derives the following consequence.
	\begin{Conjecture}[{\cite[Conj. 7.1]{Poonen}}]\label{conj:smooth}
		Let $A$ be an abelian variety over a number field $k$ with $S_A$ the set of primes of good reduction for $A$, and let $u \in (0,1)$. Then
			\[
				\limsup_{B\to \infty}\frac{\# \{v\in S_A : \#\F_v \leq B \textup{ and }\#A(\F_v) \textup{ is }B^u\textup{ smooth}\}}{\#\{v\in S_A: \#\F_v \leq B\}} > 0.
			\]
	\end{Conjecture}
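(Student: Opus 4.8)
The plan is to peel the conjecture down to a statement about smooth values of a concrete integer sequence, to dispose of the cases where that sequence inherits multiplicative structure, and to isolate the generic case as the genuine obstruction. Throughout write $P^+(m)$ for the largest prime factor of $m$, so that ``$m$ is $B^u$-smooth'' means $P^+(m)\le B^u$, and recall Dickman's function $\rho$: a random integer of size $N$ is $N^{1/w}$-smooth with asymptotic probability $\rho(w)>0$. The first reduction is to the case that $A$ is simple: an isogeny alters $\#A(\F_v)$ by a factor bounded independently of $v$, and a product of smooth integers is smooth, so it is enough (modulo a mild point, genuine with the $\limsup$ formulation, about controlling the intersection of the resulting sets of $v$) to treat a simple abelian variety $A$ of dimension $g$. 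For $v$ of good reduction let $P_{A,v}(T)\in\Z[T]$ be the characteristic polynomial of the $\#\F_v$-power Frobenius; then $\#A(\F_v)=P_{A,v}(1)$, and the Weil bounds give $1\le\#A(\F_v)\le(1+\sqrt{\#\F_v})^{2g}\ll_g(\#\F_v)^g$. So the conjecture asserts that a positive $\limsup$-proportion of the integers $P_{A,v}(1)$, $\#\F_v\le B$, are $B^u$-smooth; since each such integer has size $\ll B^g$, its smoothness is the ``$\rho(g/u)$-event'' for a random integer of that size, so the content of the conjecture is that the sequence $\bigl(\#A(\F_v)\bigr)_v$ behaves like random integers with respect to smoothness.

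Second, I would settle the cases in which $P_{A,v}$ has exploitable multiplicative structure. The cleanest is when $A$ has complex multiplication by a CM field $K$ with $[K:\Q]=2g$: for $v$ totally split in $K$ the Frobenius is a product of Weil numbers $\pi_1,\dots,\pi_g$ together with their conjugates, so $\#A(\F_v)=\prod_{j=1}^g N_{K/\Q}(\pi_j-1)$ is, up to a bounded factor, a product of shifted norms of split primes of $K$. One is then reduced to the well-studied problem of smooth shifted primes, now over $K$: that a positive proportion of primes $\mathfrak{q}$ of $K$ with $N\mathfrak{q}\le B$ have $N(\pi_{\mathfrak{q}}-1)$ being $B^u$-smooth, which is amenable to the standard technique of sieving the $B^u$-smooth integers against the condition of being a shifted prime (fed by Bombieri--Vinogradov over $K$). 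This, together with the observation that Chebotarev applied to the mod-$n$ representation produces, for any highly smooth $n$, a positive-density set of $v$ with $n\mid\#A(\F_v)$ (so one may always peel off a guaranteed bounded smooth factor, though without lowering the size of the cofactor), handles the CM case, equivalently the case of abelian --- or solvable-with-CM --- image on torsion. A separate, ``soft'' regime is that of $u$ close to $1$: there ``$\#A(\F_v)$ not $B^u$-smooth'' forces $\#A(\F_v)$ to be close to prime, and the $v$ for which this happens can be bounded by a Selberg-type sieve upper bound (an input of the same flavor as the known partial results toward Koblitz's conjecture), leaving a strict minority of $v$.

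The main obstacle is the general, large-image case, which I expect to be genuinely hard. There $P_{A,v}(1)$ carries no usable structure, and one must detect smooth values head-on, by writing
\[
  \#\{\,v:\#\F_v\le B,\ \#A(\F_v)\text{ is }B^u\text{-smooth}\,\}\;=\;\sum_{\substack{m\ll B^g\\ P^+(m)\le B^u}}\#\{\,v:\#\F_v\le B,\ P_{A,v}(1)=m\,\}
\]
and sieving over the $B^u$-smooth $m$. This requires an equidistribution statement for $\#A(\F_v)$ in residue classes and short ranges, with a Bombieri--Vinogradov-quality error term and enough uniformity to sum over smooth $m$ --- equivalently, a usable ``level of distribution'' for the family of torsion fields $k(A[n])$. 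Even for $g=1$ this is far beyond Sato--Tate, and because the discriminant of $\Q(E[p])$ grows like $p^{4+o(1)}$ the relevant Chebotarev error terms swamp the main term once one sums past a short initial range of auxiliary primes $p$ --- and this remains so under GRH. My honest assessment is therefore that the reduction above, together with the CM and soft-regime cases, is the most one can cleanly do at present, and the conjecture is open precisely because this equidistribution input is missing; passing to $\limsup$ rather than $\liminf$ does not help, since any fixed algebraic structure one imposes leaves a non-smooth cofactor of size $\asymp B^g$, too large to be discarded.
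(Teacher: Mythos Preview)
The statement you were asked to prove is not a theorem in the paper but a \emph{conjecture}, explicitly attributed to Poonen and left open. The paper provides no proof; it uses the conjecture only as a hypothesis in Theorems~\ref{thm:Poonen} and~\ref{thm:primetodHeuristic}. So there is no ``paper's own proof'' to compare your proposal against.

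Your write-up is a reasonable survey of what one might try, and you yourself correctly conclude that the general case remains open for lack of a suitable level-of-distribution result for the sequence $\#A(\F_v)$. That honest assessment is the right one. But the proposal is not a proof, and a few of the intermediate claims are looser than they should be even as a sketch: the reduction to simple $A$ via products requires the smooth sets for the factors to overlap on a set of positive upper density, which you flag but do not justify; and the ``soft regime'' argument for $u$ close to $1$ would need a genuine upper bound on the number of $v$ with $\#A(\F_v)$ nearly prime, which is itself nontrivial for $g>1$. In short: no gap to name in the paper, because the paper makes no claim here; your document is a discussion, not a proof, and should be labeled as such.
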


	\begin{Theorem}[{\cite[Thm. 7.2]{Poonen}}]\label{thm:Poonen}
		Let $C$ be a curve of genus at least $2$ over a number field $k$ with Jacobian $J$ and let $S_C$ be the set of primes of good reductions for $C$. Assume Conjecture~\ref{conj:smooth} holds for $J$. 	For each prime $v\in S_C$, let $\calC_v$ be a random subset of $J(\F_v)$ of size $\#C(\F_v)$; assume that the choices for different $v$ are independent.  Then with probability $1$, there exists a finite subset $S\subset S_C$ such that the images of $J(k)$ and $\prod_{v\in S} \calC_v$ in $\prod_{v\in S} J(\F_v)$ do not intersect.
	\end{Theorem}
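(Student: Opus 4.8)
The plan is to combine a first-moment (union bound) estimate with an independence argument. Fix $u \in (0,1)$ for which Conjecture~\ref{conj:smooth} is being used, write $r = \operatorname{rank} J(k)$, and for a finite set $S$ of primes of good reduction for $C$ let $\Gamma_S \subseteq \prod_{v\in S}J(\F_v)$ be the image of $J(k)$ under the product of reduction maps; this is a finite group. We want to rule out, for suitable $S$, the event $E_S := \{\Gamma_S \cap \prod_{v\in S}\calC_v \ne \emptyset\}$. It is enough to produce infinitely many \emph{pairwise disjoint} such $S$ with $\Pr[E_S] \le 1/2$: the families $(\calC_v)_{v\in S}$ for disjoint $S$ are independent, hence so are the events $E_S$, and therefore $\Pr[\,E_S \text{ holds for all of them}\,] = \prod 1/2 = 0$. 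So with probability $1$ at least one of these $S$ has $\Gamma_S \cap \prod_{v\in S}\calC_v = \emptyset$, which (since $\prod_{v\in S}\calC_v$ is its own image in $\prod_{v\in S}J(\F_v)$) is exactly the assertion.

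For a fixed $S$, the union bound over $P \in \Gamma_S$ together with the independence and uniformity of the $\calC_v$ gives
\[
  \Pr[E_S] \;\le\; \sum_{P \in \Gamma_S}\Pr\left[P \in \prod_{v\in S}\calC_v\right] \;=\; \#\Gamma_S\cdot\prod_{v\in S}\frac{\#C(\F_v)}{\#J(\F_v)}\,.
\]
The problem thus reduces to producing infinitely many pairwise disjoint $S$ making the right-hand side $\le 1/2$, and I would obtain the two factors from opposite ends. The product is made tiny by using only primes with large residue field: by the Weil bounds $\#C(\F_v)/\#J(\F_v) \le c_g\,(\#\F_v)^{1-g}$, and $g \ge 2$, so if every $v \in S$ has $\#\F_v > \sqrt{B}$ then $\prod_{v \in S}\#C(\F_v)/\#J(\F_v) \le B^{-(g-1)\#S/4}$ for $B$ large. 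It then remains to keep $\#\Gamma_S$ under control, and this is the step I expect to be the crux.

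The key point is that $\Gamma_S$ is a quotient of $J(k) \cong \Z^r \oplus J(k)_{\mathrm{tors}}$, hence is generated by at most $R := r + 2g$ elements (the torsion subgroup needs $\le 2g$ generators because it embeds into $J(\Kbar)_{\mathrm{tors}}$). So for every prime $\ell$ the $\ell$-primary part $\Gamma_S[\ell^\infty]$ is generated by $\le R$ elements and its exponent divides $\operatorname{lcm}_{v\in S}\exp\big(J(\F_v)[\ell^\infty]\big) \le \max_{v\in S}\#J(\F_v)$, whence $\#\Gamma_S[\ell^\infty] \le \big(\max_{v\in S}\#J(\F_v)\big)^R$. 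Now I would invoke Conjecture~\ref{conj:smooth}: if in addition every $v \in S$ has $\#\F_v \le B$ and $\#J(\F_v)$ a $B^u$-smooth integer, then $\#\Gamma_S$ is $B^u$-smooth, so only the $\pi(B^u)$ primes $\ell \le B^u$ contribute, and $\log\#\Gamma_S \le R\,\pi(B^u)\log\big(\max_{v\in S}\#J(\F_v)\big) \le R(g+1)\,\pi(B^u)\log B = O_{g,r,u}(B^u)$, since $\pi(B^u) = O(B^u/\log B)$. Combining with the previous paragraph, $\#\Gamma_S\cdot\prod_{v\in S}\#C(\F_v)/\#J(\F_v) \le 1/2$ as soon as $\#S \gg_{g,r,u} B^u/\log B$.

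Finally I would check there are enough admissible primes and arrange disjointness. Let $2\delta > 0$ be the $\limsup$ in Conjecture~\ref{conj:smooth} for $J$; then for infinitely many $B$ a proportion $> \delta$ of the primes $v$ with $\#\F_v \le B$ (of which there are $\sim B/\log B$ by the prime ideal theorem, all but finitely many of good reduction for $C$) have $\#J(\F_v)$ a $B^u$-smooth integer. Discarding the $O(\sqrt B/\log B)$ primes with $\#\F_v \le \sqrt B$ leaves at least $\tfrac{\delta}{2}B/\log B$ primes that are admissible (large norm, good reduction, smooth $\#J(\F_v)$), and since $u < 1$ this exceeds the required $O(B^u/\log B)$ once $B$ is large. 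So for each such large $B$ we may pick $S = S_B$ of the needed size with $\Pr[E_{S_B}] \le 1/2$. Now choose $B_1 < B_2 < \cdots$ among the $B$ with the density property above and satisfying $B_{n+1} > B_n^2$ (possible since there are infinitely many such $B$); then the primes used in $S_{B_n}$ all have norm in $(\sqrt{B_n},B_n]$, so the $S_{B_n}$ are pairwise disjoint, and the independence argument of the first paragraph finishes the proof.
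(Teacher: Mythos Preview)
The paper does not give its own proof of this theorem: it is stated as a citation of Poonen's result, and the argument is only summarized abstractly inside the proof of Theorem~\ref{thm:primetodHeuristic} (as a black box taking a finitely generated $G$, groups $G_v$ of size $(\#\F_v)^{g+o(1)}$ with smooth order, and random subsets $C_v$ of size $O(\#\F_v)$). Your proposal is a correct fleshing-out of exactly that argument---first-moment bound $\Pr[E_S]\le \#\Gamma_S\prod_v \#C(\F_v)/\#J(\F_v)$, control of $\#\Gamma_S$ via smoothness and bounded generator number, then independence over disjoint packets of primes---so it matches the approach the paper attributes to Poonen.

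One small cosmetic point: your justification that $J(k)_{\mathrm{tors}}$ needs at most $2g$ generators ``because it embeds into $J(\Kbar)_{\mathrm{tors}}$'' is slightly glib, since the latter is not finitely generated; what you are really using is that for each prime $\ell$ the $\ell$-primary part embeds into $J[\ell^\infty]\cong(\Q_\ell/\Z_\ell)^{2g}$, hence has $\ell$-rank at most $2g$, and the minimal number of generators of a finite abelian group is the maximum over $\ell$ of its $\ell$-rank. Everything else (the Weil bounds giving $\#C(\F_v)/\#J(\F_v)\le c_g(\#\F_v)^{1-g}$, the count $\log\#\Gamma_S = O_{g,r,u}(B^u)$ via $\pi(B^u)$ many primes each contributing a factor $\le(\max_v\#J(\F_v))^R$, the choice $B_{n+1}>B_n^2$ to force disjointness of the norm intervals $(\sqrt{B_n},B_n]$) is fine.
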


	In this section we show that the same assumptions and analysis lead to the expectation that a pointless curve $C$ over a number field of genus at least $2$ has $C(\A_k)^{\PS[d^\perp]} = \emptyset$, for any integer $d$ (See Theorem~\ref{thm:primetodHeuristic} below).  However, the curve $C/\Q$ produced by Theorem~\ref{thm:Brodd} provides a counterexample to this statement. Indeed, it has $C(\Q) = C(\A_\Q)^{\Br}= \emptyset$, but  $C(\A_\Q)^{\Br C[2^\perp]}\ne \emptyset$. Proposition~\ref{prop:PS} then implies that $C(\A_\Q)^{\PS[2^\perp]}\ne \emptyset$.
	
	It is fairly easy to see where the heuristic goes astray in this example. The curve has the property that $\iota(C(\A_\Q))$ contains elements of $J(\A_\Q)[2]$, implying that the subsets $\iota(C(\F_v)) \subset J(\F_v)$ all contain $2$-torsion points of $J(\F_v)$. This illustrates an important point that these subsets cannot be modeled by random subsets of the appropriate size even when $C$ has no rational points.

	\begin{Theorem}\label{thm:primetodHeuristic}
		Let $C$ be a curve of genus at least $2$ over a number field $k$ with Jacobian $J$, let $S_C$ be the set of primes of good reductions for $C$,and let $d \ge 1$ be an integer. Assume Conjecture~\ref{conj:smooth} holds for $J$. For each prime $v\in S_C$, let $\calC_v$ be a random subset of $J(\F_v)[d^\perp]$ of size $\#C(\F_v)$; assume that the choices for different $v$ are independent.  Then with probability $1$, there exists a finite subset $S\subset S_C$ such that the images of $J(k)$ and $\prod_{v\in S} \calC_v$ in $\prod_{v\in S} J(\F_v)[d^\perp]$ do not intersect.
	\end{Theorem}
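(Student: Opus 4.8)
The plan is to adapt Poonen's proof of Theorem~\ref{thm:Poonen} to the prime-to-$d$ setting, showing that the only essential change needed is to replace $J(\F_v)$ by its prime-to-$d$ quotient $J(\F_v)[d^\perp]$ throughout. First I would recall the structure of Poonen's argument: one shows that, assuming Conjecture~\ref{conj:smooth}, one can find a set $S$ of places for which the expected size of the intersection of the (random) images of $\prod_{v\in S}\calC_v$ with the image of $J(k)$ goes to $0$, and then a Borel--Cantelli type argument upgrades this to a probability-one statement. The key quantitative input is that the image of $J(k)$ in $\prod_{v\in S} J(\F_v)$ has size roughly $\#\overline{J(k)}/\prod_{v\in S}[\text{index}]$, while a random subset of size $\#C(\F_v)\approx \#\F_v$ inside a group of size $\approx\#\F_v$ has probability $\approx \#C(\F_v)/\#J(\F_v)$ of hitting any fixed point; multiplying these over a cleverly chosen $S$ (using smoothness of $\#J(\F_v)$ to find many places whose contributions are controlled by a single small prime) makes the product summable.

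Next I would observe what changes when we pass to $J(\F_v)[d^\perp]$. Writing $\#J(\F_v) = n_v^{(d)}\cdot n_v^{(d^\perp)}$ where $n_v^{(d)} = \#J(\F_v)[d^\infty]$ and $n_v^{(d^\perp)} = \#J(\F_v)[d^\perp]$, the relevant probability that a random subset $\calC_v\subset J(\F_v)[d^\perp]$ of size $\#C(\F_v)$ contains the image of a fixed point of $\prod_{v\in S}\pr_{d^\perp}(J(k))$ is now $\approx \#C(\F_v)/n_v^{(d^\perp)}$, which is \emph{larger} than before by a factor of $n_v^{(d)}$. However, $n_v^{(d)}$ is bounded: for $v$ of good reduction, $J(\F_v)[d^\infty]$ injects into $J[d^\infty](\Kbar)\cong(\Q_d/\Z_d)^{2g}$-type torsion only up to bounded level once $\#\F_v$ is large relative to $d$ — more precisely, $n_v^{(d)} \le \#J[d^e](\kbar)$ is bounded uniformly as long as we restrict to a single prime $\ell\mid d$ at a time, and in any case $n_v^{(d)} = O(1)$ uniformly in $v$ is false, but $n_v^{(d)} \le (\text{const})\cdot(\#\F_v)^{o(1)}$ holds and, crucially, the smoothness hypothesis controls it: if $\#J(\F_v)$ is $B^u$-smooth then so is $n_v^{(d)}$, and we may simply discard from $S$ those few places where $n_v^{(d)}$ is large. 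Concretely, I would show that for all but a density-zero set of $v$, the factor $n_v^{(d)}$ is absorbed into the error terms, so Poonen's estimate $\sum_{\text{choices of }S}\prod_{v\in S}(\#C(\F_v)/n_v^{(d^\perp)}) \to 0$ still holds.

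Then I would carry out the Borel--Cantelli step verbatim: for each $N$, Poonen constructs (under Conjecture~\ref{conj:smooth}) a finite set $S_N$ with $\prod_{v\in S_N}(\#\calC_v/\#J(\F_v)[d^\perp]) \cdot \#\pr_{d^\perp}(\overline{J(k)})$ summable in $N$, hence with probability one only finitely many $S_N$ fail to separate $\pr_{d^\perp}(J(k))$ from $\prod_{v\in S_N}\calC_v$; any one of the successful $S_N$ is the desired $S$. The one subtlety to check is that $\pr_{d^\perp}(\overline{J(k)})$, the closure of the prime-to-$d$ image of $J(k)$, still has the "expected" size $\prod_{v}(\text{local index factors})$ — this follows because $\overline{J(k)}$ in $\prod_v J(\F_v)$ is an open subgroup (its image being the profinite completion of a finitely generated group, as in Remark~\ref{rem:Akbar}) and $\pr_{d^\perp}$ is a continuous surjection of profinite groups, so $\pr_{d^\perp}(\overline{J(k)})$ is again open of the appropriate index.

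The main obstacle I expect is \emph{not} conceptual but bookkeeping: one must verify that the unbounded-looking factor $n_v^{(d)} = \#J(\F_v)[d^\infty]$ genuinely does not spoil Poonen's convergence estimate. This requires either a uniform bound $n_v^{(d)} = O((\log\#\F_v)^{C})$ (which one can extract from the fact that $J(\F_v)[d^\infty]$ is killed by $d^{e_v}$ with $e_v = O(\log\#\F_v)$ and is a subgroup of $(\Z/d^{e_v})^{2g}$, combined with counting), or — cleaner — restricting attention in the construction of $S$ to places where $\gcd(\#J(\F_v),d^\infty)$ is small, which the smoothness hypothesis in Conjecture~\ref{conj:smooth} supplies in abundance since a positive density of the relevant $v$ have $\#J(\F_v)$ free of large prime power factors. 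Once that point is settled, every other step is a transcription of \cite[Thm.~7.2]{Poonen} with $J(\F_v)$ replaced by $J(\F_v)[d^\perp]$, so I would present the proof as such a transcription, flagging only the modified probability estimate and the openness of $\pr_{d^\perp}(\overline{J(k)})$.
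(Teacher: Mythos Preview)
Your overall strategy matches the paper's --- reduce to Poonen's abstract argument with $G_v = J(\F_v)[d^\perp]$ and verify $\#G_v = (\#\F_v)^{g+o(1)}$ on a positive-density set of places --- and you correctly identify that the only real issue is bounding $n_v^{(d)} = \#J(\F_v)[d^\infty]$. But neither of your proposed bounds works. The uniform estimates $n_v^{(d)} = O((\#\F_v)^{o(1)})$ and $n_v^{(d)} = O((\log \#\F_v)^C)$ are simply false: for any $n$ there are places $v$ split in $k(J[\ell^n])$, so that $J[\ell^n] \subset J(\F_v)$ and $n_v^{(d)} \ge \ell^{2gn}$, and such $v$ exist with $\#\F_v$ only polynomial in $\ell^n$; thus $n_v^{(d)}$ can be a genuine positive power of $\#\F_v$. (Your ``$J(\F_v)[d^\infty] \subset (\Z/d^{e_v})^{2g}$ with $e_v = O(\log \#\F_v)$'' argument yields only $n_v^{(d)} = O((\#\F_v)^{2g^2})$, not a polylogarithmic bound.) And Conjecture~\ref{conj:smooth} does not help either: $B^u$-smooth means no large prime \emph{factors}, not no large prime-\emph{power} factors --- a power of $2$ is as smooth as possible --- so smoothness of $\#J(\F_v)$ says nothing about its $d$-primary part.

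The paper fills this gap with a separate lemma: for each prime $\ell \mid d$, the density of places $v$ at which $J(k_v)$ (equivalently $J(\F_v)$ for $v\nmid \ell$) has a point of order $\ell^n$ tends to $0$ as $n \to \infty$. Hence $n_v^{(d)}$ is bounded on a set of places of density arbitrarily close to $1$, and intersecting with the positive-density set supplied by Conjecture~\ref{conj:smooth} gives a positive-density set on which all of Poonen's hypotheses hold for $G_v = J(\F_v)[d^\perp]$. This lemma is not automatic: by Chebotarev the density in question equals the proportion of elements in the image of $\Gal(\kbar/k)$ acting on $J[\ell^n]$ that have $1$ as an eigenvalue, and to show this proportion tends to $0$ one invokes Serre's result that the image contains the scalars $(\Z/\ell^n)^{\times m}$ for some $m$ independent of $n$. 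That is the ingredient missing from your plan. (Your discussion of the openness of $\pr_{d^\perp}(\overline{J(k)})$ is also unnecessary; Poonen's abstract argument only needs that $J(k)$ is finitely generated.)
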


	\begin{proof}
	Our proof of this theorem follows Poonen's argument proving Theorem~\ref{thm:Poonen}. Let $G$ be a finitely generated abelian group, $S$ a positive density set of finite places of $k$, $\{G_v\}_{v\in S}$ a collection of finite abelian groups with homomorphisms $G \to G_v$, and $\{C_v\}_{v\in S}$ a collection of random finite subsets where the choices for different $v$ are independent.  Poonen's argument shows that if $\#C_v = O(\#\F_v)$, $\#G_v = (\#\F_v)^{g+ o(1)}$ for some real number $g>1$ that is independent of $v$, and each $\#G_v$ is $b$-smooth where $b = \sqrt{\#\F_v}$, then with probability $1$ there exists a finite subset $S'\subset S$ such that the images of $G$ and $\prod_{v\in S'} C_v$ in $\prod_{v\in S'} G_v$ do not intersect. 

    We will apply this with $G = J(k)$ and $G_v = J(\F_v)[d^\perp]$. Conjecture~\ref{conj:smooth} implies that there is a positive density set $S_1$ of primes such that $J(\F_v)[d^\perp]$ is $b$-smooth for $b = \sqrt{\#\F_v}$. By the Weil conjectures $\#J(\F_v)[d^\perp] \cdot \#J(\F_v)[d^\infty]  = \#J(\F_v)= (\#\F_v)^{g+o(1)}$. Let $S_N$ denote the set of places such that $\#J(\F_v)[d^\infty] \ge N$. These sets have, by Chebotarev's density theorem, natural densities which we claim tend to $0$ as $N$ tends to $\infty$. It follows that there are sets of primes of density arbitrarily close to $1$ on which $J(\F_v)[d^\infty]$ is bounded. Hence there is a positive density set of places $S_2 \subset S_1$ such that $\#J(\F_v)[d^\perp] = (\#\F_v)^{g + o(1)}$ as required. Thus the proof of the claim follows from the following lemma, after observing that for each integer $d$ and each $v \nmid d$, the reduction map gives a surjective map $J(k_v)[d^\infty] \to J(\F_v)[d^\infty]$.
	\end{proof}

    \begin{Lemma}
        Let $A$ be an abelian variety over a number field $k$. The density of the set of places $v$ of $k$ such that $A(k_v)$ has a point of order $\ell^n$ tends to $0$ as $n$ tends to $\infty$.
    \end{Lemma}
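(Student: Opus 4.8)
I want to show that $\delta_n := \operatorname{dens}\{v : A(k_v) \text{ has a point of order } \ell^n\} \to 0$. The strategy is to translate the existence of an $\ell^n$-torsion point in $A(k_v)$ into a condition on Frobenius acting on the $\ell$-adic (or mod $\ell^n$) Tate module, and then apply the Chebotarev density theorem in the field $k(A[\ell^n])$.

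First I would fix $n$ and work with the finite Galois extension $K_n := k(A[\ell^n])$ of $k$, with Galois group $G_n := \operatorname{Gal}(K_n/k) \hookrightarrow \operatorname{GL}_{2g}(\Z/\ell^n\Z)$ where $g = \dim A$. For a place $v$ of $k$ of good reduction (away from $\ell$ and the finitely many bad places, which have density zero and can be ignored), the group $A(k_v)[\ell^n]$ is identified with the fixed points $A[\ell^n]^{\operatorname{Frob}_v}$, i.e., with $\ker(\operatorname{Frob}_v - 1)$ acting on $(\Z/\ell^n\Z)^{2g}$. So $A(k_v)$ has a point of order exactly $\ell^n$ if and only if the conjugacy class of $\operatorname{Frob}_v$ in $G_n$ lands in the subset
\[
    \Sigma_n := \{\sigma \in G_n : \sigma \text{ fixes a point of order } \ell^n \text{ in } (\Z/\ell^n\Z)^{2g}\}.
\]
By Chebotarev, $\delta_n = \#\Sigma_n / \#G_n$ (this is really a $\limsup$, but Chebotarev gives an honest density). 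Thus it suffices to show $\#\Sigma_n/\#G_n \to 0$.

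The key observation is a compatibility between the levels: reduction mod $\ell^{n-1}$ gives a surjection $G_n \twoheadrightarrow G_{n-1}$ (since $K_{n-1} \subset K_n$), and if $\sigma \in G_n$ fixes a point of order $\ell^n$, then its image $\bar\sigma \in G_{n-1}$ fixes the ($\ell^{n-1}$-torsion) multiple of that point, which has order $\ell^{n-1}$; hence $\Sigma_n$ maps into $\Sigma_{n-1}$. So $\delta_n$ is built from a ``decreasing'' tower of conditions. To get genuine decay I would pass to the $\ell$-adic picture: let $H := \overline{\rho}(G_k) \subset \operatorname{GL}_{2g}(\Z_\ell)$ be the image of the $\ell$-adic Galois representation, a compact $\ell$-adic Lie group, with Haar probability measure $\mu$. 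Then $\delta_n = \mu(\widetilde{\Sigma}_n)$ where $\widetilde{\Sigma}_n \subset H$ is the (closed, hence measurable) set of $h$ whose reduction mod $\ell^n$ fixes a point of order $\ell^n$; equivalently, $\widetilde{\Sigma}_n = \{h \in H : \det(h - 1) \equiv 0 \text{ in a way forcing an order-}\ell^n\text{ fixed vector}\}$, and crucially the $\widetilde{\Sigma}_n$ are nested decreasing with $\bigcap_n \widetilde{\Sigma}_n = \{h \in H : h \text{ fixes a nonzero vector of } \Q_\ell^{2g}, \text{ i.e., } 1 \text{ is an eigenvalue}\}$. By continuity of measure from above, $\delta_n = \mu(\widetilde{\Sigma}_n) \to \mu\bigl(\{h \in H : \det(h-1) = 0\}\bigr)$. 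Now $h \mapsto \det(h - 1)$ is a nonzero analytic function on $H$ (nonzero because, e.g., $1 \in H$ has $\det(1-1)=0$ but nearby elements, or at least \emph{some} element of $H$ such as a suitable power of a Frobenius with no eigenvalue $1$, gives a nonzero value — here one uses that $A(k)$ need not be relevant but that the image $H$ is large enough that $\det(h-1)$ is not identically zero; concretely $\det(h-1) = \prod(\lambda_i - 1)$ and one can exhibit $h$ with all $\lambda_i \ne 1$), so its zero locus has Haar measure zero. Hence $\delta_n \to 0$.

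\textbf{The main obstacle.} The delicate point is justifying that the limiting set $\{h \in H : \det(h-1) = 0\}$ has measure zero, i.e., that $\det(h-1)$ is not identically zero on the $\ell$-adic group $H$. This is where one needs genuine input: if $A$ has a subvariety of torsion points defined over $k$, or more to the point if $1$ were always an eigenvalue of every $h \in H$, the claim would fail. The cleanest way around this is to note it suffices to find \emph{a single} place $v$ (equivalently a single $h = \operatorname{Frob}_v \in H$) with $A(k_v)[\ell^\infty]$ finite — equivalently, such that $1$ is not an eigenvalue of $\operatorname{Frob}_v$ on $V_\ell A$; such $v$ exist because the characteristic polynomial of $\operatorname{Frob}_v$ is an integer polynomial whose roots have absolute value $(\#\F_v)^{1/2} \ne 1$, so $1$ is never actually a root for $v$ of good reduction. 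Therefore $\det(h-1)$ is \emph{never} zero on the dense set of Frobenius elements, so it is identically nonzero, its zero set is a proper analytic subset, and the argument closes. An entirely elementary alternative, avoiding $\ell$-adic Lie groups: work directly with $G_n$, use that $\#A(k_v)[\ell^n] = \#\ker(\operatorname{Frob}_v - 1 \bmod \ell^n)$ is, for $v$ of good reduction, a divisor of $\#A(\F_v) = \prod(\alpha_i - 1)$ where $|\alpha_i| = (\#\F_v)^{1/2}$; one then bounds $\delta_n$ by the proportion of $v$ for which $\ell^n \mid \#A(\F_v)$, and shows this proportion $\to 0$ by the same Chebotarev-plus-eigenvalue analysis (the characteristic polynomial of Frobenius mod $\ell^n$ is constrained, and having $\ell^n \mid \det(\operatorname{Frob}_v - 1)$ is a codimension-growing condition on the conjugacy class). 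I would present the $\ell$-adic version as the main line and remark on the elementary variant.
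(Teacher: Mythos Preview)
Your argument is correct and takes a genuinely different route from the paper's. Both proofs reduce via Chebotarev to bounding the ratio $\#\Sigma_n/\#G_n$, but the paper then invokes a result of Serre that the $\ell$-adic image of Galois contains an open subgroup of the homotheties: there is an integer $m$, independent of $n$, with $(\Z/\ell^n\Z)^{\times m} \subset G_n$ as scalar matrices. The bound is then immediate and explicit: the map $(\Z/\ell^n\Z)^{\times m} \times G_{n,1} \to G_n$ sending $(\lambda,M)$ to $\lambda M$ has fibres of size at most $2\dim A$ (since $\lambda$ must then be an eigenvalue of $\lambda M$), giving $\#G_{n,1}/\#G_n \le 2\dim A\,/\,\#(\Z/\ell^n\Z)^{\times m} \to 0$.

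Your route instead trades Serre's homothety theorem for the Weil bounds (Frobenius eigenvalues on $V_\ell A$ have archimedean absolute value $\sqrt{\#\F_v}$, so $1$ is never an eigenvalue) together with soft measure theory on the compact $\ell$-adic Lie group $H$. This is more conceptual and would adapt readily to other algebraic conditions on Frobenius; the paper's argument is more elementary, avoids $\ell$-adic analytic machinery, and yields the explicit rate $\delta_n = O(\ell^{-n})$. Two small points worth tightening in your write-up: the identification $\bigcap_n \widetilde{\Sigma}_n = \{h\in H : \det(h-1)=0\}$ needs a compactness argument (pass to $\PP^{2g-1}(\Z_\ell)$ and intersect the nested nonempty closed sets of primitive vectors annihilated by $h-1$ modulo $\ell^n$); and for ``zero locus of a nonzero analytic function has Haar measure zero'' you should note that density of Frobenii in $H$ forces $\det(h-1)$ to be non-identically-zero on \emph{every} coordinate chart, which is what the local statement requires.
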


    \begin{proof}
        Let $G_n \subset \operatorname{GSp}_{2\dim(A)}(\Z/\ell^n)$ denote the image of the representation of $\Gal(k)$ on $A[\ell^n]$ in some fixed basis, and let $G_{n,1}\subset G_n$ denote the subset of matrices with $1$ as an eigenvalue. The density of the set of places such that $A(k_v)$ has a point of order $\ell^n$ is equal to $\#G_{n,1}/\#G_n$ by Chebotarev's density theorem. By a result of Serre in \cite{Serre79} (see also \cite[Corollary 2.1.7]{McQuillan}), there is an integer $m$, depending on $A$ but independent of $n$, such that $G_n$ contains contains $(\Z/\ell^n)^{\times m}$ as a subgroup of scalar matrices. Therefore we have a map of sets $(\Z/\ell^n)^{\times m}\times G_{n,1} \to G_n$ sending $(\lambda,M)$ to $\lambda M$. If $N = \lambda M$ is in the image of this map, then $\lambda$ is an eigenvalue of $N$. Hence the preimage of $N$ has size at most $2\dim(A)$, which is independent of $n$. On the other hand, $\#(\Z/\ell^n)^{\times m}$ is unbounded as $n$ goes to infinity, so $\#G_{n,1}/\#G_n \to 0$ as $n \to \infty$.
    \end{proof}


\begin{bibdiv}

\begin{biblist}

\bib{BruinStoll}{article}{
   author={Bruin, Nils},
   author={Stoll, Michael},
   title={Two-cover descent on hyperelliptic curves},
   journal={Math. Comp.},
   volume={78},
   date={2009},
   number={268},
   pages={2347--2370},
   issn={0025-5718},
}

\bib{Cesnavicius}{article}{
   author={\v Cesnavi\v cius, K{\k e}stutis},
   title={Poitou-Tate without restrictions on the order},
   journal={Math. Res. Lett.},
   volume={22},
   date={2015},
   number={6},
   pages={1621--1666},
   issn={1073-2780},
}

\bib{CreutzANTSX}{article}{
   author={Creutz, Brendan},
   title={Explicit descent in the Picard group of a cyclic cover of the projective line},
   book={
     title={Algorithmic number theory: Proceedings of the 10th Biennial International Symposium (ANTS-X) held in San Diego, July 9--13, 2012},
     series={Open Book Series},
     volume={1},
     publisher={Mathematical Science Publishers},
     editor={Everett W. Howe},	
     editor={Kiran S. Kedlaya}			    
   },
  date={2013},
  pages={295--315}
}
		
\bib{CV17}{article}{
	label={CV17},
	author={Brendan Creutz},
	author={Bianca Viray},
	title={Degree and the Brauer-Manin obstruction (with an appendix by Alexei N. Skorobogatov)},
	note={(preprint)},
	eprint={arXiv:1703.02187v2},
	date={2017}
}

            \bib{Corn07}{article}{
               author={Corn, Patrick},
               title={The Brauer-Manin obstruction on del Pezzo surfaces of degree 2},
               journal={Proc. Lond. Math. Soc. (3)},
               volume={95},
               date={2007},
               number={3},
               pages={735--777},
               issn={0024-6115},
            }

        \bib{Illusie1979}{article}{
           author = {Illusie, Luc},
           title = {Complexe de de\thinspace {R}ham-{W}itt et cohomologie
              cristalline},
           journal = {Ann. Sci. \'Ecole Norm. Sup. (4)},
           volume = {12},
           year = {1979},
           number = {4},
           pages = {501--661},
           issn = {0012-9593},
	}	

\bib{Koblitz}{article}{
   author={Koblitz, Neal},
   title={Primality of the number of points on an elliptic curve over a
   finite field},
   journal={Pacific J. Math.},
   volume={131},
   date={1988},
   number={1},
   pages={157--165},
   issn={0030-8730},
}

\bib{LMFDB}{misc}{
  label = {LMFDB},
  author       = {The {LMFDB Collaboration}},
  title        =  {The L-functions and Modular Forms Database},
  howpublished = {\url{http://www.lmfdb.org}},
  year         = {2017},
  note         = {[Online; accessed 16 October 2017]},
}

            \bib{Manin1971}{article}{
               author={Manin, Yuri I.},
               title={Le groupe de Brauer-Grothendieck en g\'eom\'etrie diophantienne},
               conference={
                  title={Actes du Congr\`es International des Math\'ematiciens},
                  address={Nice},
                  date={1970},
               },
               book={
                  publisher={Gauthier-Villars, Paris},
               },
               date={1971},
               pages={401--411},
            }

         \bib{Milne1975}{article}{
            author={Milne, James S.},
            title={On a conjecture of {A}rtin and {T}ate},
            journal={Annals of Mathematics. Second Series},
            volume={102},
            date={1975},
            number={3},
            pages={517--533},
            issn={0003-486X},
	}	
	
	\bib{Milne}{book}{
		author={Milne, James S.},
		title={\'Etale Cohomology},
		year={1980},
		Publisher={Princeton Univ. Press},
		location={Princeton NJ}
	}

	\bib{MilneADT}{book}{
author={Milne, James S.},
title={Arithmetic Duality Theorems},
year={2006},
publisher={BookSurge, LLC},
edition={Second edition},
pages={viii+339},
isbn={1-4196-4274-X}
	}

\bib{McQuillan}{article}{
   author={McQuillan, Michael},
   title={Division points on semi-abelian varieties},
   journal={Invent. Math.},
   volume={120},
   date={1995},
   number={1},
   pages={143--159},
   issn={0020-9910},
}

        	\bib{Nakahara}{article}{
        		author={Nakahara, Masahiro},
        		title={Brauer classes that never obstruct the Hasse principle for genus $1$ fibrations},
        		note={preprint},
        		date={2017},
			eprint={arXiv:1706.07019v2}
        	}

\bib{CON}{book}{
   author={Neukirch, J\"urgen},
   author={Schmidt, Alexander},
   author={Wingberg, Kay},
   title={Cohomology of number fields},
   series={Grundlehren der Mathematischen Wissenschaften [Fundamental
   Principles of Mathematical Sciences]},
   volume={323},
   edition={2},
   publisher={Springer-Verlag, Berlin},
   date={2008},
   pages={xvi+825},
   isbn={978-3-540-37888-4},
}

\bib{Poonen}{article}{
   author={Poonen, Bjorn},
   title={Heuristics for the Brauer-Manin obstruction for curves},
   journal={Experiment. Math.},
   volume={15},
   date={2006},
   number={4},
   pages={415--420},
   issn={1058-6458}
}

\bib{PoonenVoloch}{article}{
   author={Poonen, Bjorn},
   author={Voloch, Jos\'e Felipe},
   title={The Brauer-Manin obstruction for subvarieties of abelian varieties
   over function fields},
   journal={Ann. of Math. (2)},
   volume={171},
   date={2010},
   number={1},
   pages={511--532},
   issn={0003-486X},
}

\bib{Royden}{book}{
   author={Royden, H. L.},
   title={Real analysis},
   edition={3},
   publisher={Macmillan Publishing Company, New York},
   date={1988},
   pages={xx+444},
   isbn={0-02-404151-3},
}

\bib{Scharaschkin}{book}{
  author={Scharaschkin, Victor},
  title={Local-global problems and the Brauer-Manin obstruction},
  date={1999},
  note={Ph.D.\ thesis, University of Michigan},
}

\bib{Serre72}{article}{
   author={Serre, Jean-Pierre},
   title={Propri\'et\'es galoisiennes des points d'ordre fini des courbes
   elliptiques},
   language={French},
   journal={Invent. Math.},
   volume={15},
   date={1972},
   number={4},
   pages={259--331},
   issn={0020-9910},
}

	\bib{Serre79}{article}{
	  author={Serre, Jean-Pierre},
	  title={Quelques propri\'et\'es des groupes alg\'ebriques commutatifs},
	  journal={Ast\'erisque},
	  date={1979},
	  pages={69-70}
	}

            \bib{TorsorsAndRationalPoints}{book}{
               author={Skorobogatov, Alexei N.},
               title={Torsors and rational points},
               series={Cambridge Tracts in Mathematics},
               volume={144},
               publisher={Cambridge University Press, Cambridge},
               date={2001},
               pages={viii+187},
               isbn={0-521-80237-7},
            }		

            \bib{SZ16}{article}{
            	author={Skorobogatov, Alexei N.},
         	author={Zarhin, Yuri G.},
   	        title={Kummer varieties and their Brauer groups},
             	eprint={arXiv:1612.05993},
                note={24 pages, preprint},
		date={2016}
        }

\bib{Stoll07}{article}{
   author={Stoll, Michael},
   title={Finite descent obstructions and rational points on curves},
   journal={Algebra Number Theory},
   volume={1},
   date={2007},
   number={4},
   pages={349--391},
   issn={1937-0652},
}

\bib{Subrao}{article}{
     author = {Subrao, Dor\'e},
     title = {The {$p$}-rank of {A}rtin-{S}chreier curves},
     journal = {Manuscripta Math.},
     volume = {16},
     year = {1975},
     number = {2},
     pages = {169--193},
}

            \bib{SD-BrauerCubic}{article}{
               author={Swinnerton-Dyer, Peter},
               title={The Brauer group of cubic surfaces},
               journal={Math. Proc. Cambridge Philos. Soc.},
               volume={113},
               date={1993},
               number={3},
               pages={449--460},
               issn={0305-0041},
            }

	\bib{Tate-BSD}{article}{
	author={Tate, John},
	title={On the conjectures of Birch and Swinnerton-Dyer
and a geometric analog},
	note={S\'eminaire Bourbaki. Vol. 9, Exp. No. 306},
	pages={415--440},
        publisher={Soc. Math. de France, Paris},
	date={1964},
        }

\end{biblist}
\end{bibdiv}


\end{document}